\definecolor{darkred}{rgb}{0.5,0,0}
\definecolor{darkgreen}{rgb}{0,0.35,0}
\definecolor{darkblue}{rgb}{0,0,0.55}
\setlist[enumerate]{
  label = {(\roman*)},
  ref = {(\roman*)}}
\newtheoremstyle{thmstyle}%style name
  {\medskipamount}%space before
  {\smallskipamount}%space after
  {\slshape}%font used
  {0pt}%indentation
  {\bfseries}%modifier theorem head
  {.}%punctuation between theorem head and body
  { }%space after punctuation
  {\thmname{#1}\thmnumber{ #2}{\normalfont\thmnote{ (#3)}}}%theorem specifier
\newtheoremstyle{plainstyle}%style name
  {\medskipamount}%space before
  {\smallskipamount}%space after
  {\rmfamily}%font used
  {0pt}%indentation
  {\bfseries}%modifier theorem head
  {.}%punctuation between theorem head and body
  { }%space after punctuation
  {\thmname{#1}\thmnumber{ #2}{\normalfont\thmnote{ (#3)}}}%theorem specifier
\theoremstyle{thmstyle}
\newtheorem{theorem}{Theorem}[section]
\newtheorem{conjecture}[theorem]{Conjecture}
\newtheorem{corollary}[theorem]{Corollary}
\newtheorem{proposition}[theorem]{Proposition}
\newtheorem{fact}[theorem]{Fact}
\newtheorem{lemma}[theorem]{Lemma}
\newtheorem{claim}[theorem]{Claim}
\theoremstyle{plainstyle}
\newtheorem{definition}[theorem]{Definition}
\newtheorem{remark}[theorem]{Remark}
\newtheorem*{remark*}{Remark}
\newtheorem{notation}[theorem]{Notation}
\newcommand{\NN}{\mathbb{N}}
\newcommand{\RR}{\mathbb{R}}
\newcommand{\cB}{\mathcal{B}}
\newcommand{\cC}{\mathcal{C}}
\newcommand{\cM}{\mathcal{M}}
\newcommand{\cT}{\mathcal{T}}
\newcommand{\cU}{\mathcal{U}}
\newcommand{\cX}{\mathcal{X}}
\newcommand{\floor}[1]{\left\lfloor #1\right\rfloor}
\newcommand{\ceil}[1]{\left\lceil #1\right\rceil}
\DeclareMathOperator{\Cay}{Cay}
\DeclareMathOperator{\id}{id}
\DeclareMathOperator{\htt}{ht}
\DeclareMathOperator{\sgn}{sgn}
\DeclareMathOperator{\OPT}{OPT}
\DeclareMathOperator{\tr}{tr}
\begin{document}

\title{Tighter Bounds on the Independence Number\\ of the Birkhoff Graph}

\author{%
  Leonardo Nagami Coregliano\thanks{University of Chicago, lenacore@uchicago.edu}
  \and Fernando Granha Jeronimo\thanks{University of Chicago, granha@uchicago.edu. Supported in part by NSF grant CCF-1816372.}%
}

\date{\today}

\maketitle

\begin{abstract}
    The Birkhoff graph $\mathcal{B}_n$ is the Cayley graph of the
    symmetric group $S_n$, where two permutations are adjacent if they
    differ by a single cycle. Our main result is a tighter upper bound
    on the independence number $\alpha(\mathcal{B}_n)$ of
    $\mathcal{B}_n$, namely, we show that $\alpha(\mathcal{B}_n) \le
    O(n!/1.97^n)$ improving on the previous known bound of
    $\alpha(\mathcal{B}_n) \le O(n!/\sqrt{2}^{\; n})$ by
    [Kane--Lovett--Rao, FOCS~2017]. Our approach combines a
    higher-order version of their representation theoretic techniques
    with linear programming. With an explicit construction, we also
    improve their lower bound on $\alpha(\mathcal{B}_n)$ by a factor
    of $n/2$. This construction is based on a proper coloring of
    $\mathcal{B}_n$, which also gives an upper bound on the chromatic
    number $\chi(\mathcal{B}_n)$ of $\mathcal{B}_n$. Via known
    connections, the upper bound on $\alpha(\mathcal{B}_n)$ implies
    alphabet size lower bounds for a family of maximally recoverable
    codes on grid-like topologies.
\end{abstract}

\thispagestyle{empty}

\newpage
\pagenumbering{roman}
\tableofcontents
\clearpage

\pagenumbering{arabic}
\setcounter{page}{1}

\section{Introduction}\label{sec:intro}

A celebrated theorem of Birkhoff~\cite{Birkhoff46} characterizes the set of doubly stochastic matrices as
forming a convex polytope whose extreme points are permutation matrices. More precisely, if $M$ is an
$n$-by-$n$ matrix over the non-negative reals whose rows and column each sum to 1 (i.e., doubly stochastic),
then $M$ can be expressed as a convex combination of permutation matrices (i.e., matrices with exactly one
entry $1$ in each row and column and all other entries $0$). It is well known (see e.g.~\cite[Section~2]{BA96}
and~\cite[Section~II.5]{barvinok2002course}) that the skeleton of this polytope, called the Birkhoff graph
$\cB_n$, is the Cayley graph whose vertex set is the symmetric group $S_n$ and two permutations $\sigma$ and
$\tau$ are adjacent if and only if they differ by a single cycle, that is, $\sigma\tau^{-1}$ is a cycle. For
more properties of the Birkhoff graph and polytope we refer the reader to~\cite{Pak00,CM09}.

Recently, connections between the Birkhoff graph and coding theory, more specifically, the theory of maximally
recoverable codes~\cite{GHJY14}, were pointed out by Kane, Lovett and~Rao~\cite{lovett17}, who showed that the
alphabet size of a family of maximally recoverable codes on a grid-like topology (more precisely $T_{n\times
  n}(1,1,1)$ of~\cite{GHKSWY17}) is at least the chromatic number $\chi(\cB_n)$ of the Birkhoff graph, which
in turn, by the trivial bound, is at least $n!/\alpha(\cB_n)$, where $\alpha(\cB_n)$ is the independence
number of the Birkhoff graph\footnote{In~\cite[Claim~I.5]{lovett17}, this bound is presented directly in terms
  of the independence number $\alpha(\cB_n)$, but their proof in fact works for the chromatic number.}. Thus,
upper bounds for the independence number $\alpha(\cB_n)$ translate to lower bounds on the size of the alphabet
needed for such codes.

A well-known spectral technique to bound the independence number of a graph is the Hoffman
bound~\cite{Hoffman69}, which uses only the largest and the smallest eigenvalues of the graph. However, Kane,
Lovett and Rao~\cite{lovett17} observed that such spectral technique cannot yield a bound better than
$\alpha(\cB_n) \le O((n-1)!)$. This was their motivation to use stronger techniques based on representation
theory, which can be seen as a generalization of spectral theory. Using these techniques, they obtained the
following upper bound on $\alpha(\cB_n)$.

\begin{theorem}[\protect{\cite[Theorem~I.8]{lovett17}}]
  For every $n\in\NN_+$, we have
  \begin{align*}
    \alpha(\cB_n) & \leq \frac{n!}{\sqrt{2}^{\;n-4}}.
  \end{align*}
\end{theorem}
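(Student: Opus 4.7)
The plan is to use the representation theory of $S_n$ to diagonalise the adjacency matrix of $\cB_n$ and feed the resulting spectrum into an eigenvalue-based independence number bound. Since the connection set $C$ of $\cB_n$ is conjugation-invariant---being the union of the conjugacy classes $C_k$ of $k$-cycles for $k = 2, \ldots, n$---Schur's lemma diagonalises the adjacency matrix $A$ along the isotypic decomposition $\CC[S_n] = \bigoplus_{\lambda \vdash n} V_\lambda$, with $A$ acting on $V_\lambda$ as the scalar
\begin{align*}
    \mu_\lambda \;=\; \frac{1}{d_\lambda} \sum_{k=2}^n \lvert C_k\rvert \, \chi_\lambda(c_k),
\end{align*}
where $d_\lambda$ and $\chi_\lambda$ are the dimension and character of the irreducible $S_n$-representation indexed by $\lambda$, and $c_k \in C_k$ is any fixed $k$-cycle. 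The first subtask is to evaluate $\chi_\lambda(c_k)$ via the Murnaghan--Nakayama rule, which for a $k$-cycle reduces to a signed sum over the removable $k$-rim hooks of $\lambda$; notably $\chi_\lambda(c_n) = (-1)^j$ when $\lambda = (n-j, 1^j)$ is a hook and vanishes otherwise, foreshadowing that hook and near-hook partitions will drive the extremal eigenvalues.

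Next, since the pure Hoffman ratio bound---as the authors note---yields only $O((n-1)!)$, I would replace it by a linear-programming refinement. For an independent set $I$, let $f_\lambda = \Pi_\lambda \mathbbm{1}_I$ denote the orthogonal projection of the indicator onto the $\lambda$-isotypic component. One then has the constraints $\sum_\lambda \lVert f_\lambda \rVert^2 = \lvert I \rvert$ (Parseval), $\lVert f_{(n)} \rVert^2 = \lvert I \rvert^2 / n!$ (trivial component), and $\sum_\lambda \mu_\lambda \lVert f_\lambda \rVert^2 = 0$ (independence), together with cross-isotypic inequalities coming from $\mathbbm{1}_I^2 = \mathbbm{1}_I$. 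These define an LP whose optimum upper-bounds $\lvert I \rvert / n!$; I would produce a dual feasible solution concentrated on balanced hooks $(n-j, 1^j)$ with $j \approx n/2$, where $d_\lambda$ is close to the central binomial coefficient $\binom{n-1}{\lfloor n/2\rfloor}$ while the hook character values $\chi_\lambda(c_k)$ remain of bounded magnitude, so that the LP forces the indicator of $I$ to have disproportionately small mass on the hook isotypics.

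The principal obstacle is extracting the $\sqrt{2}^{\,n}$ factor quantitatively. Since $\sqrt{2^n}$ matches the square root of the maximal hook dimension, I expect the improvement over Hoffman to come from a Cauchy--Schwarz step that trades one power of $d_\lambda$ against the $\{0,1\}$-valued constraint on $\mathbbm{1}_I$, yielding roughly $\lvert I \rvert / n! \lesssim 1/\sqrt{d_{\text{central hook}}}$. Making this precise requires a careful accounting of the Murnaghan--Nakayama signs for hook partitions---they alternate with $k$ in a controlled way---and a verification that the cancellations do not destroy the would-be $\sqrt{2}^{\,n}$ gap. The residual constant $\sqrt{2}^{\,4}$ in the stated bound $n!/\sqrt{2}^{\,n-4}$ presumably absorbs small-$n$ boundary contributions and the $\mathrm{poly}(n)$ overhead in Stirling's approximation of the central binomial coefficient, rather than reflecting any deeper phenomenon.
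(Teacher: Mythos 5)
There is a genuine gap, and it is the central one: your outline is, at bottom, a Delsarte/Hoffman-type linear program in the quantities $\lVert f_\lambda\rVert^2$, and the paper explicitly observes (crediting Kane--Lovett--Rao) that any such purely spectral relaxation of $\cB_n$ bottoms out at $\alpha(\cB_n)\le O((n-1)!)$. The constraints you list---Parseval, the trivial-component identity, the independence constraint $\sum_\lambda \mu_\lambda\lVert f_\lambda\rVert^2=0$, and nonnegativity---\emph{are} the Hoffman ratio bound once you take the dual. The additional ``cross-isotypic inequalities coming from $\mathbbm{1}_I^2=\mathbbm{1}_I$'' that you gesture at are quadratic constraints (they define an SDP, not an LP), you do not say what their linearization looks like, and crucially they are not where KLR get their extra leverage. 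Without a new source of constraints, your LP cannot see the difference between $n!/n$ and $n!/\sqrt{2}^{\,n}$.

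What actually powers the KLR bound, and what your plan is missing, is a \emph{structure versus pseudorandomness dichotomy} combined with a density increment. Briefly: one introduces the class function $\phi_A$ (here $\chi^\lambda(\phi_A)$ plays the role of your $\lVert f_\lambda\rVert^2$, up to normalization), and if $A$ is $(k,r)$-pseudorandom---no $k$-tuple is mapped to any other $k$-tuple by more than an $r/(n)_k$ fraction of $A$---then the Young module trace bound and Young's rule give a genuine new family of linear constraints $\sum_\lambda K_{\lambda,h^n_k}\,\chi^\lambda(\phi_A)\le r$. These constraints are not spectral; they depend on a combinatorial hypothesis about $A$ that is \emph{not automatic}. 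If $A$ fails to be pseudorandom at some scale $k$, one restricts to the subset of $A$ that witnesses the failure, conjugates, and projects down to $S_{n-k}$, gaining a multiplicative density increment of $r$; this cannot happen more than $O(n)$ times, so one eventually lands in the pseudorandom case. It is this Young-module/pseudorandomness constraint, fed into the edge-counting Parseval identity, that produces the bound---and it is applied not to the full adjacency matrix but to the single generator class of $n$-cycles (KLR) or $(n-\ell)$-cycles (this paper), i.e.\ to one $\psi_\ell$ at a time, not to $\sum_k \mu_\lambda$.

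Your heuristic for where $\sqrt{2}^{\,n}$ comes from is also aimed at the wrong place. It does not arise from $1/\sqrt{d_{\text{central hook}}}$ via a Cauchy--Schwarz trade. Rather, one parametrizes pseudorandomness by a rate $c$ (so $r=c^k$), and $c$ is forced to satisfy two competing requirements: the density increment must not overshoot density $1$ (so $c$ controls how much of $n$ can be ``spent''), while the tail of the hook-sum $\sum_k c^{2k}/\binom{2k}{k}$ must be small enough to leave a positive $n$-cycle edge count. Balancing these with only $n$-cycles counted gives $c=\sqrt{2}$; the present paper pushes $c$ toward $2$ precisely by counting more cycle lengths $n-\ell$. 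So the exponential base is the optimal pseudorandomness rate in a trade-off, not a dimension square root. To rescue your outline you would need to (i) replace the full-spectrum $\mu_\lambda$ with a single-cycle-length Parseval identity, (ii) import the pseudorandomness hypothesis and the Young-module constraints it licenses, and (iii) supply the structure/density-increment argument that justifies assuming pseudorandomness; as written, all three are absent.
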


On the other side, the best lower bound known is given via an explicit construction of an independent set and
yields the following result\footnote{Their construction yields an independent set whose size is exactly the
  product in~\eqref{eq:KLRconstruction}, but only the rightmost bound is stated in~\cite{lovett17}.}.

\begin{theorem}[\protect{\cite[Theorem~I.7]{lovett17}}]
  For every $n\in\NN_+$ that is a power of $2$, we have
  \begin{align}\label{eq:KLRconstruction}
    \alpha(\cB_n)
    & \geq
    \prod_{i=1}^{\log_2(n)-1} 2^i!
    \geq
    \frac{n!}{4^n}.
  \end{align}
\end{theorem}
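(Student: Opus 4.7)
The plan is to build $I_n \subseteq S_n$ by recursion on $k = \log_2 n$, using the embedding $S_{n/2}\times S_{n/2} \hookrightarrow S_n$ as the subgroup preserving the bipartition of $\{1,\ldots,n\}$ into its first and second halves (with the two halves identified via $i\mapsto i+n/2$). I would set $I_2 = \{\id\}$ and, for $n = 2^k$ with $k\geq 2$, define
\[
I_n = \bigl\{(\rho\pi,\,\pi) : \rho \in I_{n/2},\ \pi \in S_{n/2}\bigr\}.
\]
The pair $(\rho,\pi)$ is uniquely recovered from $(\rho\pi,\pi)$, which immediately gives $|I_n| = |I_{n/2}|\cdot(n/2)!$, unrolling to $\prod_{i=1}^{\log_2(n)-1} 2^i!$.

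The main task is to verify that $I_n$ is independent in $\cB_n$. For distinct $\sigma = (\rho\pi,\pi)$ and $\sigma' = (\rho'\pi',\pi')$ in $I_n$, I would compute
\[
\sigma(\sigma')^{-1} = \bigl(\rho\pi(\pi')^{-1}(\rho')^{-1},\ \pi(\pi')^{-1}\bigr) \in S_{n/2}\times S_{n/2}.
\]
Since the two components act on disjoint halves of $\{1,\ldots,n\}$, the element $\sigma(\sigma')^{-1}$ can be a single cycle in $S_n$ only if exactly one component is the identity and the other is a single cycle in $S_{n/2}$. I would split into these two cases. \emph{Case (i):} if the second component is trivial, then $\pi=\pi'$ and the first collapses to $\rho(\rho')^{-1}$, which cannot be a cycle because $I_{n/2}$ is independent. \emph{Case (ii):} if the first component is trivial, then $\rho^{-1}\rho' = \pi(\pi')^{-1}$ must itself be a single cycle; but $\rho(\rho')^{-1}$ is the inverse of the $\rho$-conjugate of $\rho^{-1}\rho'$, hence has the same cycle type, so $\rho(\rho')^{-1}$ would also be a single cycle, again contradicting the independence of $I_{n/2}$ (or else forcing $\rho=\rho'$ and $\sigma=\sigma'$).

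For the weaker bound $|I_n|\geq n!/4^n$ I would induct on $k$: using the central binomial estimate $\binom{n}{n/2}\leq 2^n = 4^{n/2}$,
\[
|I_n| = |I_{n/2}|\cdot(n/2)! \geq \frac{((n/2)!)^2}{4^{n/2}} = \frac{n!}{4^{n/2}\binom{n}{n/2}} \geq \frac{n!}{4^n}.
\]
I expect the main subtlety to be case (ii) of the independence argument: although $\rho^{-1}\rho'$ and $\rho(\rho')^{-1}$ are generically distinct elements of $S_{n/2}$, they share the same cycle type, and it is precisely this observation that licenses the reduction to the inductive hypothesis on $\cB_{n/2}$.
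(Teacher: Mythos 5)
Your proof is correct, and it follows essentially the same recursion as the paper's Lemma~\ref{lemma:indep_set_basic}: both exploit the embedding $S_{n/2}\times S_{n/2}\hookrightarrow S_n$ and the observation that a single cycle in the product subgroup must be supported on one factor only. The only difference is a cosmetic reparametrization of the pair (the paper twists the second coordinate as $(\sigma,\sigma^{-1}\tau)$, you twist the first as $(\rho\pi,\pi)$), which changes whether one needs a conjugation or an inversion in the case analysis but not the substance of the argument.
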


In this paper, we prove tighter asymptotic bounds on the independence number $\alpha(\cB_n)$ of $\cB_n$. Our
main result is the following.

\begin{restatable}{theorem}{TheoMainIndUB}\label{theo:main_ub_alpha}
  We have
  \begin{align*}
    \alpha(\cB_n) & \leq O\left(\frac{n!}{1.97^n} \right).
  \end{align*}
\end{restatable}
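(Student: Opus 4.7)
The plan is to upper-bound $\alpha(\cB_n)$ via a linear program whose constraints come from a richer set of representation-theoretic identities than the single Hoffman-style inequality used in \cite{lovett17}. Given an independent set $I \subseteq S_n$ in $\cB_n$, decompose its indicator function $\mathbbm{1}_I \in \CC[S_n]$ into isotypic components $\mathbbm{1}_I = \sum_{\lambda \vdash n} f_\lambda$ under the right-regular representation, one piece per partition $\lambda$ of $n$. Since the adjacency operator $A$ of $\cB_n$ commutes with this action, it acts on the $\lambda$-isotype as multiplication by the scalar $\mu_\lambda = \sum_{k=2}^{n} \binom{n}{k}(k-1)!\,\chi^\lambda(c_k)/d_\lambda$, which is computable via the Murnaghan--Nakayama rule. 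Setting $y_\lambda = \|f_\lambda\|^2$, the $\lambda=(n)$ component is pinned down by $|I|$, Parseval relates the remaining $y_\lambda$ to $|I|$, and the independence condition $\mathbbm{1}_I^{\top} A\,\mathbbm{1}_I = 0$ gives the single linear equation $\sum_\lambda \mu_\lambda\, y_\lambda = 0$, all subject to $y_\lambda \geq 0$. This is essentially the setup underlying the $\sqrt{2}^{-n}$ bound.

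To go beyond $\sqrt{2}^{-n}$, I would impose additional linear constraints on $(y_\lambda)_\lambda$ from a \emph{higher-order} extension of the argument: besides $A$, test $\mathbbm{1}_I$ against other operators in the Bose--Mesner algebra of the $S_n$ conjugacy-class association scheme --- adjacency operators of auxiliary Cayley graphs supported on larger conjugacy classes (products of disjoint cycles, double-cycles, etc.), or polynomials in $A$ evaluated at $I$. Each such operator contributes a further sign-constrained linear relation on the $y_\lambda$, provided one can show that the corresponding inner product $\mathbbm{1}_I^{\top} B\,\mathbbm{1}_I$ is forced to have a definite sign by $\cB_n$-independence (for instance, because pairs $(\sigma,\tau) \in I^2$ whose quotient $\sigma\tau^{-1}$ has a specified cycle type admit a combinatorial upper or lower bound). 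Collecting these inequalities, together with nonnegativity of the $y_\lambda$, defines a linear program whose optimum upper bounds $|I|$, and from whose value the $1.97^{-n}$ rate must emerge.

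The crux of the proof is twofold. First, one needs sufficiently tight estimates --- or closed-form expressions --- for the character ratios $\chi^\lambda(c)/d_\lambda$ across a carefully chosen family of partitions $\lambda$ (likely close to staircase or hook shapes) and for the higher-order conjugacy classes $c$ that enter the LP; this should be accessible via Murnaghan--Nakayama combined with content-polynomial identities or Roichman-type asymptotic bounds. Second, and this is where I expect the main difficulty to lie, one must exhibit an explicit feasible dual solution --- a weighting of the higher-order constraints --- that certifies the $1.97^{-n}$ rate, since a naive weighting only reproduces the $\sqrt{2}^{-n}$ bound and strictly improving the exponential base requires a delicate simultaneous balance among several Young diagrams and cycle classes. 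Once such a dual witness is constructed, the conclusion $\alpha(\cB_n) \leq O(n!/1.97^n)$ follows by weak LP duality.
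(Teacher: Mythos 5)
Your LP-over-irreps framework and the intuition that one should test $\mathbbm{1}_I$ against more of the Bose--Mesner algebra is in the right spirit, and the variables $y_\lambda$ you introduce do correspond (up to normalization) to the quantities $\chi^\lambda(\phi_A)$ that the paper works with. However, there is a genuine and central gap: \textbf{the entire structure-vs-randomness dichotomy and density increment are missing}, and without them the constraints you propose are not available.

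Concretely, your LP in the first paragraph only has Parseval, nonnegativity, and the single relation $\sum_\lambda \mu_\lambda y_\lambda = 0$. The paper explicitly observes that this purely spectral/Hoffman-type constraint cannot beat $\alpha(\cB_n)\le O((n-1)!)$; it does \emph{not} already give $\sqrt{2}^{-n}$, contrary to your framing. What actually produces KLR's $\sqrt{2}^{-n}$ bound (and the present paper's $1.97^{-n}$) is a pseudorandomness hypothesis on the set $A$: if $A$ is $(k,r)$-pseudorandom then $\tr(M^{h^n_k}(\phi_A))\le r$, which by Young's rule becomes the additional LP constraints $\sum_\lambda K_{\lambda,h^n_k}\,x_\lambda\le c^k$. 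These are \emph{not} sign constraints coming from other conjugacy-class operators; they are a completely different family of inequalities, derived from the structure of Young modules, and they are what bound the $x_\lambda$ below anything useful. Your proposed replacement --- inequalities of the form $\mathbbm{1}_I^\top B\,\mathbbm{1}_I\ge 0$ for $B$ supported on multi-cycle classes --- gives only trivial nonnegativity; $\cB_n$-independence says nothing about pairs whose quotient has a non-cycle type, and no ``combinatorial upper or lower bound'' on such pairs follows from independence alone. You flag this as a conditional step, but it is precisely the step that fails.

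The second half of the dichotomy is equally essential. The Young-module constraints hold only under pseudorandomness, and a generic independent set need not be pseudorandom. The paper handles this by a density increment: if $A$ is not $(k,c^k)$-pseudorandom, there is a restriction of $A$ to a smaller $S_{n'}$ that is denser by a factor $c^k$ and contains no more edges, so by induction one eventually lands on a pseudorandom set in some $S_m$ with $m=\Omega(n)$, at which point the LP takes over. Without this reduction the LP argument applies to nothing. Finally, even with all of this in place the route to $1.97$ is not a clean dual certificate but the computational solution of an explicit LP (after truncating to partitions of bounded ``belly'' and leg, stabilizing coefficients as $n\to\infty$, and applying exact rational simplex); the paper's Linear Programs I--III and \cref{theo:lp_main} set up exactly the sequence of reductions that make that computation meaningful.
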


Our method consists of a generalization of KLR's representation theoretic approach combined with the solution
of a particular linear programming problem. As discussed above, this also
improves the lower bound for the size of the alphabet of a maximally recoverable code in the $T_{n\times
  n}(1,1,1)$ topology of~\cite{GHKSWY17} from $\Omega(\sqrt{2}^{\; n})$ to $\Omega(1.97^n)$.

On the other side, we improve KLR's construction of an independent set by a factor of $n/2$ when $n$ is a
power of $2$, and extend KLR's result for every $n$, which yields the following result.

\begin{restatable}{theorem}{TheoMainIndepConst}\label{theo:indep_set_construct}
  For every $n\in\NN_+$, we have
  \begin{align*}
    \alpha(\cB_n)
    & \geq
    \prod_{i=1}^{\floor{\log_2(n)}}\floor{\frac{n}{2^i}}!
    \geq
    \frac{n!}{4^n}\cdot 2^{\Theta((\log(n))^2)}.
  \end{align*}

  If $n$ is a power of $2$, then we can improve the bound above to
  \begin{align*}
    \alpha(\cB_n)
    & \geq
    \frac{n}{2}\prod_{i=1}^{\log_2(n)-1} 2^i!
  \end{align*}
\end{restatable}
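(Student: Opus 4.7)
The plan is to construct, by induction on $n$, a large independent set in $\cB_n$ via a block-diagonal recursive construction, and to obtain the additional factor $n/2$ for $n$ a power of $2$ by augmenting each step with a block-swap coset whose independence is verified by a parity argument.

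For the general bound, I would partition $[n] = A \sqcup B$ with $|A| = \lfloor n/2 \rfloor$, fix an injection $\phi : A \to B$, and let $J \subseteq S_A$ be a recursively-constructed independent set in $\cB_{|A|}$. Define
\[
  I_n := \bigl\{\sigma \in S_n :\ \sigma(A) = A,\ \sigma|_{\phi(A)} = \phi\,\rho\,\sigma|_A\,\phi^{-1} \text{ for some } \rho \in J,\ \sigma \text{ fixes } B \setminus \phi(A)\bigr\}.
\]
Here $\sigma|_A$ is arbitrary in $S_A$ and $\sigma|_{\phi(A)}$ is the ``diagonal copy'' of $\sigma|_A$ through $\phi$ twisted by some $\rho \in J$, so $|I_n| = \lfloor n/2 \rfloor! \cdot |J|$, iterating to $\prod_{i=1}^{\lfloor \log_2 n\rfloor}\lfloor n/2^i \rfloor!$. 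Independence is a case analysis on whether $\sigma|_A = \sigma'|_A$: in the affirmative case, $\sigma\sigma'^{-1}$ acts as a non-identity element of $J \cdot J^{-1}$ on $\phi(A)$ and trivially elsewhere, hence (by induction) has at least two non-trivial cycles; otherwise the $A$- and $\phi(A)$-restrictions of $\sigma\sigma'^{-1}$ are either both non-trivial (giving $\geq 2$ non-trivial cycles) or the $\phi(A)$-restriction is trivial, forcing the $A$-restriction to equal a non-identity element of $J \cdot J^{-1}$ (again giving $\geq 2$ non-trivial cycles). The asymptotic estimate $\prod \lfloor n/2^i \rfloor! \geq n!/4^n \cdot 2^{\Theta((\log n)^2)}$ follows from Stirling.

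For the improved bound when $n = 2^k$, I take $|A| = |B| = m = n/2$ and $\phi : A \to B$ a bijection, and introduce the block-swap involution $\tau_n$ defined by $\tau_n(a) = \phi(a)$ and $\tau_n(\phi(a)) = a$ for $a \in A$. Setting
\[
  I_n' := I_n \cup \tau_n I_n
\]
yields a disjoint union of size $2|I_n|$ (since $I_n$ preserves the partition $\{A, B\}$ while $\tau_n I_n$ swaps it). Applying the recursion with $J = I_{n/2}'$ gives $F(2) = 1$ and $F(n) = 2\,(n/2)!\,F(n/2)$ for $n \geq 4$, which iterates to $F(n) = (n/2) \prod_{i=1}^{\log_2(n) - 1} 2^i!$, matching the claim. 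At $n = 4$, the construction recovers the Klein four-group directly.

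The main step, and the only substantive obstacle, is verifying that $I_n'$ is still an independent set. The clean argument is parity: I show by induction that $I_n' \subseteq A_n$, the alternating group. For $\sigma \in I_n$, $\sgn(\sigma) = \sgn(\sigma|_A) \cdot \sgn(\phi\rho\sigma|_A\phi^{-1}) = \sgn(\sigma|_A)^2 \cdot \sgn(\rho) = \sgn(\rho) = +1$, using the inductive hypothesis $\rho \in I_{n/2}' \subseteq A_{n/2}$. The block-swap $\tau_n$, being a product of $2^{k-1}$ transpositions (with $2^{k-1}$ even for $k \geq 2$), has sign $+1$. Hence $I_n' \subseteq A_n$. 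On the other hand, a single $n$-cycle has sign $(-1)^{n-1} = -1$ for $n$ even, so it lies in $S_n \setminus A_n$. The resulting parity mismatch immediately rules out $\sigma \tau_n \sigma'^{-1}$ (for $\sigma, \sigma' \in I_n$) being a single $n$-cycle, completing the induction and the proof.
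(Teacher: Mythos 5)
Your construction is essentially the paper's: \cref{lemma:indep_set_basic} builds the independent set as
\[
  \bigl\{(\sigma,\ (\sigma')^{-1}\tau) : \sigma \in S_{\lfloor n/2 \rfloor},\ \tau \in A\bigr\} \subseteq S_{\lfloor n/2\rfloor} \times S_{\lceil n/2\rceil},
\]
the same block-diagonal twist as your $I_n$ (the twist merely acts on the other factor and the recursion runs on $\lceil n/2 \rceil$ rather than $\lfloor n/2 \rfloor$, a harmless variation), and \cref{lemma:indep_set_improved} augments with the block-swap $\gamma = \prod_{i=1}^{n/2}(i,\, n/2+i)$, exactly your $\tau_n$. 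Your case analysis for the basic recursive step and the inductive parity observation that $I_n' \subseteq A_n$ are both correct.

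There is, however, a genuine gap in the last step of your argument that $I_n'$ is independent. You show that every element of $I_n'$ is even, that a full $n$-cycle is odd (since $n$ is even), and conclude that the cross-coset product $\sigma\tau_n(\sigma')^{-1}$ is not a single $n$-cycle. But to prove independence in $\cB_n$ you must exclude single cycles of \emph{every} length $2 \le k \le n$, and a cycle of odd length $k$ is an \emph{even} permutation, so parity alone does not rule those out. The missing observation --- made explicitly in the paper's proof of \cref{lemma:indep_set_improved} --- is that $\tau_n$ exchanges the two halves $[n/2]$ and $[n]\setminus[n/2]$ while every element of $I_n$ preserves them; consequently $\sigma\tau_n(\sigma')^{-1}$ has no fixed point, and since any cycle of length strictly less than $n$ fixes some point of $[n]$, the product, if it is a single cycle at all, is forced to be a full $n$-cycle. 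Only then does your parity contradiction apply. With this one additional sentence the proof is complete and coincides with the paper's.
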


In fact, we can construct a proper coloring of the Birkhoff graph such that each of the color classes is an
independent set achieving the bounds above.

\begin{restatable}{theorem}{TheoMainChromNumb}\label{theo:chrom_ub}
  Let $n$ be a positive integer. Then there is an explicit proper coloring establishing
  \begin{align*}
    \chi(\cB_n)
    & \le
    \prod_{i=0}^{\lceil \log_2(n) \rceil} \binom{\lceil n/2^i \rceil}{\lceil n/2^{i+1} \rceil}
    \le
    \frac{4^{n}}{2^{\Theta((\log(n))^2)}}.
  \end{align*}
  If $n$ is a power of $2$, then there is an explicit proper coloring strengthening the bound above to
  \begin{align*}
    \chi(\cB_n) & \leq \frac{2}{n}\prod_{i=1}^{\log_2(n)}\binom{2^i}{2^{i-1}}.
  \end{align*}
\end{restatable}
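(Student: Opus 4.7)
The plan is to build the coloring explicitly by recursive halving. Fix a reference subset $R\subseteq[n]$ with $|R|=\lceil n/2\rceil$, and use the natural ordering of $[n]$ to identify every $\lceil n/2\rceil$-subset with $R$ and every $\lfloor n/2\rfloor$-subset with $R^c$. For each $\sigma\in S_n$ I will take as ``outer'' label the preimage $A(\sigma):=\sigma^{-1}(R)$, contributing $\binom{n}{\lceil n/2\rceil}$ choices. Within each outer class $A(\sigma)=A$, the two restrictions $\sigma|_A\colon A\to R$ and $\sigma|_{A^c}\colon A^c\to R^c$ combine, via the fixed identifications (extending the shorter one by a fixed point when $n$ is odd), into a single permutation $\tilde\sigma\in S_{\lceil n/2\rceil}$, which I will color using the inductively-constructed coloring of $\cB_{\lceil n/2\rceil}$.

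The essential verification is properness. If $\sigma$ and $\tau$ share the same color, then $A(\sigma)=A(\tau)=A$, so $\sigma\tau^{-1}$ stabilizes $R$ (and hence $R^c$) as a set. Any single cycle of $S_n$ that preserves a set is entirely contained in it or in its complement, so if $\sigma\tau^{-1}$ is additionally a single cycle of $S_n$ then it lies entirely within $R$ or entirely within $R^c$. A direct computation---using the fact that on $R$ the permutation $\sigma\tau^{-1}$ agrees with $\sigma|_A\tau|_A^{-1}$, and analogously on $R^c$---shows that in either case $\tilde\sigma\tilde\tau^{-1}$ is itself a single cycle in $S_{\lceil n/2\rceil}$ of the same length, contradicting the recursive properness of the coloring of $\cB_{\lceil n/2\rceil}$. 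This establishes the recursion $\chi(\cB_n)\leq\binom{n}{\lceil n/2\rceil}\chi(\cB_{\lceil n/2\rceil})$, which unrolls to the stated product; the asymptotic $4^{n}/2^{\Theta((\log n)^2)}$ then follows from the central binomial estimate $\binom{m}{\lceil m/2\rceil}=\Theta(2^m/\sqrt{m})$ applied at each of the $\Theta(\log n)$ levels, whose $\tfrac{1}{2}\log m$ corrections sum to $\Theta((\log n)^2)$.

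For the power-of-$2$ improvement I would replace the ordered outer label $A(\sigma)$ by the \emph{unordered} partition $\{A(\sigma),A(\sigma)^c\}$, so that the outer level contributes only $\tfrac{1}{2}\binom{n}{n/2}$ choices; iterating over the $\log_2 n$ recursion levels then saves a cumulative factor of $2^{\log_2 n - 1}=n/2$, exactly matching the advertised gain. The main obstacle I expect to meet lies precisely here: two permutations may now share the outer label while having \emph{complementary} ordered labels $A(\sigma)=A(\tau)^c$, in which case $\sigma\tau^{-1}$ swaps $R$ with $R^c$ and so may be a single cycle of even length alternating between the two halves---a case the general argument does not cover. To handle it I would define $\tilde\sigma$ so as to be invariant under the $A\leftrightarrow A^c$ swap---for instance, by anchoring the identifications on whichever part contains the distinguished element $1\in[n]$ and on the image of that element---and then verify that such alternating single cycles are still detected by the recursive inner color, now viewed as a common permutation of $S_{n/2}$ under the uniform identification.
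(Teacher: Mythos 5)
Your argument for the first bound (the basic recursion $\chi(\cB_n)\le\binom{n}{\lceil n/2\rceil}\chi(\cB_{\lceil n/2\rceil})$) is essentially the paper's Lemma on the basic coloring step, just phrased with the fiber $\sigma^{-1}(R)$ (i.e., right cosets of the Young subgroup $S_R\times S_{R^c}$) instead of left cosets and fixed representatives. Since both left and right multiplication are automorphisms of $\cB_n$, this is an immaterial difference, and your verification that within a fixed fiber the quotient map preserves adjacency is correct.

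The gap is in the power-of-$2$ improvement. You correctly isolate the only new danger: if $\sigma$ and $\tau$ have complementary ordered labels, then $\sigma\tau^{-1}$ swaps the two blocks and, if it is a single cycle, it must be a full $n$-cycle alternating between them. But your proposed remedy --- redefining $\tilde\sigma$ to be invariant under the $A\leftrightarrow A^c$ swap by anchoring the identifications at $1$ and at $\sigma(1)$, and then ``verifying that such alternating single cycles are still detected'' --- is left unverified, and it is a genuinely nontrivial claim: one would need to show that $\tilde\sigma\tilde\tau^{-1}$ is itself a single cycle of $S_{n/2}$ in this alternating case (morally, conjugate to $(\sigma\tau^{-1})^2$ restricted to one block, but that requires tracking the four order-isomorphisms carefully, and the composition order of the two halves inside $\tilde\sigma$ also enters). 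The paper takes a cleaner route: it pairs each coset $u_iH$ with $u_i\gamma H$, where $\gamma=\prod_{i=1}^{n/2}(i,n/2+i)$, and observes that for $n\equiv 0\pmod 4$ the element $\gamma$ is even, so an alternating full $n$-cycle $\sigma\tau^{-1}$ has sign $-1$ and forces the inner permutations to have opposite signs. This is why the paper carries the extra inductive hypothesis that the coloring \emph{respects signs}, an invariant your sketch omits entirely. Without either completing the cycle claim or supplying a sign-type invariant, the factor-of-$2$ saving per level, and hence the $2/n$ improvement, is not established.
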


We believe that the techniques we introduce here for the upper bound should be strong enough to prove that
$\alpha(\cB_n)\leq O(n!/c^n)$ for any fixed constant $c\in(1,2)$, but a full theoretic proof seems quite
technical and elusive for now.

\begin{conjecture}
  There exists a constant $K > 0$ such that
  \begin{align*}
    \alpha(\cB_n) & \leq K\cdot\frac{n!}{(2-o(1))^n}.
  \end{align*}
\end{conjecture}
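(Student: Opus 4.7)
The plan is to push the higher-order representation-theoretic LP framework developed in the paper to its asymptotic limit. The $1.97$ bound in Theorem~\ref{theo:main_ub_alpha} arises from solving an LP whose variables/constraints are indexed by a finite collection of irreducible representations $\lambda$ of $S_n$, with coefficients given by character values on cycles and dimensions $d_\lambda$. My first step would be to introduce a natural complexity parameter $k$ (for instance, the number of boxes outside the first row/column of the Young diagrams one is willing to consider, or the order of the tensor power used in the representation-theoretic bound) and view the authors' technique as producing a nested family of LPs whose optimal exponential bases $c_k$ satisfy $c_2\ge 1.97$ and conjecturally $c_k\to 2$ as $k\to\infty$.

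For each fixed $k$, I would analyze the LP value by combining three ingredients: (i) the Murnaghan--Nakayama rule to control character ratios $\chi_\lambda(c)/d_\lambda$ on cycles of various lengths; (ii) classical hook-length or Vershik--Kerov asymptotics for the dimensions $d_\lambda$; and (iii) the observation that the constraint matrix has a block structure respecting the lattice of Young subgroups, which should allow an inductive analysis mirroring the recursive halving construction used in Theorem~\ref{theo:indep_set_construct}. The goal is to identify a family of ``extremal'' diagrams whose contribution dominates the LP dual and to exhibit a dual certificate of value $\Omega((2-\varepsilon_k)^n)$ with $\varepsilon_k\to 0$.

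The main obstacle is twofold. First, the number of LP variables grows with $k$ and the variables couple across many cycle types simultaneously; character values oscillate in sign via Murnaghan--Nakayama, so one cannot simply bound each term in absolute value without incurring lossy constants. An inductive or telescoping argument that exploits cancellations between cycle types seems essential. Second, even after a clean combinatorial formulation, one needs the error $\varepsilon_k$ to decay uniformly in $n$ (not merely for each fixed $n$); otherwise one only obtains $\alpha(\cB_n)\le K_n\cdot n!/(2-o(1))^n$ with $K_n$ growing, which falls short of the conjecture. Closing this uniformity gap is precisely the ``technical and elusive'' point the authors flag, and I expect it to require a genuinely new idea on top of the LP-plus-characters scheme --- perhaps an entropy or quasi-randomness argument that bounds the LP value without solving it explicitly, or a stronger ``tensor power trick'' that replaces the finite family by a self-reducing identity linking $\alpha(\cB_n)$ to $\alpha(\cB_{n/2})$ with the right exponential rate.
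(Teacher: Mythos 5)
This statement is a \emph{conjecture} in the paper, not a theorem: the authors explicitly do not prove it, and in fact remark just before stating it that ``a full theoretic proof seems quite technical and elusive for now.'' So there is no proof of record to compare your proposal against, and any ``proof proposal'' here should really be judged as a research sketch.

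Read in that light, your sketch is reasonable and, to its credit, correctly identifies both the intended mechanism and the genuine obstruction. Your complexity parameter $k$ is essentially the paper's $\ell_0$ (together with $k_0$, $m_0$), and your nested family of LPs with values $c_k$ is exactly their $P^{\ell_0,k_0,m_0}(c)$ in \cref{def:lp_3}; they too expect $c_{\ell_0}\to 2$ as $\ell_0\to\infty$. Your proposed ingredients (Murnaghan--Nakayama for the cycle-character coefficients, Kostka/dimension control, passing to the dual and hunting for a structured certificate) are all present in Sections~\ref{subsec:lp}--\ref{sec:computational}, and the ``fragmented heuristic'' remark in \cref{subsec:speeding_comp} even hints at the kind of low-complexity dual certificate you suggest looking for. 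However, you should be careful about one point: the obstruction is not quite the uniformity-in-$n$ issue you emphasize. The paper already handles $n$-uniformity cleanly through \cref{theo:lp_main}, because for fixed $(\ell_0,k_0,m_0,c)$ the LP $P^{\ell_0,k_0,m_0}(c)$ is $n$-independent and positivity of its optimum yields a bound for all large $n$ via the density increment argument. The genuinely open part is showing that for every $\varepsilon>0$ there exist finite parameters making $\OPT(P^{\ell_0,k_0,m_0}(2-\varepsilon))>0$; that is an asymptotic statement about an infinite family of LPs with oscillating Murnaghan--Nakayama coefficients and no closed-form dual in sight, and it is what the authors are unable to establish. Your sketch correctly anticipates that the sign oscillations are the crux, but you do not supply the telescoping/cancellation mechanism needed to control them, and you acknowledge as much. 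In short: this is a plausible plan of attack that matches the paper's own intended route, not a proof, and the key lemma (a dual certificate, or any closed-form bound on $\OPT(P^{\ell_0,k_0,m_0}(c))$ as $c\to 2$) is still missing.
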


The paper is organized as follows. In \cref{sec:strategy}, we present
an overview of the proof of our main result,
\cref{theo:main_ub_alpha}. In \cref{sec:prelim}, we establish some
notation, recall some facts about representation theory and some
results we need from~\cite{lovett17}. Our main result will be a
combination of theoretical proofs done in \cref{sec:theo_proofs} and
computation which we describe in \cref{sec:computational}. Finally, we
construct an explicit independent set and a proper coloring in
\cref{sec:construction} yielding \cref{theo:indep_set_construct,theo:chrom_ub}.

% LocalWords:  Birkhoff polytope Cayley Lovett Rao Gopalan et al Pak KLR

\section{Proof Strategy}\label{sec:strategy}

In this section, we give an overview of our upper bound proof for the independence number
$\alpha(\cB_n)$. This proof builds on the representation theoretic techniques of Kane, Lovett and
Rao~\cite{lovett17}. Roughly speaking, our approach can be seen as higher-order version of KLR. To establish
an upper bound on $\alpha(\cB_n)$ of $t(n)$ (e.g., $t(n)=n!/c_0^n$ for $c_0 \in (1,2)$), it is enough to show
that any $A \subseteq S_n$ of size larger than $t(n)$ contains an edge in $\cB_n$. KLR classify the set $A$ in
a pseudorandom versus structured dichotomy. If $A$ meets the criteria of being pseudorandom, KLR use
representation theory to count the number of edges within $A$ corresponding to cycles of length precisely $n$
and show that this number is positive. Otherwise, they show that $A$ must have some structured subset $A'
\subseteq A$ that can be embedded in a edge preserving way into a smaller symmetric group $S_{n'}$, with $n' <
n$, and the proof concludes by an inductive argument. A crucial difference in our approach is that when $A$ is
pseudorandom, we are going to count edges corresponding to cycles of length\footnote{In fact, we only need to
  consider cycles of length $n-2i$ since we take $A$ to have permutations of the same sign and we take $n$ to
  be odd.}  $n,n-1,\ldots,n-\ell_0$ for some constant $\ell_0$ rather than only counting $n$-cycles. This is
precisely the sense in which our approach is a higher-order version of KLR. By considering all these
additional cycle lengths, the representation theoretic analysis becomes substantially more involved and more
ingredients are used as we detail below.

To a set $A \subseteq S_n$ one can associate a class function $\varphi_A$ whose precise definition is not
important for this high-level discussion. We denote by $\Psi_\ell \coloneqq \Psi_\ell(A)$ the number of edges
within $A$ corresponding to cycles of length exactly $n-\ell$. A simple representation theoretic argument
(easily derivable from KLR) establishes that $\Psi_\ell$ is proportional to
\begin{align*}
  \sum_{\lambda \vdash n} p^{\ell}_\lambda \cdot \chi^{\lambda}(\varphi_A),
\end{align*}
where $\{p^{\ell}_\lambda\}_\lambda$ are (explicit) coefficients over the reals and $\chi^{\lambda}$ is the
irreducible character associated to the partition $\lambda \vdash n$. In case $A$ is pseudorandom, the characters must
satisfy a set of linear constraints of the form
\begin{equation}\label{eq:linear_constraints}
  \left\{\sum_{\lambda \vdash n} k_{\lambda,m} \cdot \chi^{\lambda}(\varphi_A) \le c_m \right\}_{m \in \cM},
\end{equation}
where $k_{\lambda,m}$ and $c_m$ are (explicit) real coefficients. Set $M \coloneqq \max_{\ell}
\Psi_{\ell}$. In particular, if $M > 0$, then $A$ is certainly not independent as some edge count
$\Psi_\ell > 0$. Instead of working with the precise character evaluations $\chi^{\lambda}(\varphi_A)$
(abiding to representation theoretic rules), we can relax $\chi^{\lambda}(\varphi_A)$ to be arbitrary real
variables\footnote{Actually, the structure of $\varphi_A$ forces $\chi^\lambda(\varphi_A) \ge 0$ and thus
  $x_\lambda$ can be taken to be non-negative.} $x_\lambda$ only bound to satisfy the linear
constraints~\eqref{eq:linear_constraints}. By considering the objective function $\min_{x_\lambda} M$, we
have a linear program on our hands. Similarly, if the optimum value of this linear program is positive, we are
again certain that $A$ is not independent since we are dealing with a relaxed minimization problem.

The linear program mentioned above is actually somewhat more delicate as the coefficients $k_{\lambda,m}$ and
$p^{\ell}_\lambda$ depend on the degree $n$ of $S_n$. Fortunately, for ``low complexity'' shapes $\lambda
\coloneqq (n-\sum_{i=2}^s \lambda_i,\ldots,\lambda_s)$, those in which $\sum_{i=2}^s \lambda_i$ is a constant,
the coefficients $k_{\lambda,m}$ and $p^\ell_\lambda$ become fixed constants provided $n$ is sufficiently
large. To be able to work in this asymptotic regime where these coefficients of some low complexity shapes are
fixed, we have to additionally require $n$ large when $A$ is pseudorandom. For this reason, we will need to
control the total loss in cardinality when $A$ is structured. Recall that the structured case is handled by
finding some $A' \subseteq A$ which is then embedded in $S_{n'}$ for some $n'$ possibly much smaller than
$n$. To ensure that $n'$ is still arbitrarily large we exploit a density increment phenomenon in the
structured case, namely, we observe that this $A'$ satisfies $\lvert A' \rvert/n'! > c^{n-n'} \lvert A \rvert
/n!$, where $c > 1$ is a constant related to the lack of pseudorandomness. To see that density increment gives
a handle on the size of $A$, consider the following scenario. Initially, if $A$ is not too small, say $\lvert
A \rvert > n!/c_0^n$ for some constant $c_0 < c$, then any $A' \subseteq A$ must be ultimately embedded in
$S_{n'}$ with degree $n' = \Omega_{c_0,c}(n)$, otherwise it is not difficult to show that the density of $A'$
in $S_{n'}$ would be larger than $1$ which is impossible.

We dealt with low complexity shapes above, but we need to explain how
to analyze the remaining shapes. Following a similar argument of KLR,
we show that provided the low complexity shapes are not too few, all
the remaining shapes can be absorbed in a ``tail bound'' which
crucially relies on $c_0 < 2$.

The steps above produce a family of linear programs with parameters $c$ and $\ell_0$ (but not depending on
$n$). If for a given choice of these parameters the associated linear program has optimal value $M > 0$,
then we conclude that $\alpha(\cB_n) \le K \cdot n!/(c-o(1))^n$ for some universal constant $K > 0$. We obtain
our main result, \cref{theo:main_ub_alpha}, by computationally solving a carefully chosen set of
parameters. Let us point out that solving these linear programs as $\ell_0$ gets larger and $c$ approaches $2$
becomes quite challenging even computationally; this requires additional ideas, which we discuss in
\cref{sec:computational}.

% LocalWords:  pseudorandom KLR pseudorandomness

\section{Preliminaries}\label{sec:prelim}

We denote the set of natural numbers by $\NN \coloneqq \{0,1,\ldots,\}$ and the set of positive integers by
$\NN_+ \coloneqq \NN\setminus\{0\}$. Given $n,k\in\NN_+$ with $n \ge k$, we let $[n] \coloneqq \{1,\ldots,n\}$
(and $[0]\coloneqq\varnothing$) and let $[n]_k \coloneqq \{(i_1,\ldots,i_k)\in [n]^k :
\lvert\{i_1,\ldots,i_k\}\rvert = k\}$ be the set of $k$-tuples of elements of $[n]$ with no repeated
coordinates. We also let $(n)_k \coloneqq n(n-1)\cdots(n-k+1)$ denote the falling factorial so that
$\lvert[n]_k\rvert = (n)_k$. Let $S_n$ be the symmetric group on $[n]$. We denote the sign of permutation
$\sigma$ by $\sgn(\sigma)$. Let $\cC_{n,\ell}\subseteq S_n$ be the set of all single cycles of length $\ell$
in $S_n$ and let $\cC_n \coloneqq \bigcup_{\ell=2}^n \cC_{n,\ell}$.

\begin{definition}[Birkhoff Graph]
  The \emph{Birkhoff Graph} $\cB_n$ is the Cayley graph $\Cay(S_n,\cC_n)$, i.e., the vertex set is $S_n$ and
  $\sigma,\tau \in S_n$ are adjacent if and only if $\sigma\tau^{-1} \in \cC_n$.
\end{definition}

\begin{remark}\label{remark:birkhoff_auto}
  Recall that since $\cC_n$ is closed under conjugation, for every $\sigma \in S_n$, both multiplication by
  $\sigma$ maps $\tau \mapsto \tau \sigma$ and $\tau \mapsto \sigma \tau$ are automorphisms of $\cB_n$.
\end{remark}

\subsection{Representation of the Symmetric Group}

We recall some important definitions and results from the representation theory of $S_n$. For a thorough
introduction, we point the reader to the book of Sagan~\cite{sagan13} (see~\cite{serre96} for an introduction
to general representation theory). The irreducible representations of $S_n$ are the so-called \emph{Specht
  modules}, which are in one-to-one correspondence with \emph{partitions} of $n$. Recall that a partition of
$n$, $\lambda \vdash n$, is a tuple $\lambda \coloneqq (\lambda_1,\ldots,\lambda_s)$ of positive integers with
$\lambda_1 \ge \cdots \ge \lambda_s$ and $\sum_{i=1}^s \lambda_i = n$; the length $s$ of $\lambda$ as a
sequence is called the \emph{height} of $\lambda$ and denoted by $\htt(\lambda)$ and the \emph{size} of
$\lambda$ is denoted by $\lvert\lambda\rvert\coloneqq n$. It will be convenient to
visualize a partition via Young diagram (also known as Ferrers diagram), which is a left-adjusted box diagram
in which the $i$th row has $\lambda_i$ boxes (see \cref{subfig:youngdiagram}).

We denote by $S^{\lambda}$ the Specht module corresponding to $\lambda \vdash n$ and by $\chi^{\lambda} \colon
S_n \to \RR$ its corresponding character. We let $f_{\lambda}$ be the dimension of
$S^{\lambda}$. Alternatively, $f_{\lambda}$ can be computed as $\chi^{\lambda}(\id_n)$, which also corresponds
to the number of \emph{standard tableaux} on shape $\lambda$ (see~\cite[Section~2.5]{sagan13}). A standard
tableau of shape $\lambda \vdash n$ is a filling of the Young diagram of $\lambda$, where each box is filled
with a distinct number from $[n]$ so that each row and each column is increasing (see
\cref{subfig:stdtableau}). More generally, a semi-standard tableau of shape $\lambda \vdash n$ and content
$\mu \vdash n$ is a filling of the Young diagram of $\lambda$ with $\mu_i$ copies of $i$ and which is
(strictly) increasing in each column and non-decreasing in each row (see \cref{subfig:semistdtableau}). The
number of tableaux of shape $\lambda$ and content $\mu$ is called the \emph{Kostka number} $K_{\lambda,\mu}$.

\begin{figure}[htbp]
  \input{young}
\end{figure}

Another family of important modules, also indexed by partitions $\mu \vdash n$, is that of the \emph{Young
  modules} $M^{\mu}$. We only consider Young modules associated with shapes of the form $h^n_k \coloneqq (n-k,
1^k)$ commonly referred to as \emph{hooks}. The module $M^{h^n_k}$ corresponds to the natural action of $S_n$
on $[n]_k$ given by $\sigma \cdot (i_1,\ldots,i_k) \coloneqq (\sigma(i_1),\ldots,\sigma(i_k))$. The
irreducible decomposition of $M^{\mu}$ is given by the Young's Rule, where the Kostka numbers give the
multiplicities.

\begin{theorem}[Young's Rule~\protect{\cite[Theorem~2.11.2]{sagan13}}]\label{theo:young_decomp}
 Let $\mu \vdash n$. We have
 \begin{align*}
   M^{\mu}
   & \simeq
   \bigoplus_{\lambda \vdash n}K_{\lambda,\mu} \cdot S^{\lambda}.
 \end{align*}
\end{theorem}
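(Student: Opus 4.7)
The plan is to prove Young's Rule by computing, for each partition $\lambda \vdash n$, the multiplicity of the Specht module $S^\lambda$ in the Young module $M^\mu$, and identifying it with the Kostka number $K_{\lambda,\mu}$. I would proceed via Frobenius reciprocity plus an explicit combinatorial basis indexed by semi-standard tableaux.

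First, I would reinterpret $M^\mu$ as the induced representation $\mathrm{Ind}_{S_\mu}^{S_n}\mathbf{1}$, where $S_\mu \coloneqq S_{\mu_1}\times S_{\mu_2}\times\cdots$ is the Young subgroup stabilizing a fixed $\mu$-tabloid. (A $\mu$-tabloid is a filling of the Young diagram of $\mu$ with entries from $[n]$, where the entries of each row are regarded as an unordered set; the cosets $S_n/S_\mu$ are in bijection with these tabloids, and $M^\mu$ is exactly the permutation module on them, consistent with the hook case $M^{h^n_k}$ in the excerpt.) By Frobenius reciprocity and Schur's lemma, the multiplicity of $S^\lambda$ in $M^\mu$ equals
\begin{equation*}
  \dim\mathrm{Hom}_{S_n}(S^\lambda, M^\mu) = \dim\mathrm{Hom}_{S_\mu}(S^\lambda\!\downarrow_{S_\mu}, \mathbf{1}) = \dim(S^\lambda)^{S_\mu},
\end{equation*}
the dimension of the space of $S_\mu$-invariant vectors in $S^\lambda$. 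So the theorem reduces to showing $\dim(S^\lambda)^{S_\mu} = K_{\lambda,\mu}$.

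Next, I would produce an explicit basis of $(S^\lambda)^{S_\mu}$ indexed by semi-standard tableaux $T$ of shape $\lambda$ and content $\mu$. The standard construction goes through \emph{generalized polytabloids}: given such a $T$, form the $\mu$-tabloid $\{T\}$ obtained by reading row sets, and antisymmetrize over the column-stabilizer of the underlying shape to land in $S^\lambda \subseteq M^\lambda$ (after identifying $M^\lambda$ suitably). One then checks, using the Garnir relations that cut out $S^\lambda$ inside $M^\lambda$, that each of these vectors is genuinely $S_\mu$-invariant, and that every $S_\mu$-invariant polytabloid can be rewritten as an integer combination of the semi-standard ones. This gives spanning.

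The main obstacle will be the linear independence of this family; this is where Sagan's treatment invests the most effort. My approach would be to order tableaux by the dominance (or a refining lexicographic) order on their column-reading words, and to track the ``leading tabloid'' of each generalized polytabloid. A straightening argument then shows that the coefficient matrix of the candidate basis, expressed in the standard tabloid basis of $M^\mu$, is upper-triangular with nonzero diagonal entries, yielding independence. Combining spanning and independence gives $\dim(S^\lambda)^{S_\mu}=K_{\lambda,\mu}$, which by the reciprocity step above is exactly the desired decomposition $M^\mu\simeq\bigoplus_{\lambda\vdash n}K_{\lambda,\mu}\cdot S^\lambda$.
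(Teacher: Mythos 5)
The paper gives no proof of this statement: it cites Young's Rule directly from Sagan's book (Theorem~2.11.2) as a standard fact, so there is no argument in the paper to compare against. Your sketch is a correct outline of the textbook proof and in fact follows Sagan's own route closely. A couple of small remarks. Your Frobenius reciprocity step
\begin{equation*}
  \dim\mathrm{Hom}_{S_n}(S^\lambda, M^\mu)
  = \dim\mathrm{Hom}_{S_\mu}\bigl(S^\lambda\downarrow_{S_\mu}, \mathbf{1}\bigr)
  = \dim(S^\lambda)^{S_\mu}
\end{equation*}
is correct, but the last equality tacitly uses that the space of $S_\mu$-equivariant maps $S^\lambda\to\mathbf{1}$ is the $S_\mu$-invariants of the \emph{dual}, and then that $\dim\bigl((S^\lambda)^*\bigr)^{S_\mu}=\dim(S^\lambda)^{S_\mu}$; this is fine in characteristic zero (and because $S_n$-irreducibles are rational, hence self-dual), but it is worth stating. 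Also, Sagan's Theorem~2.11.2 is actually phrased slightly differently: rather than building an $S_\mu$-invariant subspace of $S^\lambda$, he constructs for each generalized tableau $T$ of shape $\lambda$ and content $\mu$ a homomorphism $\theta_T\in\mathrm{Hom}_{S_n}(S^\lambda,M^\mu)$, and proves that $\{\theta_T : T\ \text{semi-standard}\}$ is a basis of that Hom-space using a triangularity/straightening argument with respect to the column dominance order. Your ``generalized polytabloid in $(S^\lambda)^{S_\mu}$'' formulation is dual to this, and your triangularity-via-leading-tabloid argument is precisely what makes the independence work; the substance is the same. Both versions buy the same thing: an explicit combinatorial basis indexed by semi-standard tableaux, giving the multiplicity $K_{\lambda,\mu}$. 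There is no gap in the plan, though the straightening step (the ``main obstacle'' you flag) is indeed the part that carries all the weight and would need to be written out carefully if this were to serve as a full proof rather than an outline.
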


The main tool we use to compute characters of Specht modules is the so-called Murnaghan--Nakayama rule. First,
recall that characters are \emph{class functions}, i.e., functions on $S_n$ that are invariant under
conjugation. Since conjugacy classes of $S_n$ are in one-to-one correspondence with partitions of $n$ (the
conjugacy class of $\sigma \in S_n$ corresponds to the partition whose parts are the lengths of the cycles in
the cycle decomposition of $\sigma$), we typically view characters as functions defined on partitions of
$n$. To apply the Murnaghan--Nakayama rule, we will also need the notion of \emph{rim hook}. Recall that a rim
hook $\xi$ of a shape $\lambda \vdash n$ is a contiguous region in the (right) border of $\lambda$ that does
not contain a two-by-two sub-shape and whose removal leaves a valid shape denoted $\lambda \setminus \xi$ (see
\cref{fig:rim}).

\begin{figure}[htbp]
  \input{rim}
\end{figure}

\begin{theorem}[Murnaghan--Nakayama Rule~\cite{sagan13}]\label{theo:mn_rule}
  Let $\lambda \vdash n$ and $\mu \coloneqq (\mu_1,\ldots,\mu_k)$ be a partition of $n$. Then for every
  $i\in[k]$, we have
  \begin{align*}
    \chi^{\lambda}(\mu)
    & =
    \sum_{\xi} (-1)^{\htt(\xi)-1}
    \cdot \chi^{\lambda \setminus \xi}((\mu_1,\ldots,\widehat{\mu_i},\ldots,\mu_k)),
  \end{align*}
  where $\xi$ ranges over all the rim hooks of size $\mu_i$ in $\lambda$, $\htt(\xi)$ is the height of $\xi$
  (i.e., the number of rows of $\xi$) and $(\mu_1,\ldots,\widehat{\mu_i},\ldots,\mu_k)$ is partition of $n -
  \mu_i$ obtained from $\mu$ by omitting part $\mu_i$.
\end{theorem}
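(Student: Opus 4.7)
The plan is to prove the rule via the Frobenius characteristic map, the standard algebraic route laid out in~\cite{sagan13}. Let $\Lambda^n$ denote the space of homogeneous symmetric functions of degree $n$, equipped with the usual Schur, power-sum, and complete homogeneous bases $\{s_\lambda\}$, $\{p_\mu\}$, and $\{h_\mu\}$. The Frobenius characteristic is an isometric isomorphism from the space of class functions on $S_n$ onto $\Lambda^n$ sending $\chi^\lambda\mapsto s_\lambda$, and under this correspondence one obtains the expansion $p_\mu=\sum_{\lambda\vdash n}\chi^\lambda(\mu)\,s_\lambda$. Consequently, $\chi^\lambda(\mu)$ is precisely the coefficient of $s_\lambda$ in $p_\mu$, and the whole proof is reduced to manipulating symmetric functions.

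The core combinatorial step is the Pieri-type identity
\begin{align*}
  p_k\cdot s_\nu
  & =
  \sum_{\xi}(-1)^{\htt(\xi)-1}\,s_{\nu\cup\xi},
\end{align*}
where the sum runs over all rim hooks $\xi$ of size $k$ that can be appended to $\nu$ to form a legal shape. I would establish this by starting from the Jacobi--Trudi determinantal formula $s_\nu=\det(h_{\nu_i-i+j})$, multiplying by $p_k$, and using the standard Newton-type identities relating $p_k$ to the $h_m$ to re-expand the resulting determinant. After row and column manipulations, the surviving terms biject with the ways of gluing a single connected border strip of length $k$ onto $\nu$, with the sign $(-1)^{\htt(\xi)-1}$ emerging from the parity of the strip's vertical span.

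Given this Pieri identity, the Murnaghan--Nakayama rule follows by direct bookkeeping. Fix $i\in[k]$ and write $p_\mu=p_{\mu_i}\cdot p_{\mu'}$, where $\mu'\coloneqq(\mu_1,\ldots,\widehat{\mu_i},\ldots,\mu_k)$ is $\mu$ with part $\mu_i$ omitted. Expand $p_{\mu'}=\sum_{\nu\vdash n-\mu_i}\chi^\nu(\mu')\,s_\nu$ by the Frobenius expansion above, apply the Pieri identity to each factor $p_{\mu_i}\cdot s_\nu$, and extract the coefficient of $s_\lambda$ to obtain
\begin{align*}
  \chi^\lambda(\mu)
  & =
  \sum_{\xi}(-1)^{\htt(\xi)-1}\,\chi^{\lambda\setminus\xi}(\mu'),
\end{align*}
where $\xi$ ranges over the rim hooks of size $\mu_i$ in $\lambda$, exactly as claimed.

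The main obstacle is the Pieri-type identity for $p_k\cdot s_\nu$; once it is in hand, everything else reduces to a formal manipulation of the Frobenius correspondence. A more combinatorial alternative uses Bernstein creation operators on $\Lambda$, or a sign-reversing involution on semistandard tableaux that cancels all contributions not matching a single connected rim-hook addition, but either route concentrates essentially the same technical content at the same point.
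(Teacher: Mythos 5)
The paper does not prove this theorem; it simply cites it from Sagan's book, so there is no in-paper argument to compare against. Your outline is the standard symmetric-function proof: pass through the Frobenius characteristic, reduce the statement to the coefficient of $s_\lambda$ in $p_\mu$, and peel off one power sum at a time via the border-strip Pieri rule $p_k\,s_\nu=\sum_\xi(-1)^{\htt(\xi)-1}s_{\nu\cup\xi}$. That is essentially the route taken in the cited reference, and the bookkeeping in your final step is correct. The only place I would push back is that the border-strip Pieri identity is the entire mathematical content of the theorem, and your sketch of it (``Jacobi--Trudi, multiply by $p_k$, Newton identities, row/column manipulations'') is not yet an argument: the re-expansion you describe produces an alternating sum of Jacobi--Trudi determinants for compositions rather than partitions, and one must run a sign-reversing cancellation or a careful straightening to see that exactly the connected border-strip additions survive with the claimed sign. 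If you intend this as a full proof rather than a roadmap, that lemma needs to be carried out; as a roadmap it is accurate.
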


\begin{remark*}
  Note that the Murnaghan-Nakayama rule gives us the freedom to choose which part of $\mu$ to remove first. We
  will explore this flexibility in our proofs.
\end{remark*}

We will be working with elements in the group algebra $\RR[S_n]$, which are formal $\RR$-linear combinations
of elements in $S_n$. Given $\phi \coloneqq \sum_{\sigma \in S_n} \phi_{\sigma} \cdot \sigma \in \RR[S_n]$
with $\phi_{\sigma} \in \RR$, it is convenient to regard $\phi$ as a function $S_n \to \RR$ defined by $\sigma
\mapsto \phi_{\sigma}$. The space $\RR[S_n]$ is equipped with the inner product
\begin{align*}
  \langle \phi,\psi \rangle
  & \coloneqq
  \frac{1}{\lvert S_n \rvert} \sum_{\sigma \in S_n} \phi_{\sigma} \cdot \psi_{\sigma}.
\end{align*}

Under this inner product, the characters of irreducible representations form an orthonormal basis of the
sub-space of class functions \cite[Proposition~1.10.2]{sagan13}, from which we can extract the following
Parseval formula.

\begin{fact}[Parseval]\label{fact:parseval_sym}
  Let $\phi,\psi \in \RR[S_n]$ be class functions. Then
  \begin{align*}
    \langle \phi, \psi \rangle
    & =
    \frac{1}{\lvert S_n \rvert^2} \sum_{\lambda \vdash n} \chi^{\lambda}(\phi) \cdot \chi^{\lambda}(\psi),
  \end{align*}
  where $\chi^{\lambda}$ is linearly extended to class functions.
\end{fact}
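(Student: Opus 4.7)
The plan is to derive this Parseval identity from the stated fact that the irreducible characters $\{\chi^\lambda : \lambda \vdash n\}$ form an orthonormal basis of the subspace of class functions in $\RR[S_n]$ under the given inner product. Since $\phi$ and $\psi$ are both class functions, I can expand them uniquely in this basis as $\phi = \sum_{\lambda \vdash n} a_\lambda \chi^\lambda$ and $\psi = \sum_{\lambda \vdash n} b_\lambda \chi^\lambda$ for some real coefficients $a_\lambda, b_\lambda$. By orthonormality of the basis, I immediately get $\langle \phi, \psi \rangle = \sum_{\lambda \vdash n} a_\lambda b_\lambda$.

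Next, I would identify the coefficients $a_\lambda$ and $b_\lambda$ in terms of $\chi^\lambda(\phi)$ and $\chi^\lambda(\psi)$. Taking the inner product of $\phi$ with $\chi^\lambda$ and again using orthonormality gives $a_\lambda = \langle \phi, \chi^\lambda \rangle = \frac{1}{\lvert S_n \rvert} \sum_{\sigma \in S_n} \phi_\sigma \chi^\lambda(\sigma)$. The key observation is to compare this with the linear extension of $\chi^\lambda$ to $\RR[S_n]$ used in the statement: by definition, $\chi^\lambda(\phi) = \sum_{\sigma \in S_n} \phi_\sigma \chi^\lambda(\sigma)$, so $a_\lambda = \chi^\lambda(\phi)/\lvert S_n \rvert$, and analogously $b_\lambda = \chi^\lambda(\psi)/\lvert S_n \rvert$.

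Substituting these identifications into $\langle \phi, \psi \rangle = \sum_\lambda a_\lambda b_\lambda$ yields the claimed formula, with the factor $1/\lvert S_n \rvert^2$ appearing naturally from the two factors of $1/\lvert S_n \rvert$. There is no real obstacle here: the statement is essentially a bookkeeping consequence of the change of basis between the ``permutation coordinates'' $\sigma \mapsto \phi_\sigma$ and the ``character coordinates'' $\lambda \mapsto \chi^\lambda(\phi)$, together with the fact that the irreducible characters are orthonormal with respect to the normalized inner product. The only mild subtlety worth being explicit about is distinguishing the character $\chi^\lambda$ viewed as a class function on $S_n$ (with values $\chi^\lambda(\sigma)$) from its linear extension to the group algebra (with value $\chi^\lambda(\phi)$), since the Parseval identity mixes the two conventions.
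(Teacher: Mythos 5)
Your proof is correct and matches the paper's (implicit) derivation: the paper simply cites orthonormality of irreducible characters as a class-function basis and treats the Parseval identity as an immediate consequence, which is precisely the expansion-in-basis calculation you carry out. The coefficient identification $a_\lambda = \chi^\lambda(\phi)/\lvert S_n\rvert$ is the only real step, and you have it right.
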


By reflecting a shape $\lambda \vdash n$ along the diagonal, we obtain the transposed shape $\lambda^\top$
given by $\lambda^{\top}_i \coloneqq \lvert \{j : \lambda_j \ge i\} \rvert$. We end this section with a
simple, but useful fact of irreducible characters in $S_n$.

\begin{fact}\label{fact:sign_of_transpose}
  Let $\lambda \vdash n$ and $\pi \in S_n$. We have
  \begin{align*}
    \chi^{\lambda^{\top}}(\pi)
    & =
    \sgn(\pi) \cdot \chi^{\lambda}(\pi).
  \end{align*}
\end{fact}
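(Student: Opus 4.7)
The plan is to prove the identity by induction on $n = \lvert \lambda \rvert$ using the Murnaghan--Nakayama rule (\cref{theo:mn_rule}), since both sides are class functions and every permutation $\pi \in S_n$ has a well-defined cycle type $\mu \vdash n$. The base case $n = 0$ (or equivalently $\pi = \id$) is immediate: $\chi^{\lambda^\top}(\id) = f_{\lambda^\top} = f_\lambda = \chi^\lambda(\id)$, since transposition of shapes is a height-preserving-up-to-reflection bijection on standard tableaux, and $\sgn(\id) = 1$.

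For the inductive step, I would fix any part $\mu_i$ of $\mu$ and apply \cref{theo:mn_rule} to both $\chi^\lambda(\mu)$ and $\chi^{\lambda^\top}(\mu)$, removing the part $\mu_i$ first in each. The combinatorial heart of the proof is the observation that the operation $\xi \mapsto \xi^\top$ of reflecting a rim hook along the diagonal gives a bijection between rim hooks of size $\mu_i$ in $\lambda$ and rim hooks of size $\mu_i$ in $\lambda^\top$, and moreover $(\lambda \setminus \xi)^\top = \lambda^\top \setminus \xi^\top$. The key geometric identity is that a rim hook $\xi$ of size $\mu_i$ fits inside a $\htt(\xi) \times (\mu_i - \htt(\xi) + 1)$ bounding rectangle, so transposing swaps rows and columns:
\begin{equation*}
  \htt(\xi^\top) \;=\; \mu_i - \htt(\xi) + 1.
\end{equation*}

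Plugging this into the Murnaghan--Nakayama sum for $\chi^{\lambda^\top}(\mu)$ and applying the inductive hypothesis to $\chi^{(\lambda \setminus \xi)^\top}$ on the smaller partition $(\mu_1,\ldots,\widehat{\mu_i},\ldots,\mu_k) \vdash n-\mu_i$, each term picks up the factor
\begin{equation*}
  (-1)^{\htt(\xi^\top)-1} \;=\; (-1)^{\mu_i - \htt(\xi)} \;=\; (-1)^{\mu_i - 1}\cdot(-1)^{\htt(\xi)-1},
\end{equation*}
together with $\sgn(\pi')$, where $\pi'$ is any permutation of cycle type $(\mu_1,\ldots,\widehat{\mu_i},\ldots,\mu_k)$. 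Since the sign of an $\mu_i$-cycle is exactly $(-1)^{\mu_i - 1}$ and the sign of $\pi$ is the product of the signs of its cycles, we get $(-1)^{\mu_i - 1}\cdot\sgn(\pi') = \sgn(\pi)$, and the remaining sum collapses to $\sgn(\pi)\cdot\chi^\lambda(\mu)$ by Murnaghan--Nakayama applied to $\lambda$.

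I expect the only mildly technical step to be the verification of the two shape-theoretic facts used above: the bijection $\xi \mapsto \xi^\top$ between rim hooks of a given size in $\lambda$ and in $\lambda^\top$, and the height identity $\htt(\xi^\top) = \lvert\xi\rvert - \htt(\xi) + 1$. Both are standard but deserve a brief justification; once they are in hand, the induction runs cleanly and the sign bookkeeping matches up exactly as above. An alternative, even shorter route would be to invoke the isomorphism of Specht modules $S^{\lambda^\top} \simeq S^\lambda \otimes \mathrm{sgn}$, but the inductive proof via \cref{theo:mn_rule} is self-contained within the preliminaries already developed in the paper.
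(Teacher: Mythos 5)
Your proof is correct, but note that the paper actually gives \emph{no} proof of \cref{fact:sign_of_transpose}: it is stated as a bare fact, implicitly relying on the standard module-theoretic identity $S^{\lambda^\top}\simeq S^{\lambda}\otimes\mathrm{sgn}$, which you mention at the end. Your Murnaghan--Nakayama induction is a genuine alternative that stays self-contained within the tools the paper's preliminaries already set up: the key shape-theoretic facts you isolate are indeed the ones that make it work. Transposition is an involution on Young diagrams that commutes with cell removal, so $\xi\mapsto\xi^\top$ bijects rim hooks of size $\mu_i$ in $\lambda$ with those in $\lambda^\top$ and satisfies $(\lambda\setminus\xi)^\top=\lambda^\top\setminus\xi^\top$; and since a rim hook of size $m$ and height $h$ occupies exactly $h$ rows and $m-h+1$ columns (following the $m-1$ unit steps from hand to foot, $h-1$ of them vertical), transposing gives $\htt(\xi^\top)=\mu_i-\htt(\xi)+1$, so the per-term sign correction is $(-1)^{\mu_i-1}$, exactly the sign of a $\mu_i$-cycle. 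Combined with the inductive hypothesis applied to the smaller cycle type $(\mu_1,\ldots,\widehat{\mu_i},\ldots,\mu_k)$, multiplicativity of $\sgn$ over cycles closes the induction, with the degenerate base case $\lambda=\varnothing$ (or, as you note, $\pi=\id$, where $f_{\lambda^\top}=f_\lambda$ because transposition bijects standard tableaux). The module-theoretic route is shorter if one is willing to import that isomorphism; your version has the advantage of deriving the identity purely from \cref{theo:mn_rule}, which the paper already states, at the modest cost of verifying the two shape-level observations.
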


\subsection{Recalling KLR Results}

In this section, we recall some key results of Kane, Lovett and Rao~\cite{lovett17}, with minor
generalizations when necessary. For the reader's convenience, the omitted proofs can be found in
\cref{app:klr_proofs}.

For every non-empty set $A \subseteq S_n$, define the class function $\phi_{A} \in \RR[S_n]$ as
\begin{align*}
  \phi_{A}
  & \coloneqq
  \frac{1}{|S_n|}\frac{1}{|A|^2} \sum_{\substack{\sigma \in S_n\\  \pi,\pi' \in A}} \sigma \pi (\pi')^{-1} \sigma^{-1}.
\end{align*}
For $\ell \in \{0,\ldots,n-2\}$, define another class function $\psi_{\ell} \in \RR[S_n]$ as
\begin{align*}
  \psi_{\ell}
  & \coloneqq
  \frac{1}{\lvert C_{n,n-\ell} \rvert} \sum_{\tau \in C_{n,n-\ell} } \tau,
\end{align*}
where (we recall that) $C_{n,n-\ell}$ is the set of $(n-\ell)$-cycles of $S_n$.

The following claim says that the number of edges of $\cB_n$ within $A$ corresponding to cycles of length
$n-\ell$ can be computed from the inner product of these class functions.

\begin{claim}\label{claim:parseval_to_edges}
  We have
  \begin{align*}
    \langle \phi_{A}, \psi_{\ell} \rangle
    & =
    \frac{\lvert E_{\ell}[A,A] \rvert}{\lvert A \rvert^2 \lvert S_n \rvert \lvert C_{n,n-\ell} \rvert },
  \end{align*}
  where
  \begin{align*}
    E_{\ell}[A,A]
    & \coloneqq
    \left\lbrace (\pi,\pi') \in A^2  : \pi (\pi')^{-1} \in C_{n,n-\ell}\right\rbrace.
  \end{align*}
  In particular, if $\sum_{\lambda \vdash n} \chi^{\lambda}(\phi_{A}) \chi^{\lambda}(\psi_{\ell}) > 0$, then
  $A$ contains an edge of $\cB_n$.
\end{claim}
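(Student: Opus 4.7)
The plan is to unfold the definitions of $\phi_A$ and $\psi_\ell$, compute the inner product directly using that $C_{n,n-\ell}$ is closed under conjugation, and then invoke Parseval (\cref{fact:parseval_sym}) for the ``in particular'' conclusion.

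First, I would expand the inner product at the level of coefficients. Using $(\psi_\ell)_g = (1/\lvert C_{n,n-\ell}\rvert) \cdot [g \in C_{n,n-\ell}]$ and reading off the coefficient $(\phi_A)_g$ from the defining triple sum, I obtain
\begin{align*}
  \langle \phi_A, \psi_\ell \rangle
  & = \frac{1}{\lvert S_n\rvert} \sum_{g \in S_n} (\phi_A)_g \cdot (\psi_\ell)_g \\
  & = \frac{1}{\lvert S_n\rvert^2 \lvert A\rvert^2 \lvert C_{n,n-\ell}\rvert}
  \cdot \left\lvert\left\{(\sigma,\pi,\pi') \in S_n \times A \times A : \sigma \pi (\pi')^{-1} \sigma^{-1} \in C_{n,n-\ell}\right\}\right\rvert.
\end{align*}

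Next, since $C_{n,n-\ell}$ is a single conjugacy class of $S_n$ (and in particular closed under conjugation), the event $\sigma \pi (\pi')^{-1} \sigma^{-1} \in C_{n,n-\ell}$ is equivalent to $\pi (\pi')^{-1} \in C_{n,n-\ell}$, so the constraint no longer depends on $\sigma$. Summing over $\sigma$ produces a factor of $\lvert S_n\rvert$, and the remaining count over pairs $(\pi,\pi') \in A \times A$ is exactly $\lvert E_\ell[A,A]\rvert$, which yields the claimed identity after the $\lvert S_n\rvert$ cancels one factor in the denominator.

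For the ``in particular'' part, I would observe that $\phi_A$ is a class function since it is defined as an average over all $S_n$-conjugates of elements $\pi(\pi')^{-1}$, and $\psi_\ell$ is a class function since it is uniform on the conjugacy class $C_{n,n-\ell}$. Applying Parseval (\cref{fact:parseval_sym}) gives
\[
\frac{\lvert E_\ell[A,A]\rvert}{\lvert A\rvert^2 \lvert S_n\rvert \lvert C_{n,n-\ell}\rvert}
= \langle \phi_A, \psi_\ell \rangle
= \frac{1}{\lvert S_n\rvert^2} \sum_{\lambda \vdash n} \chi^\lambda(\phi_A)\, \chi^\lambda(\psi_\ell).
\]
Since all the normalizations on the left-hand side are strictly positive, positivity of the character sum on the right forces $\lvert E_\ell[A,A]\rvert > 0$; any witnessing pair $(\pi,\pi')$ has $\pi(\pi')^{-1}$ an $(n-\ell)$-cycle with $n-\ell \geq 2$, hence is an edge of $\cB_n$.

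There is no serious obstacle here beyond careful bookkeeping of the three normalizing factors $\lvert S_n\rvert$, $\lvert A\rvert^2$, and $\lvert C_{n,n-\ell}\rvert$, together with the verification that both $\phi_A$ and $\psi_\ell$ are class functions so that Parseval is applicable. Both verifications are immediate from the constructions above.
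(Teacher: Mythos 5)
Your proposal is correct and follows essentially the same approach as the paper: both unfold the definitions, use the conjugation-invariance of $C_{n,n-\ell}$ to decouple the sum over $\sigma$ from the sum over $(\pi,\pi')$, and then apply Parseval for the final conclusion. The only cosmetic difference is the direction of the computation (you start from $\langle\phi_A,\psi_\ell\rangle$ and derive $\lvert E_\ell[A,A]\rvert$, while the paper starts from $\lvert E_\ell[A,A]\rvert$ and derives the inner product), which is an immaterial rearrangement of the same argument.
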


\begin{proof}
  We have
  \begin{align*}
    \lvert E_{\ell}[A,A] \rvert
    & =
    \sum_{\substack{\pi,\pi' \in A}} \mathbbm{1}\left[\pi (\pi')^{-1} \in C_{n,n-\ell} \right]
    \\
    & =
    \frac{1}{\lvert S_n \rvert} \sum_{\sigma \in S_n} \sum_{\substack{\pi,\pi' \in A}}
    \mathbbm{1}\left[\sigma \pi (\pi')^{-1} \sigma^{-1} \in C_{n,n-\ell} \right]
    \\
    & =
    \lvert A \rvert^2 \lvert S_n \rvert \lvert C_{n,n-\ell} \rvert \langle \phi_{A}, \psi_{\ell} \rangle,
  \end{align*}
  where the second equality follows since $C_{n,n-\ell}$ is invariant under conjugation. Further, if we assume
  $\sum_{\lambda \vdash n} \chi^{\lambda}(\phi_{A}) \chi^{\lambda}(\psi_{\ell}) > 0$, then
  \cref{fact:parseval_sym} yields that $A$ contains an edge of $\cB_n$.
\end{proof}

The following two facts give basic properties about $\chi^{\lambda}(\phi_A)$.

\begin{restatable}[From KLR]{fact}{KlrNonNegChar}\label{fact:non_negativity_of_irrep}
  Let $\lambda \vdash n$. Then $\chi^{\lambda}(\phi_A) \ge 0$.
\end{restatable}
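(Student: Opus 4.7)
The plan is to unfold $\chi^{\lambda}(\phi_A)$ until it becomes a manifest sum of squares. First I would extend $\chi^{\lambda}$ linearly to $\RR[S_n]$ and write
\[
  \chi^{\lambda}(\phi_A) = \frac{1}{|S_n|\,|A|^2}\sum_{\sigma \in S_n}\sum_{\pi,\pi' \in A} \chi^{\lambda}(\sigma \pi (\pi')^{-1} \sigma^{-1}).
\]
Since $\chi^{\lambda}$ is a class function, the inner character value equals $\chi^{\lambda}(\pi(\pi')^{-1})$ independently of $\sigma$, so the average over $\sigma$ is trivial and collapses to
\[
  \chi^{\lambda}(\phi_A) = \frac{1}{|A|^2}\sum_{\pi,\pi' \in A}\chi^{\lambda}(\pi(\pi')^{-1}).
\]

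Next I would fix a unitary realization $\rho^{\lambda} \colon S_n \to \mathrm{GL}(S^{\lambda})$ of the Specht module, obtained by averaging any positive-definite inner product on $S^{\lambda}$ over the (finite) group $S_n$, so that $\rho^{\lambda}((\pi')^{-1}) = \rho^{\lambda}(\pi')^{*}$. Setting $X \coloneqq \sum_{\pi \in A}\rho^{\lambda}(\pi)$, the identity $\chi^{\lambda}(g) = \tr(\rho^{\lambda}(g))$ then yields
\[
  \sum_{\pi,\pi' \in A}\chi^{\lambda}(\pi(\pi')^{-1}) = \tr\!\left(\sum_{\pi \in A}\rho^{\lambda}(\pi)\cdot\sum_{\pi' \in A}\rho^{\lambda}(\pi')^{*}\right) = \tr(X X^{*}) = \lVert X\rVert_F^2 \ge 0.
\]
Dividing by $|A|^2$ finishes the proof.

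There is essentially no hard step here: the argument is the standard ``positive-definite autocorrelation'' observation for group Fourier coefficients, in the spirit of Parseval's identity (\cref{fact:parseval_sym}) applied with $\phi = \psi$. The only technical ingredient is the existence of the unitary realization of $\rho^{\lambda}$, which is routine for any finite-dimensional representation of a finite group; everything else is linear algebra combined with the class-function property of $\chi^{\lambda}$.
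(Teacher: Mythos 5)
Your proof is correct and is essentially the paper's argument: both reduce $\chi^{\lambda}(\phi_A)$ to $\tr(X X^*)$ (equivalently $\lVert X\rVert_F^2$) for $X=\sum_{\pi\in A}\rho^{\lambda}(\pi)$ using a norm-preserving realization of $S^{\lambda}$. The only cosmetic difference is that you average away a unitary realization over $\CC$, while the paper uses an orthogonal realization over $\RR$ and writes $S^{\lambda}(\pi')^{\top}$ in place of $\rho^{\lambda}(\pi')^{*}$.
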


\begin{fact}[Character folding]\label{fact:folding_var_phi}
  Let $A \subseteq S_n$. If all permutations of $A$ have the same sign, then
  \begin{align*}
    \chi^{\lambda^{\top}}(\phi_A) & = \chi^{\lambda}(\phi_A).
  \end{align*}
\end{fact}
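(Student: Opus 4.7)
The plan is to unfold the definition of $\chi^\lambda(\phi_A)$ using that $\chi^\lambda$ is a class function, and then apply \cref{fact:sign_of_transpose} termwise together with the sign-uniformity hypothesis on $A$.

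More concretely, I would first extend $\chi^\lambda$ linearly and write
\begin{align*}
  \chi^\lambda(\phi_A)
  & =
  \frac{1}{\lvert S_n\rvert\lvert A\rvert^2}
  \sum_{\sigma\in S_n}\sum_{\pi,\pi'\in A} \chi^\lambda(\sigma\pi(\pi')^{-1}\sigma^{-1}).
\end{align*}
Since $\chi^\lambda$ is a class function, the summand is independent of $\sigma$, so the $\sigma$-sum produces a factor of $\lvert S_n\rvert$ that cancels and leaves
\begin{align*}
  \chi^\lambda(\phi_A)
  & =
  \frac{1}{\lvert A\rvert^2}\sum_{\pi,\pi'\in A}\chi^\lambda(\pi(\pi')^{-1}).
\end{align*}
The same simplification applies to $\chi^{\lambda^\top}(\phi_A)$.

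Next, I would invoke \cref{fact:sign_of_transpose} on each term: $\chi^{\lambda^\top}(\pi(\pi')^{-1}) = \sgn(\pi(\pi')^{-1})\,\chi^\lambda(\pi(\pi')^{-1})$. Because $\sgn$ is a group homomorphism, $\sgn(\pi(\pi')^{-1}) = \sgn(\pi)\sgn(\pi')$, and the hypothesis that every element of $A$ has the same sign forces this product to equal $+1$ for all $\pi,\pi'\in A$. Hence $\chi^{\lambda^\top}(\pi(\pi')^{-1}) = \chi^\lambda(\pi(\pi')^{-1})$ termwise, and summing over $\pi,\pi'\in A$ gives the desired identity.

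There is no real obstacle: the argument is essentially a one-line reduction once the class-function invariance has been used to strip off the conjugation by $\sigma$. The only subtlety worth flagging is the observation that $\sgn(\pi(\pi')^{-1}) = \sgn(\pi)\sgn(\pi')$ depends on \emph{both} $\pi$ and $\pi'$, so the uniform-sign assumption is used in its natural pairwise form rather than on a single element.
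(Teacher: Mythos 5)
Your proof is correct and matches the paper's argument: the paper's one-line proof also applies \cref{fact:sign_of_transpose} termwise after noting that $\sgn(\sigma\pi(\pi')^{-1}\sigma^{-1}) = \sgn(\pi)\sgn(\pi') = 1$ under the uniform-sign hypothesis. Your version merely makes the expansion of $\chi^\lambda(\phi_A)$ and the cancellation of the conjugation by $\sigma$ explicit.
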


\begin{proof}
  It follows from \cref{fact:sign_of_transpose} and $\sgn(\sigma \pi (\pi')^{-1} \sigma^{-1}) = \sgn(\pi)
  \sgn(\pi') = 1$ for every $\pi,\pi' \in A$ and every $\sigma \in S_n$.
\end{proof}

To obtain an upper bound on $\chi^{\lambda}(\phi_A)$, we will use a notion of pseudorandomness for the set
$A$.

\begin{definition}[Pseudorandomness]
 Let $k \in [n]$ and $r > 0$. We say that a non-empty $A \subseteq S_n$ is \emph{$(k, r)$-pseudorandom} if for
 every $I,J \in [n]_{k}$, we have
 \begin{align*}
   \Pr_{\pi \in A}[\pi(J) = I] & < \frac{r}{(n)_k},
 \end{align*}
 where $\pi \in A$ is chosen uniformly at random.

 For $\theta \colon \NN_+ \to (0,\infty)$ a non-decreasing function, we say that $A \subseteq S_n$ is
 \emph{$\theta$-pseudorandom} if $A$ is $(k,\theta(k))$-pseudorandom for every \emph{even} $k \in [n]$.
\end{definition}

We will typically use functions of the form $k \mapsto c^k$ for some fixed $c > 1$ and in this case abuse
notation by saying $c^k$-pseudorandom.

The pseudorandomness condition implies the following upper bound on Young module characters.

\begin{restatable}[Implicit in KLR]{claim}{KlrYoundModuleBound}\label{claim:young_module_wp_bound}
  If $A$ is $(k,r)$-pseudorandom, then
  \begin{align*}
    \tr(M^{h^n_k}(\phi_{A})) & \le r.
  \end{align*}
\end{restatable}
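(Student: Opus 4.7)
The plan is to unfold the trace combinatorially and then convert pseudorandomness (a bound on a probability) into a bound on a collision count. The starting observation is that $M^{h^n_k}$ is the permutation module on $[n]_k$, so for every $\sigma\in S_n$, $\tr(M^{h^n_k}(\sigma))$ equals the number of $k$-tuples $I\in[n]_k$ fixed pointwise by $\sigma$; by linear extension, the same counting interpretation applies to any element of $\RR[S_n]$.

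I would then expand $\phi_A$ using its definition. Since the trace of a representation is invariant under conjugation in that representation, the outer average over $\sigma\in S_n$ collapses, giving
\[
\tr(M^{h^n_k}(\phi_A)) = \frac{1}{|A|^2}\sum_{\pi,\pi'\in A}\tr(M^{h^n_k}(\pi(\pi')^{-1})).
\]
The next step is to reinterpret each summand combinatorially: $I$ is fixed by $\pi(\pi')^{-1}$ precisely when $\pi^{-1}(I)=(\pi')^{-1}(I)$, so the substitution $J=\pi^{-1}(I)$ puts the summand into the ``collision'' form
\[
\tr(M^{h^n_k}(\pi(\pi')^{-1})) = |\{J\in[n]_k:\pi(J)=\pi'(J)\}|.
\]

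Swapping the order of summation over $J$ and over pairs $(\pi,\pi')$, and introducing $N_I(J)\coloneqq|\{\pi\in A:\pi(J)=I\}|$, the inner count for each fixed $J$ becomes $\sum_{I\in[n]_k} N_I(J)^2$. I would bound this quadratic by the elementary estimate $\sum_I N_I(J)^2 \le (\max_I N_I(J))\cdot \sum_I N_I(J) = (\max_I N_I(J))\cdot|A|$ — this is the natural gateway for applying pseudorandomness, which says exactly that $N_I(J)/|A| < r/(n)_k$ for every $I$. Assembling the pieces, the inner contribution of each $J$ is at most $r|A|^2/(n)_k$; averaging over the $(n)_k$ choices of $J$ and dividing by $|A|^2$ produces the claimed bound.

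There is really no substantial obstacle here: the proof is essentially the correct combinatorial reading of the trace, followed by the standard ``$L^2\le L^\infty\cdot L^1$'' estimate. The only mildly non-obvious step is recognizing the permutation-action reformulation of the trace as a collision count between the maps $J\mapsto \pi(J)$ and $J\mapsto \pi'(J)$ — once that identification is in hand, the pseudorandomness hypothesis plugs in directly and yields the bound.
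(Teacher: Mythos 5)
Your proof is correct and is essentially the paper's own argument, rephrased combinatorially: the paper writes $\tr(M^{h^n_k}(\phi_A)) = \tr(M^{h^n_k}(\xi)M^{h^n_k}(\xi)^\top) = \sum_{I,J}M^{h^n_k}(\xi)_{I,J}^2$ with $\xi = \tfrac{1}{|A|}\sum_{\pi\in A}\pi$, which is exactly your $\tfrac{1}{|A|^2}\sum_J\sum_I N_I(J)^2$, and the paper's bound $M^{h^n_k}(\xi)_{I,J}^2 \le \tfrac{r}{(n)_k}M^{h^n_k}(\xi)_{I,J}$ followed by summing columns is the same $L^\infty$-times-$L^1$ estimate you apply. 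No gap; the two write-ups differ only in whether the entries of the averaged permutation matrix are viewed as probabilities or as normalized collision counts.
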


An arbitrary non-trivial character can be bounded in terms of the pseudorandomness parameter and an
appropriate Kostka number as follows.

\begin{restatable}[Implicit in KLR]{lemma}{KlrIrrepCharBound}\label{lemma:phi_lambda_bound}
  Let $\lambda \vdash n$ be non-trivial (i.e., $\lambda \ne (1^n)$). If $A \subseteq S_n$ is
  $(k,r)$-pseudorandom and $K_{\lambda,h^n_k} \ne 0$, then
  \begin{align*}
    \chi^{\lambda}(\phi_A) & \le \frac{r-1}{K_{\lambda,h^n_k}}.
  \end{align*}
\end{restatable}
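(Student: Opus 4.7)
The plan is to combine the trace bound on the Young module $M^{h^n_k}$ from \cref{claim:young_module_wp_bound} with Young's Rule, and then isolate the trivial isotypic component using the non-negativity of irreducible characters on $\phi_A$.

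First, I would apply Young's Rule (\cref{theo:young_decomp}) to decompose $M^{h^n_k} \simeq \bigoplus_{\mu \vdash n} K_{\mu, h^n_k}\, S^\mu$. Since characters are additive on direct sums, applying the character to $\phi_A$ (viewed as an element of $\RR[S_n]$) gives
\begin{align*}
  \tr(M^{h^n_k}(\phi_A)) = \sum_{\mu \vdash n} K_{\mu, h^n_k}\, \chi^\mu(\phi_A),
\end{align*}
and by \cref{claim:young_module_wp_bound} the left-hand side is at most $r$.

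Next, I would peel off the contribution of the trivial partition $(n)$. The unique semi-standard tableau of shape $(n)$ (a single row) and content $(n-k, 1^k)$ is $1\,1\,\cdots\,1\,2\,3\,\cdots\,(k+1)$, so $K_{(n), h^n_k} = 1$. Moreover, the trivial character is identically $1$, and summing the coefficients of $\phi_A$ shows
\begin{align*}
  \chi^{(n)}(\phi_A) = \frac{1}{\lvert S_n\rvert\,\lvert A\rvert^2} \sum_{\sigma \in S_n,\, \pi,\pi' \in A} 1 = 1.
\end{align*}
Hence the $(n)$ term contributes exactly $1$ to the sum.

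Finally, by \cref{fact:non_negativity_of_irrep} every summand $K_{\mu, h^n_k}\, \chi^\mu(\phi_A)$ is non-negative, so for any $\lambda$ (distinct from the trivial partition) with $K_{\lambda, h^n_k} \ne 0$ we can isolate its term:
\begin{align*}
  K_{\lambda, h^n_k}\, \chi^\lambda(\phi_A)
  \le \sum_{\mu \neq (n)} K_{\mu, h^n_k}\, \chi^\mu(\phi_A)
  \le r - 1,
\end{align*}
and dividing by $K_{\lambda, h^n_k}$ yields the claimed bound. No step is technically hard; the whole argument is a ``remove the trivial isotypic component'' computation, and the main things to check carefully are the two elementary identities $K_{(n), h^n_k} = 1$ and $\chi^{(n)}(\phi_A) = 1$. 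The real representation-theoretic work is already packaged into Young's Rule and \cref{claim:young_module_wp_bound}.
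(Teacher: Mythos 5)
Your proof is correct and follows essentially the same route as the paper: apply Young's Rule to expand $\tr(M^{h^n_k}(\phi_A))$, bound it by $r$ via \cref{claim:young_module_wp_bound}, peel off the contribution of the trivial isotypic component, and use non-negativity of the remaining summands to isolate the $\lambda$-term.

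One difference worth flagging: the paper's printed proof writes ``$\chi^{(1^n)}(\phi_A) = K_{(1^n),h^n_k} = 1$,'' which is a typo for $(n)$ --- indeed $K_{(1^n),h^n_k} = 0$ whenever $k < n-1$ (a strictly increasing single column cannot hold $n-k \ge 2$ copies of $1$), and $\chi^{(1^n)}(\phi_A) = \bigl(\tfrac{1}{|A|}\sum_{\pi\in A}\sgn\pi\bigr)^2$ need not equal $1$ without a same-sign hypothesis. Your computation correctly identifies $(n)$ as the partition with $K_{(n),h^n_k}=1$ and $\chi^{(n)}(\phi_A)=1$. Note that as a consequence your argument, like the paper's, actually requires $\lambda \ne (n)$ rather than the stated $\lambda \ne (1^n)$; the hypothesis in the lemma statement appears to carry the same typo. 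You might make this correction explicit rather than silently substituting $(n)$ for ``the trivial partition.''
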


\section{Theoretical Proofs}\label{sec:theo_proofs}

Recall from \cref{sec:strategy} that our proof strategy is to use a density increment argument to construct
from a sufficiently large $A \subseteq S_n$ a pseudorandom set $C \subseteq S_{n'}$ containing at most as many
edges as $A$ in the Birkhoff graph $\cB_n$. Translating edge counting representation theoretic arguments to
linear programming, we will be able to deduce that pseudorandom sets are not independent, which in turn
implies that large $A \subseteq S_n$ are also not independent. Therefore, quantifying how large $A$ needs to
be to make this approach viable provides upper bounds on $\alpha(\cB_n)$. The density increment argument is
formalized in \cref{subsec:density_inc}, whereas linear programming arguments are formally treated in
\cref{subsec:lp}.

\subsection{Dichotomy: Structure vs Randomness}\label{subsec:density_inc}

We shed more light on the structure versus randomness dichotomy of~\cite{lovett17} by observing a density
increment phenomenon. For $A \subseteq S_n$, let $d_n(A) \coloneqq \lvert A \rvert / \lvert S_n \rvert$ be its
\emph{density}. The lemma below shows that if $A$ fails to be $(k,r)$-pseudorandom, then we can find a set $C$
in $S_{n-k}$ with larger density than $A$ and with at most as many edges as $A$.

\begin{lemma}[density increment step]\label{lemma:density_increment}
    If $A \subseteq S_n$ non-empty is not $(k,r)$-pseudorandom, then there exist $\sigma,\sigma' \in S_n$ and
    $B' \subseteq A$ such that $B \coloneqq \sigma B' \sigma'$ satisfies
    \begin{enumerate}
    \item For every $\tau \in B$ and every $i \in [n]\setminus[n-k]$, we have $\tau(i) = i$,
      \label{it:density_inc_1}
    \item $\lvert B \rvert \ge \lvert A \rvert\cdot r/(n)_k$.
      \label{it:density_inc_2}
    \end{enumerate}
    In particular, by letting $C \coloneqq \{\tau\vert_{[n-k]}\mid\tau \in B \} \subseteq S_{n-k}$, we have
    $d_{n-k}(C) \ge r \cdot d_n(A)$ and $\lvert E_{\cB_{n-k}}(C,C)\rvert \le \lvert E_{\cB_{n}}(A,A)\rvert$.
    Furthermore, if all permutations of $A$ have the same sign then all permutations of $C$ also have the same
    sign.
\end{lemma}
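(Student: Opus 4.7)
The plan is to exploit the failure of pseudorandomness to extract a dense subset of $A$ whose elements all share a common behavior on some fixed $k$-tuple, then apply left/right multiplication to normalize this shared behavior into fixing the last $k$ coordinates.

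First, unpacking the definition, failure of $(k,r)$-pseudorandomness gives us $I = (i_1,\ldots,i_k)$ and $J = (j_1,\ldots,j_k)$ in $[n]_k$ with $\Pr_{\pi\in A}[\pi(J) = I] \ge r/(n)_k$. I set
\[
  B' \coloneqq \{\pi \in A : \pi(j_s) = i_s \text{ for all } s \in [k]\},
\]
which immediately satisfies $\lvert B' \rvert \ge \lvert A \rvert \cdot r/(n)_k$. Next I choose any $\sigma' \in S_n$ with $\sigma'(n-k+s) = j_s$ and any $\sigma \in S_n$ with $\sigma(i_s) = n-k+s$ for every $s \in [k]$ (extending arbitrarily on the remaining $n-k$ points). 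Then for each $\pi \in B'$ and each $s \in [k]$,
\[
  (\sigma \pi \sigma')(n-k+s) \;=\; \sigma(\pi(j_s)) \;=\; \sigma(i_s) \;=\; n-k+s,
\]
so $B \coloneqq \sigma B' \sigma'$ satisfies item \ref{it:density_inc_1}. Since conjugation/translation by fixed group elements is a bijection of $S_n$, $\lvert B \rvert = \lvert B' \rvert$, giving item \ref{it:density_inc_2}.

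For the ``in particular'' clause, every $\tau \in B$ pointwise fixes $[n]\setminus[n-k]$, so the restriction map $\tau \mapsto \tau\vert_{[n-k]}$ is a bijection $B \to C$; dividing $\lvert C \rvert = \lvert B \rvert \ge r \lvert A \rvert/(n)_k$ by $(n-k)!$ gives $d_{n-k}(C) \ge r \cdot d_n(A)$. For the edge count, by \cref{remark:birkhoff_auto} both left multiplication by $\sigma$ and right multiplication by $\sigma'$ are automorphisms of $\cB_n$, so $\lvert E_{\cB_n}(B,B)\rvert = \lvert E_{\cB_n}(B',B')\rvert \le \lvert E_{\cB_n}(A,A)\rvert$. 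Moreover, if $\tau_1,\tau_2 \in B$, then $\tau_1 \tau_2^{-1}$ fixes $[n]\setminus[n-k]$ pointwise, so it is a single cycle in $\cC_n$ if and only if $\tau_1\vert_{[n-k]}(\tau_2\vert_{[n-k]})^{-1}$ is a single cycle in $\cC_{n-k}$; hence edges in $C$ are in one-to-one correspondence with edges in $B$, yielding $\lvert E_{\cB_{n-k}}(C,C)\rvert \le \lvert E_{\cB_n}(A,A)\rvert$. Finally, $\sgn(\sigma\pi\sigma') = \sgn(\sigma)\sgn(\sigma')\sgn(\pi)$, so $B$ has uniform sign whenever $A$ does, and since the elements of $B$ fix $[n]\setminus[n-k]$ the sign is preserved by the restriction to $[n-k]$, so $C$ inherits uniform sign as well.

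No step looks genuinely hard; the only place to be careful is the edge-preservation clause, where one must verify that a single cycle in $S_n$ that pointwise fixes a set of size $k$ restricts to a single cycle in $S_{n-k}$ (and vice versa). This is immediate from the cycle decomposition, but it is the one place where the specific structure guaranteed by item \ref{it:density_inc_1} is essential, so I would make sure to highlight it explicitly in the write-up.
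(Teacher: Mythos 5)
Your proof is correct and follows essentially the same route as the paper's: extract the dense subset $B'$ fixing a common $I,J$ pair, conjugate/translate by $\sigma,\sigma'$ to normalize into the last-$k$-fixing form, and pass to the restriction $C$. The only (harmless) difference is that you observe the restriction actually gives a bijection between $E_{\cB_n}(B,B)$ and $E_{\cB_{n-k}}(C,C)$, whereas the paper only records the one inequality it needs.
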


\begin{proof}
    Since $A$ is not $(k,r)$-pseudorandom, there exist $I,J \in [n]_k$ such that
    \begin{align*}
      \Pr_{\pi \in A}[\pi(J) = I] & \ge \frac{r}{(n)_k}.
    \end{align*}
    Let $B' \coloneqq \{\pi \in A \mid \pi(J) = I \}$. Take any $\sigma,\sigma' \in S_n$ such that
    $\sigma'(n-k+1,\ldots,n) = J$ and $\sigma(I) = (n-k+1,\ldots,n)$. Then $B\coloneqq \sigma B' \sigma'$
    satisfies~\ref{it:density_inc_1} and we have
    \begin{align*}
      \frac{r}{(n)_k}
      & \le
      \Pr_{\pi \in A}[\pi(J) = I]
      =
      \frac{\lvert B' \rvert}{\lvert A \rvert}
      =
      \frac{\lvert B \rvert}{\lvert A \rvert};
    \end{align*}
    thus item~\ref{it:density_inc_2} follows.

    Note that item~\ref{it:density_inc_1} implies that $C \subseteq S_{n-k}$ and
    \begin{align*}
      r \cdot d_n(A)
      & =
      \frac{r \cdot \lvert A \rvert}{\lvert S_n \rvert}
      =
      \frac{r \cdot \lvert A \rvert}{(n)_k \cdot \lvert S_{n-k}\rvert}
      \le
      \frac{\lvert B \rvert}{\lvert S_{n-k} \rvert}
      =
      \frac{\lvert C \rvert}{\lvert S_{n-k} \rvert}
      =
      d_{n-k}(C),
    \end{align*}
    i.e., $C$ is at least $r$ times denser than $A$. Furthermore, by \cref{remark:birkhoff_auto} we have
    $\lvert E_{\cB_{n}}(B,B) \rvert = \lvert E_{\cB_{n}}(B',B') \rvert \le \lvert E_{\cB_{n}}(A,A) \rvert$.
    The assertions about $C$ follow from the fact that the restriction map $f \colon \tau \mapsto
    \tau\vert_{[n-k]}$ is a bijection from $B$ to $C$ and if $\{\tau,\tau'\} \in E_{\cB_{n-k}}(C,C)$ then
    $\{f^{-1}(\tau),f^{-1}(\tau')\} \in E_{\cB_n}(B,B)$. Finally, if all permutations of $A$ have the same
    sign, say $s$, then trivially so do all permutations in $B'$. By construction, the sign of all
    permutations in $B$ and in $C$ is $\sgn(\sigma) \cdot s \cdot \sgn(\sigma')$.
\end{proof}

The following simple claim shows that we can pass from $A \subseteq S_n$ to $C \subseteq S_{n-k}$ without
decreasing the density. In particular, this allow us to adjust the degree $n'$ so that the $n'$ cycles are
even permutations.

\begin{corollary}[density preserval]\label{cor:density_preserval}
    If $A \subseteq S_n$ is non-empty and $k \in [n]$, then there exist $\sigma,\sigma' \in S_n$ and $B'
    \subseteq A$ such that $B\coloneqq\sigma B' \sigma'$ satisfies
    \begin{enumerate}
      \item For every $\tau \in B$ and every $i \in [n]\setminus[n-k]$, we have $\tau(i) = i$,
      \item $\lvert B \rvert \ge \lvert A \rvert/(n)_k$.
    \end{enumerate}
    In particular, by letting $C \coloneqq \{\tau\vert_{[n-k]}\mid\tau \in B \} \subseteq S_{n-k}$, we have
    $d_{n-k}(C) \ge d_n(A)$ and $\lvert E_{\cB_{n-k}}(C,C)\rvert \le \lvert
    E_{\cB_{n}}(A,A)\rvert$. Furthermore, if every permutation of $A$ has the same sign then every permutation
    of $C$ has the same sign.
\end{corollary}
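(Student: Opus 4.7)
The plan is to derive this corollary directly from the density increment step (\cref{lemma:density_increment}) by applying it with $r = 1$, after first verifying that any non-empty $A \subseteq S_n$ automatically fails to be $(k, 1)$-pseudorandom whenever $k \in [n]$.

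The key observation is a one-line pigeonhole argument. Fix any $J \in [n]_k$. Since $\pi(J) \in [n]_k$ for every $\pi \in S_n$, the probabilities $\Pr_{\pi \in A}[\pi(J) = I]$ sum to $1$ as $I$ ranges over the $(n)_k$ elements of $[n]_k$, so by averaging at least one $I$ attains
\[
  \Pr_{\pi \in A}[\pi(J) = I] \ge \frac{1}{(n)_k}.
\]
Because the pseudorandomness definition requires the strict inequality $\Pr_{\pi \in A}[\pi(J) = I] < r/(n)_k$ for \emph{all} $I, J$, this shows that $A$ is not $(k, 1)$-pseudorandom.

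With this in hand, invoking \cref{lemma:density_increment} with $r = 1$ immediately produces $\sigma, \sigma' \in S_n$ and $B' \subseteq A$ such that $B \coloneqq \sigma B' \sigma'$ satisfies the fixed-point condition of item~(i) and $\lvert B \rvert \ge \lvert A \rvert / (n)_k$, which is item~(ii). All of the ``In particular'' conclusions about $C$---the density bound $d_{n-k}(C) \ge d_n(A)$, the edge inequality $\lvert E_{\cB_{n-k}}(C, C)\rvert \le \lvert E_{\cB_n}(A, A)\rvert$, and preservation of the common sign of permutations---transfer verbatim from the corresponding clause of \cref{lemma:density_increment} specialized to $r = 1$. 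There is no meaningful obstacle here: the entire content of the corollary is the pigeonhole inequality above, with all construction and bookkeeping having already been carried out in \cref{lemma:density_increment}.
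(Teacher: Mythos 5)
Your proposal is correct and takes essentially the same route as the paper, which proves the corollary by observing that every non-empty $A$ fails to be $(k,1)$-pseudorandom and then invokes \cref{lemma:density_increment}. You simply spell out the averaging/pigeonhole step that the paper leaves implicit.
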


\begin{proof}
  Follows from \cref{lemma:density_increment} by noting that every $A \subseteq S_n$ is not
  $(k,1)$-pseudorandom.
\end{proof}

\begin{lemma}[density increment]\label{lemma:full_density_increment}
  For every $c_0 > c \ge 1$, every $n \in \NN_+$ and $A \subseteq S_n$ with $d_n(A) \ge 1/c^n$, there exists a
  set $B \subseteq S_{m}$, where $m \ge (1 - \log_{c_0}(c)) n$ and $m \equiv n \pmod{2}$ such that
  \begin{enumerate}
    \item $B$ is $c_0^k$-pseudorandom,
    \item $\lvert E_{\cB_m}(B,B) \rvert \le \lvert E_{\cB_n}(A,A) \rvert$,
    \item $d_m(B) \ge d_n(A)$, and
    \item if all permutations in $A$ have the same sign, then all permutations in $B$
      have the same sign.
  \end{enumerate}
\end{lemma}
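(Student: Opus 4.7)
The plan is to iterate the density-increment step \cref{lemma:density_increment} until the resulting set is $c_0^k$-pseudorandom, using the fact that density can never exceed $1$ to control how many iterations can occur.

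Formally, I would build a finite sequence of sets $A_i \subseteq S_{n_i}$ as follows. Set $A_0 := A$ and $n_0 := n$. At each stage $i$, if $A_i$ is $(k, c_0^k)$-pseudorandom for every even $k \in [n_i]$ (in particular if $n_i \le 1$, in which case the condition is vacuous), stop and output $B := A_i$, $m := n_i$. Otherwise, pick an even $k_i$ for which pseudorandomness fails and apply \cref{lemma:density_increment} with parameters $(k_i, c_0^{k_i})$ to obtain $A_{i+1} \subseteq S_{n_{i+1}}$ with $n_{i+1} := n_i - k_i$, then continue. Since each $k_i$ is even, $n_i \equiv n \pmod{2}$ throughout, so in particular $m \equiv n \pmod{2}$ at termination.

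The density cap forces termination and gives the promised lower bound on $m$. Indeed, \cref{lemma:density_increment} guarantees $d_{n_{i+1}}(A_{i+1}) \ge c_0^{k_i} \cdot d_{n_i}(A_i)$, so telescoping yields
\begin{align*}
  d_{n_t}(A_t) & \ge c_0^{n - n_t} \cdot d_n(A) \ge \frac{c_0^{n - n_t}}{c^n}
\end{align*}
at every stage $t$. Since densities are bounded by $1$, we must have $c_0^{n - n_t} \le c^n$, i.e., $n_t \ge (1 - \log_{c_0}(c))\, n$. Hence the iteration must terminate in pseudorandomness while $n_t$ is still above this threshold, producing the desired $m$.

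Properties (i)--(iv) then follow by chaining \cref{lemma:density_increment} across all stages: (i) is the termination criterion; (ii) is the edge-count inequality composed along the iteration; (iii) is immediate because $c_0 \ge 1$ makes the density sequence non-decreasing; and (iv) holds because \cref{lemma:density_increment} preserves the same-sign property at each step. I do not expect any real obstacle beyond verifying that the iteration is well-defined and that the parities and inequalities compose cleanly; the density-ceiling argument makes the lower bound on $m$ essentially automatic.
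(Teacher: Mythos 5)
Your proposal matches the paper's proof essentially verbatim: you iterate the single density-increment step with even $k_i$'s until $c_0^k$-pseudorandomness holds, you use the telescoped bound $d_{n_t}(A_t)\ge c_0^{n-n_t}/c^n$ together with the ceiling $d_{n_t}(A_t)\le 1$ to force termination and the lower bound $m\ge(1-\log_{c_0}(c))n$, and you carry the edge-count, density, parity, and sign properties along the chain. This is the same algorithm, same telescoping, and same termination argument as in the paper; there are no gaps.
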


\begin{proof}
  We construct inductively $A_0 \subseteq S_{n_0}, A_1 \subseteq S_{n_1}, \ldots$ through the following
  algorithm.
  \begin{enumerate}[label={\arabic*.}]
  \item Set $n_0 \coloneqq n$ and $A_0 \coloneqq A$,
  \item Given $A_t \subseteq S_{n_t}$, if $A_t$ is $c_0^k$-pseudorandom, stop and set $T \coloneqq t$ and $B
    \coloneqq A_T$; otherwise, let $k_t$ be such that $A_t$ is not $(k_t,c_0^{k_t})$-pseudorandom with $k_t$
    even, let $n_{t+1} \coloneqq n_t - k_t$ and by \cref{lemma:density_increment}, let $A_{t+1} \subseteq
    S_{n_{t+1}}$ be such that
    \begin{enumerate}
    \item $d_{n_{t+1}}(A_{t+1}) \ge c_0^{k_{t}} \cdot d_{n_t}(A_t)$,
    \item $\lvert E_{\cB_{n_{t+1}}}(A_{t+1},A_{t+1})\rvert \le \lvert E_{\cB_{n_t}}(A_t,A_t) \rvert$,
    \item if all permutations in $A_t$ in all have the same sign, then all permutations in $A_{t+1}$ have the
      same sign.
    \end{enumerate}
  \end{enumerate}
  We claim that the above procedure stops. Indeed, using induction, for every $t$ such that $A_t$ is
  constructed we have
  \begin{align*}
    1
    & \ge
    d_{n_t}(A_t)
    \ge
    c_0^{\sum_{i=0}^{t-1} k_i} \cdot d_n(A)
    \ge
    \frac{c_0^{\sum_{i=0}^{t-1} k_i}}{c^n},
  \end{align*}
  which implies that $t \le \sum_{i=0}^{t-1} k_i \le n \log_{c_0}(c)$. Hence, the claim follows.

  Let $k \coloneqq \sum_{i=0}^{T-1} k_i$ and $m \coloneqq n_T = n - k$ so that $m \ge (1 - \log_{c_0}(c))n$.
  By construction, the $k_i$'s are necessarily even, from which $m \equiv n \pmod{2}$. Finally, observe that
  $\lvert E_{\cB_m}(B,B)\rvert = \lvert E_{\cB_m}(A_T,A_T) \rvert \le \cdots \le \lvert E_{\cB_n}(A_0,A_0)
  \rvert = \lvert E_{\cB_n}(A,A) \rvert$ and analogously we also have $d_{m}(B) \ge d_n(A)$ since $c_0 \ge
  1$. By induction, we also have that if all permutations of $A=A_0$ have the same sign, then all permutations
  of $B=A_T$ have the same sign.
\end{proof}

% LocalWords:  pseudorandom

\subsection{Linear Programs}\label{subsec:lp}

We define three (families of) linear programs in order to establish an upper bound on $\alpha(\cB_n)$. The
first is a convex relaxation closely capturing the representation theoretic argument for counting edges of
pseudorandom sets of $S_n$ while the last one does not depend on $n$ and captures the asymptotic properties of
the Birkhoff graph family. Each such linear program can be roughly described as follows.

\begin{enumerate}[wide,label={Linear Program~\Roman*.}]
\item It ``contains'' all pseudorandom sets $A \subseteq S_n$ as feasible points and its objective value being
  positive implies that pseudorandom sets are not independent. This allows us to deduce an upper bound on
  $\alpha(\cB_n)$ using the density increment results of \cref{subsec:density_inc}. Its number of variables,
  its coefficients and its number of constraints depend on $n$.
\item Its objective value being positive implies that the first linear program has positive objective value.
  Its number of variables is independent of $n$, but its coefficients and its number of constraints depend on
  $n$.
\item Similarly, its objective value being positive implies that the linear program II has positive objective
  value for every $n$ sufficiently large. This third linear program is completely independent of $n$.
\end{enumerate}

\subsubsection{Linear Program I}

In the definition below, we present a family of linear programs such that each $c^k$-pseudorandom $A \subseteq
S_n$ yields a feasible solution. If further the objective value of such solution is positive, then we will
show this implies that $A$ is not independent. These properties of this family of linear programs are
established in the next two lemmas.

%%%%%%%%%%%%%%%%%%%%%%%%%%%%%%%%%%%%%%%%%%%%%%%%%%%%%%%%
%                   LP 1
%%%%%%%%%%%%%%%%%%%%%%%%%%%%%%%%%%%%%%%%%%%%%%%%%%%%%%%%

\begin{definition}[Linear Program I]\label{def:lp_1}
  Given an odd positive integer $n$, a real $c > 1$ and a non-negative even integer $\ell_0 \le n$, let
  $P_n^{\ell_0}(c)$ be the following linear program.
  \begin{align}
    \text{minimize}\quad
    &
    M
    \notag
    \\
    \text{s.t.}\quad
    &
    M \ge \Psi_{\ell} & \forall \ell \le \ell_0 \text{ even},
    \label{eq:lp:m}
    \\
    \text{(Parseval)}\quad
    &
    \Psi_{\ell} =  \sum_{\lambda \vdash n} \chi^{\lambda}((n-\ell)) \cdot x_{\lambda}
    &
    \forall \ell \le \ell_0 \text{ even},
    \label{eq:lp:parseval}
    \\
    \text{(Young)}\quad
    &
    \sum_{\lambda \vdash n} K_{\lambda,h_m^n} \cdot x_{\lambda} \le c^m
    &
    \forall m \le n \text{ even},
    \label{eq:lp:young}
    \\
    \text{(Transposition)}\quad
    &
    x_{\lambda} = x_{\lambda^{\top}}
    &
    \forall \lambda \vdash n,
    \label{eq:lp:transposition}
    \\
    \text{(Unit)}\quad
    &
    x_{(n)} = x_{(1^n)} = 1,
    \label{eq:lp:unit_1}
    \\
    &
    x_{\lambda} \ge 0
    &
    \forall \lambda \vdash n,
    \label{eq:lp:nonnegative}
  \end{align}
  where $\chi^{\lambda}((n-\ell))$ stands for $\chi^{\lambda}$ evaluated at a cycle of length $n-\ell$ and the
  variables are $M$, $(\Psi_{\ell})_{\ell}$ and $(x_{\lambda})_{\lambda \vdash n}$.
\end{definition}

\begin{lemma}\label{lemma:pseudo_rand_char_are_feasible}
  Let $n$ be an odd positive integer and $c > 1$. If $A \subseteq S_n$ is $c^k$-pseudorandom and all
  permutations of $A$ have the same sign, then taking $x_{\lambda} \coloneqq \chi^{\lambda}(\phi_A)$, defining
  $\Psi_{\ell}$ by~\eqref{eq:lp:parseval} and letting $M \coloneqq \max_{\ell} \Psi_{\ell}$ gives a feasible
  solution $P_n^{\ell_0}(c)$.
\end{lemma}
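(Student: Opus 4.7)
The plan is to verify each constraint of $P_n^{\ell_0}(c)$ in turn, drawing on the preliminary facts and the pseudorandomness bounds recalled from KLR. None of the individual checks requires new machinery; the content of the lemma is essentially that the linear program was designed to encode exactly what representation theory gives for pseudorandom sets.

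First, for the non-negativity constraint~\eqref{eq:lp:nonnegative}, the proposed assignment $x_\lambda = \chi^\lambda(\phi_A)$ satisfies $x_\lambda \ge 0$ immediately by \cref{fact:non_negativity_of_irrep}. The transposition constraint~\eqref{eq:lp:transposition} is precisely the statement of \cref{fact:folding_var_phi}, and this is the one place the same-sign hypothesis on $A$ is explicitly consumed.

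Next, for the unit constraint~\eqref{eq:lp:unit_1}, I would argue directly that $\chi^{(n)}(\phi_A) = 1$: the character $\chi^{(n)}$ is the trivial character, so its linear extension to $\RR[S_n]$ sends $\phi_A$ to the sum of its coefficients, which by inspection of the definition of $\phi_A$ equals $1$. The equality $x_{(1^n)} = 1$ then follows from the already verified transposition constraint applied to $(1^n)^\top = (n)$ (or equivalently, from \cref{fact:sign_of_transpose} together with the same-sign hypothesis). For the Young constraint~\eqref{eq:lp:young}, fix an even $m \le n$. The hypothesis that $A$ is $c^k$-pseudorandom means in particular that $A$ is $(m, c^m)$-pseudorandom, so \cref{claim:young_module_wp_bound} gives $\tr(M^{h_m^n}(\phi_A)) \le c^m$. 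Applying Young's Rule (\cref{theo:young_decomp}) and linearity of trace yields
\begin{align*}
  \tr(M^{h_m^n}(\phi_A))
  & =
  \sum_{\lambda \vdash n} K_{\lambda, h_m^n} \cdot \chi^\lambda(\phi_A)
  =
  \sum_{\lambda \vdash n} K_{\lambda, h_m^n} \cdot x_\lambda,
\end{align*}
which combined with the pseudorandomness bound is exactly~\eqref{eq:lp:young}.

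Finally, the Parseval equation~\eqref{eq:lp:parseval} is forced by the definition of $\Psi_\ell$ given in the lemma statement, and the constraint $M \ge \Psi_\ell$ from~\eqref{eq:lp:m} holds by the definition of $M$ as the maximum. Since each constraint reduces cleanly to a previously stated result, I do not anticipate any genuine obstacle; the only care needed is to track where the hypotheses (same sign, $c^k$-pseudorandomness for the specific even $k$'s appearing in the Young constraints) enter, and to note that the oddness of $n$ is not actually used in this lemma but will matter in later arguments that invoke it.
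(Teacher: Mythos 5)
Your proof is correct and follows essentially the same route as the paper: each LP constraint is verified in turn by invoking the same preliminary facts (non-negativity of $\chi^\lambda(\phi_A)$, character folding for the transposition constraint, Young's rule plus the pseudorandomness bound on $\tr(M^{h_m^n}(\phi_A))$ for the Young constraints, and a direct evaluation for the unit constraint). The one minor addition you make — spelling out why $\chi^{(n)}(\phi_A)=1$ and observing that the oddness of $n$ is not used in this lemma — is accurate but does not change the argument.
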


\begin{proof}
  By definition, constraints~\eqref{eq:lp:m} and~\eqref{eq:lp:parseval} are trivially satisfied. Note that
  $x_{\lambda} \ge 0$ by \cref{fact:non_negativity_of_irrep}, so the constraints~\eqref{eq:lp:nonnegative} are
  also satisfied.

  Now, we proceed to show that constraints~\eqref{eq:lp:young} are also satisfied. Since $A$ is
  $c^m$-pseudorandom, \cref{claim:young_module_wp_bound} gives that $\tr(M^{h^n_m}(\phi_A)) \le c^m$ for every
  even $m \in [n]$. Combining with Young's rule, \cref{theo:young_decomp}, we have
  \begin{align*}
    c^m
    & \ge
    \tr(M^{h^n_m}(\phi_A))
    =
    \sum_{\lambda \vdash n} K_{\lambda,\mu} \cdot \chi^{\lambda}(\phi_A),
  \end{align*}
  showing that constraints~\eqref{eq:lp:young} are satisfied.

  Since all the permutations of $A$ have the same sign and $\chi^{\lambda}(\phi_A) = \sum_{\pi,\pi' \in A}
  \chi^{\lambda}(\pi (\pi')^{-1})/\lvert A\rvert^2$, the transposition constraint~\eqref{eq:lp:transposition}
  follows from \cref{fact:folding_var_phi}. Finally, note that $\chi^{(n)}(\phi_A) = 1$ and
  $\chi^{(1^n)}(\phi_A) = 1$, where the latter follows from the transposition constraint
  \eqref{eq:lp:transposition}.
\end{proof}

\begin{lemma}\label{lemma:original_lp_pos}
  Let $n$ be an odd positive integer, $c > 1$. Suppose $\OPT(P_n^{\ell_0}(c)) > 0$. If $A \subseteq S_n$ is
  $c^k$-pseudorandom and all permutations of $A$ have the same sign, then $A$ is not independent in $\cB_n$.
\end{lemma}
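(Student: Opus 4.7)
The plan is a direct combination of \cref{lemma:pseudo_rand_char_are_feasible} with \cref{claim:parseval_to_edges}. First I would note that by \cref{lemma:pseudo_rand_char_are_feasible}, the assignment $x_\lambda \coloneqq \chi^\lambda(\phi_A)$, with $\Psi_\ell$ determined by the Parseval constraint~\eqref{eq:lp:parseval} and $M \coloneqq \max_{\ell\le\ell_0\text{ even}} \Psi_\ell$, yields a feasible point of $P_n^{\ell_0}(c)$. Since $\OPT(P_n^{\ell_0}(c)) > 0$ and this is a minimization problem, the objective value at any feasible point is at least $\OPT$, so $M \ge \OPT(P_n^{\ell_0}(c)) > 0$. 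Hence there exists some even $\ell^\ast \le \ell_0$ with $\Psi_{\ell^\ast} > 0$.

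Next I would translate $\Psi_{\ell^\ast} > 0$ into the existence of an edge. By the Parseval constraint~\eqref{eq:lp:parseval},
\begin{align*}
0
& <
\Psi_{\ell^\ast}
=
\sum_{\lambda\vdash n}\chi^\lambda((n-\ell^\ast))\cdot \chi^\lambda(\phi_A)
=
\sum_{\lambda\vdash n}\chi^\lambda(\psi_{\ell^\ast})\cdot \chi^\lambda(\phi_A),
\end{align*}
using that $\chi^\lambda$ is a class function and $\psi_{\ell^\ast}$ is a uniform average over the conjugacy class of $(n-\ell^\ast)$-cycles, so that $\chi^\lambda(\psi_{\ell^\ast}) = \chi^\lambda((n-\ell^\ast))$. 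Provided $n-\ell^\ast \ge 2$ (which I would verify: since $n$ is odd, $\ell^\ast$ is even, and $\ell^\ast \le \ell_0 \le n-1$, so $n-\ell^\ast$ is an odd integer at least $1$, and in fact $\ge 3$ unless $\ell^\ast=n-1$; but $\ell^\ast$ is even and $n-1$ is even, which is the borderline case one should note and dispose of, e.g.\ by requiring $\ell_0\le n-2$ in the statement or by discarding that term as follows), the class function $\psi_{\ell^\ast}$ corresponds to a genuine set of cycles of $\cC_n$. Then \cref{claim:parseval_to_edges} applied with $\ell = \ell^\ast$ yields $|E_{\ell^\ast}[A,A]| > 0$, which means $A$ contains an edge of $\cB_n$.

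The only subtlety worth flagging is the corner case $n-\ell^\ast=1$ mentioned above; assuming the LP is set up so that the parities force $n-\ell \ge 2$ for every $\ell\le\ell_0$ indexing a constraint~\eqref{eq:lp:m} (which is the intended regime, since the $\Psi_\ell$ are designed to count $(n-\ell)$-cycle edges), there is no genuine obstacle. No new representation-theoretic input is needed beyond what has been recalled; the argument is essentially a one-line chain \emph{feasibility} $\Rightarrow$ \emph{$M \ge \OPT > 0$} $\Rightarrow$ \emph{some $\Psi_{\ell^\ast}>0$} $\Rightarrow$ \emph{some edge exists}.
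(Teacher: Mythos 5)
Your proof is correct and follows the same chain as the paper's: feasibility of $x_\lambda=\chi^\lambda(\phi_A)$ (via \cref{lemma:pseudo_rand_char_are_feasible}), hence $M\ge\OPT>0$, hence $\Psi_{\ell^\ast}>0$ for some $\ell^\ast$, hence an edge by \cref{claim:parseval_to_edges}. The corner case $n-\ell^\ast=1$ you flag is not addressed explicitly in the paper either; it is moot in the intended regime ($\ell_0$ a fixed even constant and $n$ large), which is the only setting where the lemma is invoked.
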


\begin{proof}
  By \cref{lemma:pseudo_rand_char_are_feasible}, setting $x_{\lambda} \coloneqq \chi^{\lambda}(\phi_A)$ and
  $\Psi_{\ell}$ and $M$ as in the lemma gives a feasible solution of $P_n^{\ell_0}(c)$, which must have a
  positive objective value since $\OPT(P_n^{\ell_0}(c)) > 0$. In particular, there exists an even integer
  $\ell \le \ell_0$ such that $\Psi_{\ell} > 0$, i.e.,
  \begin{align*}
    \sum_{\lambda \vdash n} \chi^{\lambda}(\phi_A) \cdot \chi^{\lambda}(\psi_{\ell}) & > 0.
  \end{align*}
  By \cref{claim:parseval_to_edges}, this implies that $A$ is not independent in $\cB_n$.
\end{proof}

Putting together the above two lemmas with the density increment results of \cref{subsec:density_inc}, we get
the following asymptotic upper bound on the independence number $\alpha(\cB_n)$.

\begin{proposition}\label{prop:up_bound_alpha_birk}
  Let $c_0 > 1$ and $\ell_0$ be a non-negative even integer. Suppose $n_0 \in \NN$ is such that for every $n
  \ge n_0$ odd, we have $\OPT(P_n^{\ell_0}(c_0)) > 0$. Then for every $c \in (1,c_0)$ and every integer $n
  \ge 1 + n_0/(1-\log_{c_0}(c))$,
  \begin{align*}
    \alpha(\cB_n) & \le 2\cdot \frac{n!}{c^{n-1}}.
  \end{align*}
\end{proposition}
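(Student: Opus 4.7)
The plan is to argue the contrapositive: given any $A \subseteq S_n$ with $\lvert A\rvert > 2n!/c^{n-1}$, I will exhibit an edge of $\cB_n$ inside $A$. The strategy chains the reductions of \cref{subsec:density_inc} to transform $A$ into an odd-degree, same-sign, pseudorandom set $B \subseteq S_m$, and then invokes \cref{lemma:original_lp_pos} together with the hypothesis $\OPT(P_m^{\ell_0}(c_0)) > 0$ to conclude that $B$ (and hence $A$) is not independent.

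First, partition $A = A^+ \sqcup A^-$ by sign; the larger part $A_0$ is still independent, has $d_n(A_0) > 1/c^{n-1}$, and has all its permutations of the same sign. Since $P_m^{\ell_0}(c_0)$ is defined only for odd $m$ but \cref{lemma:full_density_increment} preserves parity, I adjust parity up front using \cref{cor:density_preserval} with $k = 1$ when $n$ is even, obtaining a same-sign $A_1 \subseteq S_{n-1}$ with $d_{n-1}(A_1) \ge d_n(A_0)$ and $\lvert E_{\cB_{n-1}}(A_1,A_1)\rvert \le \lvert E_{\cB_n}(A_0,A_0)\rvert$; when $n$ is odd, I simply take $A_1 := A_0$. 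Writing $n_1$ for the ambient degree, $n_1$ is odd, $n_1 \ge n-1$, and since $c \ge 1$,
\begin{align*}
  d_{n_1}(A_1) \ge d_n(A_0) > \frac{1}{c^{n-1}} \ge \frac{1}{c^{n_1}}.
\end{align*}
Now apply \cref{lemma:full_density_increment} to $A_1$ with parameters $c_0 > c$: this yields an odd $m \ge (1-\log_{c_0}(c))\, n_1$ and a same-sign, $c_0^k$-pseudorandom $B \subseteq S_m$ satisfying $\lvert E_{\cB_m}(B,B)\rvert \le \lvert E_{\cB_{n_1}}(A_1,A_1)\rvert$. The proposition's hypothesis $n \ge 1 + n_0/(1-\log_{c_0}(c))$ gives $n_1 \ge n - 1 \ge n_0/(1-\log_{c_0}(c))$, and consequently $m \ge n_0$.

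By hypothesis, $\OPT(P_m^{\ell_0}(c_0)) > 0$, so \cref{lemma:original_lp_pos} applied to the pseudorandom same-sign $B$ forces $\lvert E_{\cB_m}(B,B)\rvert > 0$. Propagating back along the chain
\begin{align*}
\lvert E_{\cB_n}(A,A)\rvert \ge \lvert E_{\cB_n}(A_0,A_0)\rvert \ge \lvert E_{\cB_{n_1}}(A_1,A_1)\rvert \ge \lvert E_{\cB_m}(B,B)\rvert > 0,
\end{align*}
we conclude that $A$ contains an edge of $\cB_n$. The only nontrivial subtlety is the parity mismatch between the LP family (defined for odd $n$ only) and the density-increment lemma (which preserves parity); the single-step $k=1$ application of \cref{cor:density_preserval} resolves it cleanly, and this one-step correction is exactly what produces the additive $1$ in the threshold $1 + n_0/(1-\log_{c_0}(c))$ rather than the bare $n_0/(1-\log_{c_0}(c))$. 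Everything else is a routine assembly of the prior lemmas.
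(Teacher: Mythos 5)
Your proof is correct and takes essentially the same route as the paper's: restrict to the majority sign class, fix parity with one application of \cref{cor:density_preserval}, run \cref{lemma:full_density_increment} to reach a same-sign pseudorandom $B\subseteq S_m$ with $m\ge n_0$, and invoke \cref{lemma:original_lp_pos} to force an edge, which propagates back to $A$. The only cosmetic difference is that you frame it as a direct contrapositive rather than a proof by contradiction; the intermediate density bookkeeping (in particular the role of the additive $1$ in the degree threshold and the observation that $d_{n_1}(A_1)>1/c^{n_1}$ guarantees $B\neq\varnothing$, which \cref{lemma:original_lp_pos} implicitly needs) matches the paper's proof.
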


\begin{proof}
  Suppose that $A_0 \subseteq S_n$ is an independent set in $\cB_n$ with $\lvert A_0 \rvert \ge 2\cdot
  n!/c^{n-1}$, i.e., $d_n(A_0) \ge 2/c^{n-1}$. Let $s$ be the most frequent permutation sign in $A_0$ and $A_1
  \coloneqq \{\sigma \in A_0 \mid \sgn(\sigma)=s\}$. If $n$ is odd, let $A_2 \coloneqq A_1$ and $n_2
  \coloneqq n$. Otherwise, let $n_2 \coloneqq n-1$ and apply \cref{cor:density_preserval} to obtain a $A_2
  \subseteq S_{n_2}$. By construction, we have $n_2 \ge n-1$, $A_2$ is an independent set of $\cB_{n_2}$, all
  permutations of $A_2$ have the same sign and $d_{n_2}(A_2) \ge d_{n}(A_0)/2 \ge 1/c^{n_2}$.

  By \cref{lemma:full_density_increment}, we can find $B \subseteq S_m$ with $m \ge (1-\log_{c_0}(c))n_2 \ge
  n_0$ and such that
  \begin{enumerate}
  \item $B$ is $c_0^k$-pseudorandom,
  \item $B$ is independent in $\cB_m$,
  \item $d_m(B) \ge d_{n_2}(A_2) \ge 1/c^{n_2}$ (in particular $B$ is non-empty), and
  \item all permutations in $B$ have the same sign.
  \end{enumerate}

  Since $m \ge n_0$, we have $\OPT(P_m^{\ell_0}(c_0)) > 0$. Therefore, by \cref{lemma:original_lp_pos} the
  set $B$ must have an edge in $\cB_m$ contradicting the assumption that $A_0$ is independent.
\end{proof}

\subsubsection{Linear Program II}

The next step is to define a second family of simpler linear programs with the number of variables being
independent of $n$. To do so we classify partitions based on their leg length and belly as defined below and
we use some basic properties of irreducible characters and Kostka constants to remove variables with large leg
length or large belly. We will show in \cref{prop:lp_2_pos_implies_lp_1_pos} that when $n$ is sufficiently
large, a positive optimum value in this family implies a positive optimum value for the preceding family
$P_n^{\ell_0}(c)$.

\begin{notation}
  We denote the hook of size $n$ and leg length $k$ by $h^n_k \coloneqq (n-k,1^k)$. More generally, if
  $\beta\coloneqq(\beta_1,\beta_2,\ldots,\beta_t)$ is a partition and $n \ge \lvert \beta \rvert + k + \beta_1
  + 1$, then let
  \begin{align*}
    b^n_{k,\beta} & \coloneqq (n-\lvert\beta\rvert-k, \beta_1+1,\beta_2+1,\ldots,\beta_t+1,1^{k-t}).
  \end{align*}
  In this case, $\beta$ is called the \emph{belly} of $b^n_{k,\beta}$ and $k$ is called the \emph{leg
    length}. See \cref{fig:belly_shape} for a pictorial image of $b^n_{k,\beta}$.
\end{notation}

\begin{remark*}
  Note that $(h^n_k)^\top = h^n_{n-k-1}$ and $(b^n_{k,\beta})^\top =
  b^n_{n-\lvert\beta\rvert-k-1,\beta^\top}$.
\end{remark*}

\begin{figure}[!htbp]
  \centering
  \begin{tikzpicture}[scale=0.5, every node/.style={scale=0.5, fill=white, inner sep=0}]
    \draw  (-2.5,2.5) rectangle (-2,-3);
    \draw  (-2.5,3) rectangle (6.5,2.5);
    \draw (-2,-1) -- (-0.5,-1) -- (-0.5,0) -- (0,0) -- (0,1.5) -- (1,1.5) -- (1,2.5);
    \node at (-1,1) {\huge $\beta$};
    \node at (-3,0) {\huge $k$};
  \end{tikzpicture}
  \caption{Pictorial image of the shape $b^n_{k,\beta}$.}
  \label{fig:belly_shape}
\end{figure}
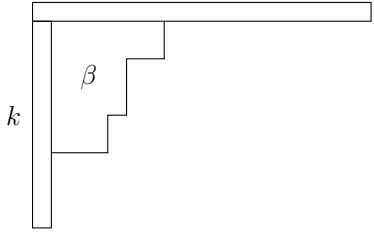

%%%%%%%%%%%%%%%%%%%%%%%%%%%%%%%%%%%%%%%%%%%%%%%%%%%%%%%%
%                   LP 2
%%%%%%%%%%%%%%%%%%%%%%%%%%%%%%%%%%%%%%%%%%%%%%%%%%%%%%%%

\begin{definition}[Linear Program II]\label{def:lp_2}
  Given a positive odd integer $n$, a real $c > 1$, a non-negative even integer $\ell_0 \le n$ and a positive odd
  integer $k_0$, we define $P_n^{\ell_0,k_0}(c)$ as follows.
  \begin{align}
    \text{minimize}\quad
    &
    M
    \notag
    \\
    \text{s.t.}\quad
    &
    M \ge \Psi_{\ell}
    &
    \forall \ell \le \ell_0 \text{ even},
    \label{eq:lp:m_trunc}
    \\
    \text{(Parseval)}\quad
    &
    \Psi_{\ell}
    =
    2 + 2 \cdot\sum_{i=0}^{\ell_0} \sum_{\beta \vdash i} \sum_{\substack{k=\htt(\beta)\\k\geq 1}}^{k_0}
    \chi^{b^n_{k,\beta}}((n-\ell)) \cdot x_{b^n_{k,\beta}}
    - 2 \cdot T_n^{\ell,k_0}(c)
    &
    \forall \ell \le \ell_0 \text{ even},
    \label{eq:lp:parseval_trunc}
    \\
    \text{(Young)}\quad
    &
    \sum_{i=0}^{\ell_0} \sum_{\beta \vdash i} \sum_{k = \htt(\beta)}^{k_0}
    K_{b^n_{k,\beta},h_m^n} \cdot x_{b^n_{k,\beta}}
    \le
    c^m
    &
    \forall m \le n \text{ even},
    \label{eq:lp:young_trunc}
    \\
    \text{(Unit)}\quad
    &
    x_{(n)} = 1,
    \label{eq:lp:unit_1_trunc}
    \\
    &
    x_{b^n_{k,\beta}} \ge 0
    &
    \mathllap{%
      \begin{aligned}[t]
        \forall i \in \{0,\ldots,\ell_0\},\\
        \forall \beta \vdash i,\\
        \forall k \in \{\htt(\beta),\ldots,k_0\},
      \end{aligned}%
    }
    \label{eq:lp:nonnegative_trunc}
  \end{align}
  where the variables are $M$, $(\Psi_{\ell})_{\ell}$ and $(x_{b^n_{k,\beta}})_{k,\beta}$ and we have
  \begin{align*}
    T_n^{\ell,k_0}(c)
    & \coloneqq
    \sum_{\substack{k=k_0 + 2\\ k \text{ odd}}}^{\frac{n-3\ell-1}{2}}
    \frac{c^{2k+2\ell}}{\binom{2k+2\ell}{k+\ell}}
    + \sum_{\substack{k=\frac{n-3\ell-1}{2} + 1\\ k \text{ odd}}}^{\frac{n-\ell-1}{2}}
    \frac{c^{n-1}}{\binom{n-\ell-1}{k+\ell}}.
  \end{align*}
\end{definition}

%%%%%%%%%%%%%%%%%%%%%%%%%%%%%%%%%%%%%%%%%%%%%%%%%%%%%%%%%%%%%%%%%%%%%%%%%%%%%%%%%%
%                     Parseval Coefficient Properties
%%%%%%%%%%%%%%%%%%%%%%%%%%%%%%%%%%%%%%%%%%%%%%%%%%%%%%%%%%%%%%%%%%%%%%%%%%%%%%%%%%

First, we show that if the belly size of a partition is larger than $\ell$, then its coefficient in the
Parseval~\eqref{eq:lp:parseval} is zero, so the corresponding variable in the linear program can be ignored.

\begin{claim}\label{claim:large_belly}
  If $\lvert \beta \rvert > \ell$, then $\chi^{b^n_{k,\beta}}((n-\ell)) = 0$.
\end{claim}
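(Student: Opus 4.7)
The plan is to apply the Murnaghan--Nakayama rule (\cref{theo:mn_rule}) to $\chi^{b^n_{k,\beta}}((n-\ell))$, exploiting the flexibility highlighted in the remark following that theorem: I peel off the long part first. The cycle type of an $(n-\ell)$-cycle in $S_n$ is the partition $(n-\ell,1^{\ell})$ of $n$, so after removing the part of size $n-\ell$, the character becomes a signed sum over all rim hooks $\xi$ of size $n-\ell$ contained in $b^n_{k,\beta}$. The goal then reduces to showing that no such rim hook exists when $\lvert\beta\rvert>\ell$.

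Next, I invoke the standard correspondence between rim hooks of size $m$ in a shape $\lambda$ and cells $(i,j)\in\lambda$ with hook length $h(i,j)=(\lambda_i-j)+(\lambda^{\top}_j-i)+1$ equal to $m$ (each such cell generates a unique rim hook by ``wrapping around'' its arm and leg). The hook length is trivially maximized at the corner cell $(1,1)$, giving $h(1,1)=\lambda_1+\lambda^{\top}_1-1$, since $\lambda_i\le\lambda_1$ and $\lambda^{\top}_j\le\lambda^{\top}_1$ and $i,j\ge 1$. Plugging in $\lambda=b^n_{k,\beta}$, for which $\lambda_1=n-\lvert\beta\rvert-k$ and $\lambda^{\top}_1=k+1$ (the total number of rows), the maximum hook length is exactly $n-\lvert\beta\rvert$.

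Under the hypothesis $\lvert\beta\rvert>\ell$, we have $n-\ell>n-\lvert\beta\rvert$, which strictly exceeds every hook length in $b^n_{k,\beta}$. Consequently no cell has hook length $n-\ell$, so there are no rim hooks of size $n-\ell$ in $b^n_{k,\beta}$. The Murnaghan--Nakayama sum is empty, giving $\chi^{b^n_{k,\beta}}((n-\ell))=0$.

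I do not anticipate any real obstacle: the argument is a direct combination of the rim-hook/hook-length correspondence with a one-line upper bound on hook lengths. The only point that requires care is the choice of which part to remove first in the Murnaghan--Nakayama expansion; peeling off the $\ell$ fixed points first would not obviously reveal the vanishing, whereas peeling off the $(n-\ell)$-cycle immediately pins the argument to a single size-$(n-\ell)$ rim hook that simply cannot fit into a shape with maximum hook length $n-\lvert\beta\rvert<n-\ell$.
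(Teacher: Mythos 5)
Your proof is correct, but it takes a genuinely different route than the paper. The paper peels off the $\ell$ fixed points first in the Murnaghan--Nakayama expansion, reducing to a shape of size $n-\ell$ from which the whole thing must be removed as a single rim hook; this forces the residual shape to be a hook, which would require the belly to have been consumed among the $\ell$ fixed points, contradicting $\lvert\beta\rvert>\ell$. You instead peel off the $(n-\ell)$-part first and invoke the bijection between rim hooks of a shape and its cells (a rim hook of size $m$ exists iff some cell has hook length $m$), together with the elementary observation that hook lengths are maximized at $(1,1)$, where $h(1,1)=\lambda_1+\lambda^{\top}_1-1 = (n-\lvert\beta\rvert-k)+(k+1)-1 = n-\lvert\beta\rvert < n-\ell$. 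Your argument has the virtue of producing an explicit quantitative bound (the maximum hook length of $b^n_{k,\beta}$ is exactly $n-\lvert\beta\rvert$) that makes the vanishing immediate and avoids any case analysis over derivation paths; the paper's version relies on a slightly softer structural fact (a shape admits a full-size rim hook iff it is a hook) but needs no hook-length machinery. Both are short and correct; the choice between them is a matter of taste.
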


\begin{proof}
  We apply the Murnaghan--Nakayama rule, \cref{theo:mn_rule}, removing the $\ell$ fixed points first. Let
  $\lambda$ be the resulting shape after their removal in a particular derivation path. Note that $\lvert
  \lambda \rvert = n- \ell$ and we still need to remove a rim hook of size $n-\ell$. The contribution of the
  corresponding path is non-zero only if $\lambda$ is a hook. But for $\lambda$ to be a hook, the belly
  $\beta$ needs to be completely removed by some of the $\ell$ removed fixed points and thus $\lvert \beta
  \rvert \le \ell$.
\end{proof}

Next, we show that the coefficients $\chi^{b^n_{k,\beta}}((n-\ell))$ are still zero as long as the belly size
is smaller than $\ell$ but the leg length is not too small nor too large. It will be convenient to use the
following notation for two distinguished cells of a partition.
\begin{notation}
  Let $\lambda \vdash n$. We call \emph{hand} of $\lambda$ the rightmost cell in the first row of
  $\lambda$. We call \emph{foot} of $\lambda$ the lowest cell in the first column of $\lambda$.
\end{notation}

\begin{claim}\label{claim:thin_balanced}
  If $\lvert \beta \rvert < \ell \le n/2-1$ and $\ell \le k \le n - \lvert \beta \vert - \ell - 1$, then
  $\chi^{b^n_{k,\beta}}((n-\ell)) = 0$.
\end{claim}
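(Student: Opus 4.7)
The plan is to apply the Murnaghan--Nakayama rule (\cref{theo:mn_rule}) to $\lambda \coloneqq b^n_{k,\beta}$ at the cycle type $(n-\ell, 1^\ell)$, choosing to strip off the single large part $n-\ell$ first. Since $\chi^{\mu}((1^\ell)) = f_{\mu}$, this yields
\begin{align*}
  \chi^{\lambda}((n-\ell))
  & = \sum_{\xi} (-1)^{\htt(\xi)-1}\cdot f_{\lambda\setminus\xi},
\end{align*}
where $\xi$ ranges over rim hooks of size $n-\ell$ in $\lambda$. It therefore suffices to show that $\lambda$ admits no such rim hook. Via the standard bijection between rim hooks of size $s$ in $\lambda$ and cells of $\lambda$ with hook length exactly $s$, the task reduces to showing that no cell $(i,j)\in\lambda$ satisfies $h(i,j) \coloneqq \lambda_i + \lambda^{\top}_j - i - j + 1 = n - \ell$.

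I will verify this by a short case analysis on the region of $\lambda$ containing $(i,j)$, writing $t \coloneqq \htt(\beta)$. At the top-left cell $(1,1)$, direct computation gives $h(1,1) = (n-\lvert\beta\rvert-k) + (k+1) - 1 = n-\lvert\beta\rvert$, which is strictly larger than $n-\ell$ by the hypothesis $\lvert\beta\rvert<\ell$. For cells $(1,j)$ with $j\ge 2$, since $\lambda^{\top}_j - j$ is strictly decreasing in $j$, it suffices to bound $h(1,2) = n-\lvert\beta\rvert-k+t-1$ (using $\lambda^{\top}_2 = 1+t$, which is valid because $\lambda_1 \ge \ell + 1 \ge 2$ by the upper bound on $k$); combining $k \ge \ell$ with $t \le \lvert\beta\rvert$ then gives $h(1,2) \le n-\ell-1$. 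For cells $(i,j)$ with $i \ge 2$, monotonicity of $\lambda_i - i$ (restricted to $i\ge 2$) and of $\lambda^{\top}_j - j$ shows that the maximum of $h(i,j)$ is attained at $(2,1)$, where $h(2,1) = \beta_1 + k$; combining $k \le n-\lvert\beta\rvert-\ell-1$ with $\beta_1 \le \lvert\beta\rvert$ yields $h(2,1) \le n-\ell-1$.

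Together, these three bounds cover every cell of $\lambda$, so no rim hook of size $n-\ell$ exists and the claim follows. The conceptual step is recognizing that pulling off the long cycle first converts the statement into a purely local hook-length condition, bypassing any need to analyze skew tableaux of $\lambda/\mu_r$ or sign-reversing involutions. The only bookkeeping subtlety is identifying the three extremal cells $(1,1)$, $(1,2)$, $(2,1)$ at which each hook-length bound is saturated, and observing that each of the three hypotheses $\lvert\beta\rvert<\ell$, $k \ge \ell$, and $k \le n-\lvert\beta\rvert-\ell-1$ closes exactly one of the three cases, each with unit slack.
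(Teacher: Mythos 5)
Your proof is correct. You and the paper both strip off the long cycle $(n-\ell)$ first via Murnaghan--Nakayama and reduce to showing that $b^n_{k,\beta}$ has no rim hook of size $n-\ell$, but you establish this differently: the paper does a direct case analysis on which of the hand and foot of the shape the hypothetical rim hook could remove (four cases, each contradicting one hypothesis), whereas you invoke the classical bijection between rim hooks of size $s$ and cells of hook length exactly $s$, then show $h(i,j)\neq n-\ell$ for every cell by a monotonicity argument that isolates three extremal cells $(1,1)$, $(1,2)$, $(2,1)$. The two proofs are logically parallel---your three extremal cells match the paper's three substantive cases, and each closes against exactly one of the three hypotheses with unit slack---but your version replaces the geometric reasoning about rim hook containment with a purely quantitative hook-length computation, which some may find cleaner. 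One minor bookkeeping point worth making explicit: when $\beta$ is the empty partition ($t=0$), one should read $\beta_1=0$ in the bound $h(2,1)=\beta_1+k$, and $\lambda^\top_2 = t+1 = 1$; the argument goes through verbatim, but the convention deserves a word. Also, the hypothesis $\ell\le n/2-1$, which the paper's Case~1 uses explicitly, enters your proof only implicitly through the validity of the shape (it guarantees $\lambda_1\geq\ell+1\geq 2$ so that $\lambda^\top_2 = t+1$ and the hook-length bijection has the full size range available); it would be worth flagging where this hypothesis is consumed.
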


\begin{proof}
  We apply the Murnaghan--Nakayama rule, \cref{theo:mn_rule}, removing $(n-\ell)$ first. To leave a valid
  shape after this removal, the following must hold:
  \begin{enumerate*}
  \item if a cell is removed in the first row of $b^n_{k,\beta}$, then all cells to its right must also be
    removed.
  \item if a cell is removed in the first column of $b^n_{k,\beta}$, then all cells below it must also be
    removed.
  \end{enumerate*}

  Our goal is to show that no removal leaves a valid shape. Now we have four cases to consider when we remove
  a valid rim hook of size $n-\ell$ from $b^n_{k,\beta}$.
  \begin{enumerate}[wide,label={Case \arabic*.}]
  \item The removal occurred completely inside the belly $\beta$. This implies $n-\ell \le \lvert \beta
    \rvert$. Since $\ell \le n/2-1$, this contradicts $\lvert \beta \rvert <\ell$.
  \item The hand and the foot were removed. In this case, the remaining shape must be $\beta$ so $\lvert \beta
    \rvert = \ell$. This contradicts the assumption $\lvert \beta \rvert < \ell$.
  \item The hand was removed but not the foot, which implies that $k + 1 \le \ell$ since no cell in the first
    column was removed. This contradicts the assumption $\ell \le k$.
  \item The foot was removed but not the hand, which implies that $n - \lvert \beta \rvert - k \le \ell$ since
    no cell in the first row was removed. This contradicts the assumption $k \le n -\lvert \beta \rvert -\ell
    -1$.
  \end{enumerate}
  Since no removal leaves a valid shape, we have $\chi^{b^n_{k,\beta}}((n-\ell)) = 0$.
\end{proof}

For the size of the belly being exactly $\ell$, the coefficient
$\chi^{b^n_{k,\beta}}((n-\ell))$ is not zero but can be computed with
the following claim.

\begin{claim}\label{claim:char_single_cycle}
  If $\beta \vdash \ell$, then
  \begin{align*}
    \chi^{b^n_{k,\beta}}((n-\ell)) & = (-1)^k f_{\beta}.
  \end{align*}
\end{claim}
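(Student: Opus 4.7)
The plan is to apply the Murnaghan--Nakayama rule, \cref{theo:mn_rule}, to $\chi^{b^n_{k,\beta}}$ evaluated at the conjugacy class $(n-\ell, 1^\ell)$ (a single $(n-\ell)$-cycle together with $\ell$ fixed points), removing the part of size $n-\ell$ first. It suffices to show that $b^n_{k,\beta}$ admits a \emph{unique} rim hook $\xi$ of size $n-\ell$, with $\htt(\xi)= k+1$, and such that $b^n_{k,\beta}\setminus\xi$ is (isomorphic as a Young diagram to) $\beta$. The single surviving term of the Murnaghan--Nakayama sum then contributes $(-1)^{(k+1)-1}\cdot\chi^\beta((1^\ell)) = (-1)^k f_\beta$, since $\chi^\beta((1^{\lvert\beta\rvert})) = f_\beta$.

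The natural candidate rim hook $\xi$ consists of the entire first row and the entire first column of $b^n_{k,\beta}$, meeting at the corner $(1,1)$. A direct count gives $\lvert\xi\rvert = (n-\lvert\beta\rvert-k) + (k+1) - 1 = n-\ell$; the set is connected at $(1,1)$; it contains no $2\times 2$ block, because the cell $(2,2)$ (which belongs to the shape precisely when $\beta_1 \ge 1$) lies in the belly, hence outside $\xi$; its removal leaves exactly the shifted belly, which as an abstract Young diagram is $\beta$; and its height is $k+1$. This takes care of existence and computes the correct contribution.

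For uniqueness, I would invoke the standard bijection between rim hooks of size $s$ in a shape $\lambda$ and cells $(i,j)\in\lambda$ with hook length $h(i,j) = s$. It thus suffices to verify that $(1,1)$ is the unique cell of $b^n_{k,\beta}$ with hook length $n-\ell$. Using $\lambda_1 = n-\lvert\beta\rvert-k$, $\lambda^\top_1 = k+1$, and the identity $\lambda^\top_j = 1 + \beta^\top_{j-1}$ for $j \ge 2$, the computation $h(1,1) = n-\ell$ is immediate. For any other cell, the constraint $n \ge \lvert\beta\rvert + k + \beta_1 + 1$ built into the definition of $b^n_{k,\beta}$ rules out hook length $n-\ell$. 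Indeed, a cell $(1,j)$ with $j\ge 2$ would require $\lambda^\top_j = j + k$, impossible since $\lambda^\top_j \le k+1$. A cell $(i,1)$ with $i\ge 2$ would force $\beta_{i-1} = n - \lvert\beta\rvert - k + i - 2 \ge \beta_1 + 1$, contradicting the monotonicity $\beta_{i-1}\le\beta_1$. Finally, an interior cell $(i,j)$ with $i,j\ge 2$ would need $\beta_{i-1} + \beta^\top_{j-1} = n-\lvert\beta\rvert + i + j - 3$, whose right side is at least $k + \beta_1 + i + j - 2 \ge k + \beta_1 + 2 > \beta_1 + \htt(\beta) \ge \beta_{i-1} + \beta^\top_{j-1}$, a contradiction.

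The main obstacle is this last case analysis for uniqueness of the rim hook; the other steps are essentially bookkeeping once the correct candidate $\xi$ is identified. The degenerate case $\beta = \emptyset$ (so $\ell = 0$ and $b^n_{k,\emptyset} = h^n_k$) is consistent with the formula: the whole hook is trivially the unique rim hook of size $n$, with height $k+1$, giving $(-1)^k \cdot f_\emptyset = (-1)^k$, which matches the classical character value of a cycle on a hook shape.
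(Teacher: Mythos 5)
Your overall strategy is the same as the paper's: apply the Murnaghan--Nakayama rule removing the $(n-\ell)$-cycle first, argue there is a unique rim hook of size $n-\ell$, compute its height and the residual shape. However, there is a genuine error in your explicit identification of that rim hook. You claim $\xi$ is ``the entire first row and the entire first column of $b^n_{k,\beta}$''. When $\beta\neq\emptyset$ this set is \emph{not} a rim hook: the corner cell $(1,1)$ is not even on the rim of $b^n_{k,\beta}$, since $(2,2)$ belongs to the shape whenever $\beta_1\ge 1$ (so $(1,1)$ is an interior cell); and removing the first row and first column leaves the belly shifted one step down and right, which is not a left-justified Young diagram, violating the requirement that a rim hook's removal leave a valid shape. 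Your own remark that the residue is ``the shifted belly, which as an abstract Young diagram is $\beta$'' is exactly the symptom of this mistake---it is not an actual sub-diagram, it is a translate of one.

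The argument can be repaired without changing the conclusion. The rim hook of size $n-\ell$ is the border strip associated with the cell $(1,1)$ via the hook-length bijection: it is the contiguous set of rim cells running from the hand $(1,\lambda_1)$ around the outer boundary to the foot $(\lambda_1^\top,1)$. Its length is $h(1,1)=\lambda_1+\lambda_1^\top-1=n-\ell$ as you computed, its height is $\lambda_1^\top=k+1$, and its removal yields the shape $(\lambda_2-1,\lambda_3-1,\dots)=(\beta_1,\dots,\beta_t,0,\dots)=\beta$, now sitting at the origin as an honest Young diagram. With this correction your size/height/residue computations and, most importantly, your uniqueness argument (no other cell has hook length $n-\ell$, using $n\ge\lvert\beta\rvert+k+\beta_1+1$) all go through. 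That uniqueness argument is actually a genuine contribution: the paper's one-line proof silently asserts that the rim hook is unique, while you supply a clean justification via the hook-length bijection.
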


\begin{proof}
  We apply the Murnaghan--Nakayama rule, \cref{theo:mn_rule}, removing $(n-\ell)$ first. Note the sign of the
  rim hook is $(-1)^k$ and the remaining shape is $\beta$ from which the claim readily follows.
\end{proof}

%%%%%%%%%%%%%%%%%%%%%%%%%%%%%%%%%%%%%%%%%%%%%%%%%%%%%%%%%%%%%%%%%%%%%%%%%%%%%%%%%
%                        Kostka Bounds
%%%%%%%%%%%%%%%%%%%%%%%%%%%%%%%%%%%%%%%%%%%%%%%%%%%%%%%%%%%%%%%%%%%%%%%%%%%%%%%%%

Observe that \cref{claim:char_single_cycle} says that a number of partitions depending on $n$ have non-zero
coefficient in the Parseval~\eqref{eq:lp:parseval}. To be able to remove most of the corresponding variables,
we will make use of the Young restrictions~\eqref{eq:lp:young}, which will require bounding the Kostka
constants. In one regime, we also compute the Kostka constant exactly since this will be used later. In the
process, we will need the following derived shape.

\begin{notation}
  Let $\beta\coloneqq(\beta_1,\ldots,\beta_t)$ be a shape and $k \ge t$ be an integer. We define
  $\mu_{k,\beta}\coloneqq (\beta_1+1,\beta_2+1,\ldots,\beta_t+1,1^{k-t})$ to be shape obtained from
  $b^n_{k,\beta}$ by removing the first row. Note that this does not depend on $n$.
\end{notation}

\begin{claim}\label{claim:kostka_belly}
  Let $\beta\coloneqq(\beta_1,\ldots,\beta_t)$ be a partition. If $m \le n-\beta_1-1$, then
  \begin{align*}
    K_{b^n_{k,\beta},h_{m}^n}
    & =
    \binom{m}{k+\lvert \beta \rvert} f_{\mu_{k,\beta}}
    \ge
    \binom{m}{k+\lvert \beta \rvert} f_{\beta}.
  \end{align*}
\end{claim}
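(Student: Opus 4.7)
The Kostka number $K_{b^n_{k,\beta},h^n_m}$ counts semistandard Young tableaux of shape $b^n_{k,\beta}$ with content $h^n_m = (n-m,1^m)$, i.e., fillings using $n-m$ copies of $1$ and a single copy of each of $2,3,\ldots,m+1$, with rows weakly increasing and columns strictly increasing. The strategy is to show that any such filling is uniquely determined by (i) the choice of which $|\beta|+k$ of the ``large'' symbols $\{2,\ldots,m+1\}$ go into the sub-shape $\mu_{k,\beta}$ obtained by deleting the first row of $b^n_{k,\beta}$, and (ii) the arrangement of those symbols as a standard tableau of $\mu_{k,\beta}$. The product of the corresponding counts gives the claimed formula $\binom{m}{k+|\beta|}f_{\mu_{k,\beta}}$.

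\textbf{Where all the $1$'s must live.} Since columns are strict and $1$ is the minimum symbol, every $1$ must sit in the top cell of its column, hence all $n-m$ copies of $1$ lie in the first row, filling its leftmost $n-m$ positions. The first row of $b^n_{k,\beta}$ has length $n-|\beta|-k$, so the remaining $(n-|\beta|-k)-(n-m)=m-|\beta|-k$ positions of row $1$ must be filled by an increasing sequence of symbols from $\{2,\ldots,m+1\}$; the complementary $|\beta|+k$ symbols must fill the $|\beta|+k$ cells forming $\mu_{k,\beta}$ below. The key role of the hypothesis $m\le n-\beta_1-1$ is that $n-m\ge \beta_1+1$, which means the $1$'s alone cover every column of $b^n_{k,\beta}$ that extends below row $1$ (since the longest such column sits in position $\le \beta_1+1$). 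Consequently the cross-constraint between row $1$ and the top row of $\mu_{k,\beta}$ is automatic: any symbol $\ge 2$ placed in $\mu_{k,\beta}$ is strictly greater than the $1$ directly above it. The choices in (i) and (ii) therefore decouple completely, and (ii) is counted, for any fixed choice of the $|\beta|+k$ distinct labels, by the number of standard tableaux of $\mu_{k,\beta}$, namely $f_{\mu_{k,\beta}}$. This yields the equality.

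\textbf{The inequality $f_{\mu_{k,\beta}}\ge f_\beta$.} I plan to construct an explicit injection from standard Young tableaux of $\beta$ to standard Young tableaux of $\mu_{k,\beta}$. Given a SYT $T$ of $\beta$, define $T'$ on $\mu_{k,\beta}=(\beta_1+1,\ldots,\beta_t+1,1^{k-t})$ by filling the first column with $1,2,\ldots,k$ from top to bottom and setting $T'(i,j)\coloneqq k+T(i,j-1)$ for $1\le i\le t$ and $j\ge 2$. Using $k\ge t=\htt(\beta)$ (implicit in the definition of $b^n_{k,\beta}$), every row $i\le t$ of $T'$ starts with $i\le k<k+T(i,1)$, and all row and column strictness conditions inherited from $T$ carry over to the shifted portion, so $T'$ is a valid SYT. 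The map is obviously injective (erase the first column, subtract $k$), giving the claimed bound.

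\textbf{Main obstacle.} The only genuinely delicate point is verifying that the interaction between row $1$ of $b^n_{k,\beta}$ and the sub-shape $\mu_{k,\beta}$ reduces, under the hypothesis $m\le n-\beta_1-1$, to a clean product of a binomial choice and a standard tableau count; once this decoupling is established, the combinatorial injection for the inequality is routine.
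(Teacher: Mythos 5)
Your proof is correct and takes essentially the same approach as the paper: both count semistandard tableaux of shape $b^n_{k,\beta}$ and content $h^n_m$ by observing that the $n-m$ ones must occupy the leftmost cells of the first row, and the hypothesis $m\le n-\beta_1-1$ (equivalently $n-m\ge\beta_1+1$) guarantees those ones cover every column extending below row~$1$, so the constraints decouple into a choice of $k+\lvert\beta\rvert$ labels for $\mu_{k,\beta}$ times an independent SYT count $f_{\mu_{k,\beta}}$. The one small difference is in the inequality $f_{\mu_{k,\beta}}\ge f_\beta$: the paper simply cites the diagram containment $\beta\subseteq\mu_{k,\beta}$, whereas you spell out an explicit label-shifting injection from SYT$(\beta)$ to SYT$(\mu_{k,\beta})$ realizing the same fact; this is a nice unpacking but not a different route.
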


\begin{proof}
  First, we prove the equality. Under the assumption $m \le n-\beta_1-1$, the palette $h_{m}^n$ has enough
  ones to fill the first $\beta_1 + 1$ positions of the first row of $b^n_{k,\beta}$ (and necessarily all the
  ones go in this first row). Then we can choose $k + \lvert \beta \rvert$ colors out of $\{2,\ldots,m+1\}$ to
  fill the positions \emph{not} in the first row of $b^n_{k,\beta}$ giving a total of $\binom{m}{k+\lvert
    \beta \rvert}$ possibilities. For each such possibility, there are exactly $f_{\mu_{k,\beta}}$ ways of
  filling these positions with these colors, since $\mu_{k,\beta}$ is their shape.

  The inequality follows by observing that the shape $\beta$ is contained in the shape $\mu_{k,\beta}$
  resulting in $f_{\mu_{k,\beta}} \ge f_{\beta}$.
\end{proof}

The following claim provides a defective bound when $m = n-1$ and the condition of \cref{claim:kostka_belly}
is not met.

\begin{claim}\label{claim:kostka_lb}
  Let $\beta\coloneqq(\beta_1,\ldots,\beta_t) \vdash \ell$. Then
  \begin{align*}
    K_{b^n_{k,\beta},h_{n-1}^n}
    & \ge
    \binom{n-\ell - 1}{k + \ell} f_{\mu_{k,\beta}}
    \ge
    \binom{n - \ell - 1}{k + \ell} f_{\beta}.
  \end{align*}
\end{claim}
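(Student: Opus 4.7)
The plan is to first observe that $h^n_{n-1} = (1^n)$, so $K_{b^n_{k,\beta}, h^n_{n-1}}$ equals $f_{b^n_{k,\beta}}$, the number of standard Young tableaux (SYTs) of shape $b^n_{k,\beta}$; likewise $f_{\mu_{k,\beta}}$ and $f_\beta$ are the SYT counts of the respective shapes. Both inequalities then become combinatorial statements about SYT counts, so it suffices to construct injections.

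For the first inequality, I will exhibit an injection from pairs $(V, T)$, where $V \subseteq \{\ell+2,\ldots,n\}$ is a $(k+\ell)$-subset and $T$ is a SYT of $\mu_{k,\beta}$, into SYTs of $b^n_{k,\beta}$. Writing $V = \{v_1 < \cdots < v_{k+\ell}\}$, I define a filling $U$ of $b^n_{k,\beta}$ by placing $\{1,\ldots,n\}\setminus V$ in the first row in increasing order, and by setting $U(i,j) \coloneqq v_{T(i-1,j)}$ for $i \ge 2$ (that is, the $V$-values fill the tail in the relative order specified by $T$, after shifting indices by one row since $\mu_{k,\beta}$ is $b^n_{k,\beta}$ with its first row removed). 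All SYT conditions internal to the tail and within row~1 are immediate, so the only non-trivial constraint is $U(1,j) < U(2,j)$ for $j \in \{2, \ldots, \beta_1+1\}$. Since $V \subseteq \{\ell+2,\ldots,n\}$, the complement $\{1,\ldots,n\}\setminus V$ contains $\{1, \ldots, \ell+1\}$, so the first $\ell+1$ cells of row~1 hold the values $1, 2, \ldots, \ell+1$; because $|\beta| = \ell$ forces $\beta_1 \le \ell$, we get $U(1,j) = j \le \ell + 1 < \ell + 2 \le v_1 \le U(2,j)$. Injectivity is immediate: from $U$, the set $V$ is recovered as the entries of the tail and $T$ as their relative order. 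Counting pairs yields $\binom{n-\ell-1}{k+\ell} f_{\mu_{k,\beta}}$.

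For the second inequality $f_{\mu_{k,\beta}} \ge f_\beta$, I use an analogous shift construction: given a SYT $S$ of $\beta$, I form a SYT $T$ of $\mu_{k,\beta}$ by placing $1, 2, \ldots, k$ down the first column of $\mu_{k,\beta}$ and setting $T(i, j+1) \coloneqq S(i,j) + k$ for each cell $(i,j)$ of $\beta$. The SYT conditions are routine to verify: the first column is strictly increasing, and the shifted copy of $S$ sitting strictly to the right of column~1 inherits its row and column monotonicities from $S$, while every cell in the first column has value at most $k$ and every cell outside the first column has value at least $k+1$.

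The only real subtlety is the cross-row column constraint in the first construction, which is precisely where the bound $\beta_1 \le \ell$ is invoked; all remaining verifications are bookkeeping.
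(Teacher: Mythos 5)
Your proof is correct and follows essentially the same strategy as the paper's. Both proofs lower-bound $K_{b^n_{k,\beta},h^n_{n-1}} = f_{b^n_{k,\beta}}$ by restricting to fillings in which the leading cells of the first row are forced to the smallest available values so that the row-1-to-row-2 column constraints are automatic, and then factor the count as (a binomial choice of which values populate the cells below row one) times $f_{\mu_{k,\beta}}$. The minor differences are that the paper fixes only $\{1,\ldots,\beta_1+1\}$ in row one and so actually obtains the slightly sharper $\binom{n-\beta_1-1}{k+\ell}f_{\mu_{k,\beta}}$ before relaxing to $\binom{n-\ell-1}{k+\ell}f_{\mu_{k,\beta}}$, whereas you fix $\{1,\ldots,\ell+1\}$ and get the stated bound directly; and for $f_{\mu_{k,\beta}} \ge f_\beta$ the paper cites the general containment fact while you spell out a concrete injection. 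Neither difference affects the conclusion.
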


\begin{proof}
  We start by the first inequality. Since we are only interested in a lower bound, we only consider the
  fillings which place the numbers in $[\beta_1+1]$ of the palette $h_{n-1}^n$ in the first $\beta_1+1$
  positions of the first row of $b^n_{k,\beta}$. Since $\lvert \beta \rvert = \ell$, this leaves at least
  $n-\ell-1$ colors out of which we choose to fill the remaining rows of $b^n_{k,\beta}$ accounting for
  $k+\ell$ cells. As before, for each such possibility, there are exactly $f_{\mu_{k,\beta}}$ ways of filling
  these positions with these colors, since $\mu_{k,\beta}$ is their shape and we have filled the first
  $\beta_1+1$ cells of the first row of $b^n_{k,\beta}$ with the numbers in $[\beta_1+1]$.

  The second inequality follows again by $f_{\mu_{k,\beta}} \ge f_{\beta}$ since the shape $\beta$ is
  contained in the shape $\mu_{k,\beta}$.
\end{proof}

%%%%%%%%%%%%%%%%%%%%%%%%%%%%%%%%%%%%%%%%%%%%%%%%%%%%%%%%%%%%%%%%%%
%                  Key Lemma LP 2
%%%%%%%%%%%%%%%%%%%%%%%%%%%%%%%%%%%%%%%%%%%%%%%%%%%%%%%%%%%%%%%%%%

Now, we relate the optimum values of the second linear program from \cref{def:lp_2} and the first one
from \cref{def:lp_1} for $n$ sufficiently large.

\begin{proposition}\label{prop:lp_2_pos_implies_lp_1_pos}
  Let $n$ be an odd positive integer, $c > 1$ and $\ell_0$ be non-negative even integer. Suppose $k_0$ is a
  positive integer such that $\ell_0 \le k_0 \le (n- \ell_0 - 3)/2$. Then
  \begin{align*}
    \OPT(P_n^{\ell_0,k_0}(c)) & \le \OPT(P_n^{\ell_0}(c)).
  \end{align*}
\end{proposition}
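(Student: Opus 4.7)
The plan is to show that any feasible solution of $P_n^{\ell_0}(c)$ projects onto a feasible solution of $P_n^{\ell_0,k_0}(c)$ with the same objective value. Given a feasible $(x_\lambda, \Psi_\ell, M)$ of $P_n^{\ell_0}(c)$, I set $y_{b^n_{k,\beta}} \coloneqq x_{b^n_{k,\beta}}$ for every admissible $(k,\beta)$ in the index range of $P_n^{\ell_0,k_0}(c)$, take $M' \coloneqq M$, and define $\Psi'_\ell$ via the Parseval identity~\eqref{eq:lp:parseval_trunc}. The truncated Young constraints~\eqref{eq:lp:young_trunc} follow by dropping non-negative terms from~\eqref{eq:lp:young}; the unit and non-negativity constraints are immediate. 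The only substantial step is to verify $\Psi'_\ell \le \Psi_\ell$.

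To compare the two Parsevals, I would rewrite $\Psi_\ell = \sum_\lambda \chi^\lambda((n-\ell)) x_\lambda$ in terms of the $(k,\beta)$ parametrization. Since $n$ is odd and $\ell$ is even, an $(n-\ell)$-cycle has sign $+1$, so \cref{fact:sign_of_transpose} together with the transposition constraint~\eqref{eq:lp:transposition} ensures that $\lambda$ and $\lambda^\top$ contribute identically. The trivial pair $\{(n),(1^n)\}$ yields the additive constant $2$; \cref{claim:large_belly} removes shapes with $|\beta| > \ell$; and \cref{claim:thin_balanced} removes the middle leg-length range $\ell \le k \le n-|\beta|-\ell-1$ whenever $|\beta| < \ell$. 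Because $k_0 \ge \ell_0 \ge \ell$, every surviving shape with $|\beta| < \ell$ has a representative with $k \le \ell - 1 \le k_0$ and is thus fully retained in $P_n^{\ell_0,k_0}(c)$ (the leading factor of $2$ in its Parseval sum absorbing the discarded transpose). The whole discrepancy $\Psi_\ell - (\Psi'_\ell + 2T_n^{\ell,k_0}(c))$ therefore collapses onto the middle range $k \in (k_0, n-\ell-k_0-1)$ with $|\beta| = \ell$, where \cref{claim:char_single_cycle} makes the character exactly $(-1)^k f_\beta$.

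It thus remains to show that the alternating quantity
$\mathrm{missing} \coloneqq 2 \sum_{\beta \vdash \ell} f_\beta \sum_{k \in (k_0,\, n-\ell-k_0-1)} (-1)^k x_{b^n_{k,\beta}}$
satisfies $\mathrm{missing} \ge -2\,T_n^{\ell,k_0}(c)$. Using $x \ge 0$ I discard the non-negative even-$k$ terms, keeping only the odd-$k$ part. That remainder is symmetric under $k \mapsto n-\ell-k-1$ by transposition, so folding it onto $k \in (k_0, (n-\ell-1)/2]$ produces a factor of $2$ (the possible self-dual midpoint is absorbed by its own Young constraint). For each odd $k$ in the folded range I bound $\sum_{\beta \vdash \ell} f_\beta x_{b^n_{k,\beta}}$ using~\eqref{eq:lp:young}: when $k \le (n-3\ell-1)/2$, instantiating Young at $m = 2(k+\ell)$ with \cref{claim:kostka_belly} gives $\sum_\beta f_\beta x_{b^n_{k,\beta}} \le c^{2(k+\ell)}/\binom{2(k+\ell)}{k+\ell}$; for larger odd $k$ (up to $(n-\ell-1)/2$), instantiating Young at $m = n-1$ with \cref{claim:kostka_lb} gives $\sum_\beta f_\beta x_{b^n_{k,\beta}} \le c^{n-1}/\binom{n-\ell-1}{k+\ell}$. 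Summing these per-$k$ bounds reproduces exactly $T_n^{\ell,k_0}(c)$, closing the inequality.

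The main obstacle is the combinatorial bookkeeping: keeping consistent choices of representatives in the transposition folding so that the factor of $2$ from folding aligns with the factor of $2$ in front of $T$, handling the self-dual midpoint $k = (n-\ell-1)/2$ without losing slack, and checking that the hypothesis $k_0 \le (n-\ell_0-3)/2$ really places the transposes of all LP~II variables outside LP~II and makes both Kostka regimes (central binomial via \cref{claim:kostka_belly}, and defective via \cref{claim:kostka_lb}) simultaneously valid across the tail.
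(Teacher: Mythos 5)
Your proposal follows the paper's proof essentially step for step: project the solution of $P_n^{\ell_0}(c)$ to a solution of $P_n^{\ell_0,k_0}(c)$, verify the truncated constraints trivially, and reduce the comparison $\Psi'_\ell \le \Psi_\ell$ to the middle leg-length range $k_0 < k < n-\ell-k_0-1$ with $|\beta| = \ell$ via \cref{claim:large_belly,claim:thin_balanced,claim:char_single_cycle}, then bound the odd-$k$ terms by the Young constraints in the two regimes of \cref{claim:kostka_belly,claim:kostka_lb} to recover $T_n^{\ell,k_0}(c)$. One small bookkeeping slip: the displayed definition of ``missing'' should not carry the leading factor of $2$ — since $\Psi'_\ell$ already pairs each retained shape with its transpose (and $\Psi_\ell - \Psi'_\ell - 2T_n^{\ell,k_0}(c)$ equals the raw middle-range sum), the factor of $2$ only appears after folding $k \mapsto n-\ell-k-1$; keeping it in both places would leave you needing $-4T_n^{\ell,k_0}(c)$ rather than $-2T_n^{\ell,k_0}(c)$. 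With that corrected (and noting, as the paper does, that the midpoint $k=(n-\ell-1)/2$ can simply be over-counted since its contribution is nonpositive), your argument is the paper's argument.
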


\begin{proof}
  Fix an optimum solution $(M, \Psi, x)$ to $P_n^{\ell_0}(c)$. Let $x'$ be the tuple obtained from $x$ by
  ignoring variables of the form $x_{b^n_{k,\beta}}$ where $k \ge k_0 + 1$. We compute $\Psi'$ from $x'$
  using~\eqref{eq:lp:parseval_trunc} and we let $M' \coloneqq \max_{\ell} \Psi_{\ell}'$.

  We claim that $(M',\Psi',x')$ is a feasible solution of $P_n^{\ell_0,k_0}(c)$ (not necessarily with the same
  value as $(M,\Psi,x)$). Trivially, restrictions~\eqref{eq:lp:m_trunc}, \eqref{eq:lp:parseval_trunc},
  \eqref{eq:lp:unit_1_trunc} and~\eqref{eq:lp:nonnegative_trunc} are satisfied. Also observe that
  restrictions~\eqref{eq:lp:young_trunc} follow from the fact that $x$ is non-negative and it satisfied the
  restrictions~\eqref{eq:lp:young} from $P_n^{\ell_0}(c)$.

  We will now prove that $M' \le M$. For this, it is enough to show that $\Psi'_{\ell} \le \Psi_{\ell}$ for
  every non-negative even $\ell \le \ell_0$. Fix some such $\ell$. Since $k_0 \le (n- \ell_0 - 3)/2$, if $k
  \le k_0$ and $\beta$ is a shape of size at most $\ell$, then $(b^n_{k,\beta})^\top = b^n_{n-\lvert \beta
    \rvert - k - 1,\beta^\top}$ and $n-\lvert \beta \rvert - k - 1 \ge k_0 +1$ implying that the variable
  $x_{(b^n_{k,\beta})^\top}$ is not in $x'$. Now note that by \cref{fact:folding_var_phi} we have
  $\chi^{b^n_{k,\beta}}((n-\ell)) = \chi^{(b^n_{k,\beta})^\top}((n-\ell))$ since $\sgn((n-\ell))$ is
  positive. By \cref{claim:large_belly}, we have $\chi^{b^n_{k,\beta}}((n-\ell)) = 0$ whenever $\lvert
  \beta\rvert > \ell$. On the other hand, when $\lvert \beta \rvert < \ell \le \ell_0$, since $k_0 \ge
  \ell_0$, by \cref{claim:thin_balanced} we have $\chi^{b^n_{k,\beta}}((n-\ell)) = 0$ for every integer $k$
  such that $k_0 +1 \le k \le n - \lvert \beta \vert - k_0 - 2$. From these and
  restriction~\eqref{eq:lp:transposition} ($x_{\lambda} = x_{\lambda^{\top}}$) from $P_n^{\ell_0}(c)$ and
  $x_{(n)}=x_{(1^n)}=1$, we conclude that
  \begin{align*}
    \Psi_{\ell} - \Psi'_{\ell}
    & =
    \sum_{\beta \vdash \ell } \sum_{k = k_0 + 1}^{n-i-k_0-2}
    \chi^{b^n_{k,\beta}}((n-\ell)) \cdot x_{b^n_{k,\beta}}
    + 2 \cdot T_n^{\ell,k_0}(c).
  \end{align*}
  Using \cref{claim:char_single_cycle}, we have
  \begin{equation}\label{eq:diff_psi_l}
    \begin{aligned}
      \Psi_{\ell} - \Psi'_{\ell}
      & =
      \sum_{\beta \vdash \ell } \sum_{k = k_0 + 1}^{n-\ell-k_0-2} (-1)^k f_{\beta} \cdot x_{b^n_{k,\beta}}
      + 2 \cdot T_n^{\ell,k_0}(c)
      \\
      & \ge
      - \sum_{\beta \vdash \ell } \sum_{\substack{k = k_0 + 1\\ k \text{ odd}}}^{n-\ell-k_0-2}
      f_{\beta} \cdot x_{b^n_{k,\beta}}
      + 2 \cdot T_n^{\ell,k_0}(c)
      \\
      & \ge
      - 2 \sum_{\beta \vdash \ell } \sum_{\substack{k = k_0 + 1 \\ k \text{ odd}}}^{(n-\ell-1)/2}
      f_{\beta} \cdot x_{b^n_{k,\beta}}
      + 2 \cdot T_n^{\ell,k_0}(c),
    \end{aligned}
  \end{equation}
  where the last inequality follows from~\eqref{eq:lp:transposition} ($x_{\lambda} = x_{\lambda^{\top}}$) and
  note that we are double counting the cases where $\lambda = \lambda^{\top}$, namely, when $k=(n-\ell-1)/2$.

  Fix an odd integer $k$ in $[k_0+1,n-\ell-k_0-2]$. We consider two cases.

  The first case is when $k \le (n-3\ell-1)/2$, where we use the Young restriction~\eqref{eq:lp:young_trunc}
  for $m=2(k+\ell)$ (and the fact that $x \ge 0$) to get that
  \begin{align*}
    \sum_{\beta \vdash \ell} K_{b^n_{k,\beta},h^n_{2(k+\ell)}} \cdot x_{b^n_{k,\beta}}
    & \le
    c^{2(k+\ell)}.
  \end{align*}
  By our choice of $k$ and by \cref{claim:kostka_belly}, we can simplify the above equation as
  \begin{equation}\label{eq:tail_bound_easy}
    \sum_{\beta \vdash \ell}  f_{\beta} \cdot x_{b^n_{k,\beta}} \le \frac{c^{2(k+\ell)}}{\binom{2(k+\ell)}{k+ \ell}}.
  \end{equation}

  The second case is when $k > (n-3\ell-1)/2$, where we use the Young restriction~\eqref{eq:lp:young_trunc}
  for $m=n-1$ (and the fact that $x \ge 0$) to get that
  \begin{align*}
    \sum_{\beta \vdash \ell} K_{b^n_{k,\beta},h^n_{n-1}} \cdot x_{b^n_{k,\beta}}
    & \le
    c^{n-1}.
  \end{align*}
  By \cref{claim:kostka_lb}, we can simplify the above equation as
  \begin{equation}\label{eq:tail_bound_hard}
    \sum_{\beta \vdash \ell}  f_{\beta} \cdot x_{b^n_{k,\beta}} \le \frac{c^{n-1}}{\binom{n-\ell-1}{k + \ell}}.
  \end{equation}

  Combining~\eqref{eq:diff_psi_l},~\eqref{eq:tail_bound_easy} and~\eqref{eq:tail_bound_hard}, we conclude that
  $\Psi_{\ell} - \Psi'_{\ell} \ge 0$ for every non-negative even $\ell \le \ell_0$ implying
  \begin{align*}
    \OPT(P_n^{\ell_0,k_0}(c))
    &\le
    M'
    \le
    M
    =
    \OPT(P_n^{\ell_0}(c)),
  \end{align*}
  which concludes the proof.
\end{proof}

\subsubsection{Linear Program III}

To define the third family of linear programs, we first show that the coefficients of the second family of
linear programs $P_n^{\ell_0,k_0}(c)$ for $\ell_0,k_0$ fixed become independent of $n$ as long as $n$ is
sufficiently large. The main ingredient for the stabilization of the Kostka coefficients is
\cref{claim:kostka_belly}, whereas for Parseval coefficients it will be convenient to work with the following
generalization of shape.

\begin{definition}
  Given a shape $\beta$, an integer $k\geq\htt(\beta)$ and an integer $\ell \ge \lvert \beta \rvert$, we let
  $\xi_{k,\beta,\ell}$ be the (not necessarily valid) shape obtained from $b^n_{k,\beta}$ by removing the rim
  hook of size $n-\ell$ that contains the hand of $b^n_{k,\beta}$, where $n \ge \lvert \beta \rvert + k +
  \beta_1 + 1$, and we let $t_{k,\beta,\ell}$ be the height of this removed rim (note that
  $\xi_{k,\beta,\ell}$ and $t_{k,\beta,\ell}$ do not depend on the choice of $n$). Note that if
  $\lvert\beta\vert = \ell$, then $\xi_{k,\beta,\ell} = \beta$ and $t_{k,\beta,\ell} = k+1$. See
  \cref{fig:derived_shape} for some examples.
\end{definition}

\begin{figure}[htbp]
  \input{derived_shape}
\end{figure}

\begin{lemma}\label{lemma:parseval_coeff_limit}
  Suppose $n \ge \lvert \beta \rvert + k + \beta_1 + 1$ (so that $b^n_{k,\beta}$ is well-defined) and $\ell
  \ge \lvert \beta \rvert$. If $n -\ell > \lvert \beta \rvert + k$, then
  \begin{align*}
    \chi^{b_{k,\beta}^n}((n-\ell))
    & =
    \begin{dcases*}
      0, & if $\xi_{k,\beta,\ell}$ is not a valid shape\\
      (-1)^{t_{k,\beta,\ell}-1} \cdot f_{\xi_{k,\beta,\ell}}, & otherwise.
    \end{dcases*}
  \end{align*}
  In particular, the value above does \emph{not} depend on $n$.
\end{lemma}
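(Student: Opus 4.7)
The plan is to apply the Murnaghan--Nakayama rule (Theorem~2.7) with the single cycle $(n-\ell)$ removed first. Since the remaining conjugacy class has cycle type $(1^{\ell})$ (the identity of $S_\ell$), and $\chi^{\nu}((1^{\ell})) = f_{\nu}$ for any $\nu \vdash \ell$, the rule specializes to
\begin{align*}
  \chi^{b^n_{k,\beta}}((n-\ell))
  & =
  \sum_{\xi} (-1)^{\htt(\xi) - 1} f_{b^n_{k,\beta} \setminus \xi},
\end{align*}
where $\xi$ ranges over rim hooks of size $n-\ell$ in $b^n_{k,\beta}$ whose removal leaves a valid Young diagram. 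The crux of the proof is to show that under the hypothesis $n-\ell > \lvert\beta\rvert + k$ this sum has at most one term, namely the one corresponding to the rim hook containing the hand of $b^n_{k,\beta}$.

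To prove this uniqueness, first I would observe that the total number of cells of $b^n_{k,\beta}$ outside the first row is exactly $\lvert\beta\rvert + k$. If a rim hook $\xi$ of size $n-\ell$ did not meet the first row, then all $n-\ell$ cells of $\xi$ would lie outside the first row, contradicting $n-\ell > \lvert\beta\rvert + k$. Hence $\xi$ contains at least one cell of the first row. For the complement to be a Young diagram, the cells of $\xi$ in the first row must form a right-justified contiguous block, which forces $\xi$ to contain the hand. Since a rim hook is a connected border strip, once we fix that it contains the hand and has prescribed length $n-\ell$, it is uniquely determined by tracing along the rim: this is precisely the rim hook in the definition of $\xi_{k,\beta,\ell}$, whose height is $t_{k,\beta,\ell}$.

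It remains to distinguish the two cases. If the unique candidate rim hook does not exist as a valid rim hook -- i.e., the border-tracing process would require leaving the shape, or the complement fails to be a Young diagram -- then $\xi_{k,\beta,\ell}$ is declared invalid, the Murnaghan--Nakayama sum is empty, and the character evaluates to $0$. Otherwise, the single term contributes $(-1)^{t_{k,\beta,\ell} - 1} \cdot f_{\xi_{k,\beta,\ell}}$, as claimed. For the final assertion of $n$-independence, I would note that varying $n$ (subject to the validity constraint $n \geq \lvert\beta\rvert + k + \beta_1 + 1$) merely lengthens the first row of $b^n_{k,\beta}$ to the right of the hand's column; the hand-containing rim hook of size $n - \ell$ absorbs exactly this extra length, so the non-first-row portion of both $\xi^{\mathrm{rim}}$ and of the complement $\xi_{k,\beta,\ell}$ is unchanged, and the first row of the complement has length depending only on $\ell, \lvert\beta\rvert, k$ (when the rim hook stays in row 1) or vanishes (when it descends below).

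The main technical obstacle is the careful verification that the candidate rim hook--complement pair described purely combinatorially in terms of $(k, \beta, \ell)$ coincides with what the paper's definition of $\xi_{k,\beta,\ell}$ produces for each admissible $n$. This amounts to case-splitting on whether $\ell \geq \lvert\beta\rvert + k + \beta_1 + 1$ (rim hook confined to row 1) versus the complementary range (rim hook descending into the belly and leg), and in each case confirming both the invariance under $n$ and the equality of $\htt$ with $t_{k,\beta,\ell}$; the computation itself is routine once the uniqueness step is established.
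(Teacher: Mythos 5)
Your proposal is correct and follows essentially the same route as the paper: apply Murnaghan--Nakayama by first removing a rim hook of size $n-\ell$, use the counting observation $k + \lvert\beta\rvert < n-\ell$ to force the rim hook to contain the hand (hence be unique), identify the remainder with $\xi_{k,\beta,\ell}$, and note that the $\ell$ remaining fixed points contribute the dimension factor $f_{\xi_{k,\beta,\ell}}$. The only stylistic difference is that you spell out the uniqueness and $n$-independence reasoning a bit more explicitly, whereas the paper leans on these facts being baked into the definitions of $\xi_{k,\beta,\ell}$ and $t_{k,\beta,\ell}$.
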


\begin{proof}
  We apply the Murnaghan--Nakayama rule, \cref{theo:mn_rule}, removing $(n-\ell)$ first. To leave a valid
  shape after this removal, the following must hold:
  \begin{enumerate*}
  \item if a cell is removed in the first row of $b^n_{k,\beta}$, then all cells to its right (including the
    hand) must also be removed.
  \item if a cell is removed in the first column of $b^n_{k,\beta}$, then all cells below it (including the
    foot) must also be removed.
  \end{enumerate*}

  If the hand is not removed, then nothing in the first row was removed. Since there are $k + \lvert \beta
  \rvert < n - \ell$ cells not in the first row, this case is impossible. Hence, the hand must be removed, the
  shape remaining after this removal is $\xi_{k,\beta,\ell}$ and the removed rim hook has height
  $t_{k,\beta,\ell}$. If $\xi_{k,\beta,\ell}$ is a valid shape, then the removal of $(n-\ell)$ gives a sign
  of $(-1)^{t_{k,\beta,\ell}-1}$ and the removal of the remaining $\ell$ fixed points gives a factor of
  $f_{\xi_{k,\beta,\ell}}$. If $\xi_{k,\beta,\ell}$ is not a valid shape, then $\chi^{b_{k,\beta}^n}((n-\ell))
  = 0$.
\end{proof}

Since $\lim_{n\to \infty} \chi^{b_{k,\beta}^n}((n-\ell))$ is well-defined, we can give a name to this limit.

\begin{definition}\label{def:limit_coeff}
  \cref{lemma:parseval_coeff_limit} above states that $\chi^{b_{k,\beta}^n}((n-\ell))$ does not depend on $n$
  as long as $n \ge \max\{\lvert \beta \rvert + k + \beta_1 + 1, \lvert \beta \rvert + k + \ell\}$, so we let
  $\chi^{b_{k,\beta}}_{\ell} \coloneqq \lim_{n\to \infty} \chi^{b_{k,\beta}^n}((n-\ell))$.
\end{definition}

%%%%%%%%%%%%%%%%%%%%%%%%%%%%%%%%%%%%%%%%%%%%%%%%%%%%%%%%
%                   LP 3
%%%%%%%%%%%%%%%%%%%%%%%%%%%%%%%%%%%%%%%%%%%%%%%%%%%%%%%%

The third family of linear programs, which is completely independent of $n$, is defined as follows.

\begin{definition}[Linear Program III]\label{def:lp_3}
  Given a positive odd integer $k_0$, a real $c \in (1,2)$, a non-negative even integer $\ell_0$ and a
  positive even integer $m_0$, we let $P^{\ell_0,k_0,m_0}(c)$ be the following linear program.
  \begin{align}
    \text{minimize}\quad
    &
    M
    \notag
    \\
    \text{s.t.}\quad
    &
    M \ge \Psi_{\ell}
    &
    \forall \ell \le \ell_0 \text{ even},
    \label{eq:lp:m_limit}
    \\
    \text{(Parseval)}\quad
    &
    \Psi_{\ell}
    =
    2 + 2 \cdot \sum_{i=0}^{\ell_0} \sum_{\beta \vdash i} \sum_{\substack{k=\htt(\beta)\\k\geq 1}}^{k_0}
    \chi^{b_{k,\beta}}_{\ell} \cdot x_{k,\beta}
    - 2 \cdot T^{\ell,k_0}(c)
    &
    \forall \ell \le \ell_0 \text{ even},
    \label{eq:lp:parseval_limit}
    \\
    \text{(Young)}\quad
    &
    \sum_{i=0}^{\ell_0} \sum_{\beta \vdash i} \sum_{\substack{k = \htt(\beta)\\k\geq 1}}^{k_0}
    \binom{m}{k+\lvert \beta \rvert} f_{\mu_{k,\beta}} \cdot x_{k,\beta}
    \le
    c^m - 1
    &
    \forall m \le m_0 \text{ even},
    \label{eq:lp:young_limit}
    \\
    &
    x_{k,\beta} \ge 0
    &
    \mathllap{%
      \begin{aligned}[t]
        \forall i \in \{0,\ldots,\ell_0\},\\
        \forall \beta \vdash i,\\
        \forall k \in \{\htt(\beta),\ldots,k_0\},
      \end{aligned}%
    }
    \notag
  \end{align}
  where the variables are $M$, $(\Psi_{\ell})_{\ell}$
  and $(x_{k,\beta})_{k,\beta}$ and we have
  \begin{align*}
    T^{\ell,k_0}(c)
    & \coloneqq
    1.5  \cdot
    \frac{%
      \left(2\left(k_0 + \ell \right) + 4 \right) \left(\frac{c}{2}\right)^{2(k_0 + \ell)+4}
      - 2(k_0 + \ell) \left(\frac{c}{2}\right)^{2(k_0+\ell)+8}
    }{%
      \left(1- \left(\frac{c}{2}\right)^4\right)^2
    }.
  \end{align*}
\end{definition}

We now connect the optimum objective values from the third family to the second family of linear programs.
\begin{proposition}\label{prop:lp_3_pos_then_lp_2_pos}
  Let $c \in (1,2)$ and $\ell_0$ be a non-negative even integer. Let $k_0$ be a positive odd integer such that
  $\ell_0 \le k_0$.

  If $\OPT(P^{\ell_0,k_0,m_0}(c)) > 0$, then there exists an integer $n_0$ large enough such that
  $\OPT(P_n^{\ell_0,k_0}(c)) > 0$ for every odd integer $n \ge n_0$.
\end{proposition}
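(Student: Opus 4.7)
The plan is to convert any feasible solution $(M,\Psi,x)$ of $P_n^{\ell_0,k_0}(c)$ into a feasible solution $(\tilde M,\tilde\Psi,\tilde x)$ of $P^{\ell_0,k_0,m_0}(c)$ satisfying $\tilde M \le M$, for every odd $n$ above some threshold $n_0 = n_0(\ell_0,k_0,m_0)$. Feasibility together with the hypothesis $\OPT(P^{\ell_0,k_0,m_0}(c)) > 0$ then give $\tilde M \ge \OPT(P^{\ell_0,k_0,m_0}(c)) > 0$, whence $M \ge \tilde M > 0$; taking the infimum over feasible solutions yields $\OPT(P_n^{\ell_0,k_0}(c)) > 0$.

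The translation is the obvious one: set $\tilde x_{k,\beta} \coloneqq x_{b^n_{k,\beta}}$ for every LP III index, i.e.\ for $\beta \vdash i$ with $i \in \{0,\ldots,\ell_0\}$ and $k \in \{\max(\htt(\beta),1),\ldots,k_0\}$ (so the unit variable $x_{(n)}$ is dropped), and define $\tilde\Psi_\ell$ and $\tilde M$ through~\eqref{eq:lp:parseval_limit}. Non-negativity of $\tilde x$ is inherited from~\eqref{eq:lp:nonnegative_trunc}. For the Young constraint of $P^{\ell_0,k_0,m_0}(c)$ at an even $m \le m_0$, require $n \ge m_0 + \ell_0 + 1$, so that $m \le n - \beta_1 - 1$ for every relevant $\beta$ and \cref{claim:kostka_belly} gives the exact identity $K_{b^n_{k,\beta},h^n_m} = \binom{m}{k+|\beta|} f_{\mu_{k,\beta}}$. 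The only extra summand in the LP II Young constraint~\eqref{eq:lp:young_trunc} relative to the LP III constraint~\eqref{eq:lp:young_limit} is the $k=0,\beta=\emptyset$ term, which contributes $K_{(n),h^n_m} \cdot x_{(n)} = 1$ by the unit constraint; moving it to the right-hand side produces exactly the LP III Young constraint.

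For the Parseval comparison, \cref{lemma:parseval_coeff_limit} gives $\chi^{b^n_{k,\beta}}((n-\ell)) = \chi^{b_{k,\beta}}_\ell$ whenever $n$ exceeds a threshold depending only on $\ell_0$ and $k_0$. Substituting this identity into~\eqref{eq:lp:parseval_trunc} and comparing with~\eqref{eq:lp:parseval_limit} collapses the two Parseval definitions except for their tails, yielding
\begin{align*}
  \Psi_\ell - \tilde\Psi_\ell & = 2\bigl(T^{\ell,k_0}(c) - T_n^{\ell,k_0}(c)\bigr)
\end{align*}
for every even $\ell \le \ell_0$. Hence the bound $\tilde M \le M$ reduces to the purely analytic tail inequality $T_n^{\ell,k_0}(c) \le T^{\ell,k_0}(c)$.

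This tail inequality is the principal technical obstacle. The first sum defining $T_n^{\ell,k_0}(c)$ is handled by a standard lower bound on central binomial coefficients such as $\binom{2m}{m} \ge 4^m/(2\sqrt m)$, which turns each summand into at most $2\sqrt{k+\ell}\cdot(c/2)^{2(k+\ell)}$; since $c < 2$, the factor $r \coloneqq (c/2)^4 < 1$ converts the sum over odd $k \ge k_0+2$ into an arithmetico-geometric series in $r$ that admits a closed form matching the shape of $T^{\ell,k_0}(c)$. The constant $1.5$ in the definition of $T^{\ell,k_0}(c)$ is chosen to absorb both the crude bound $\sqrt{k+\ell} \le k+\ell$ and the contribution of the second sum in $T_n^{\ell,k_0}(c)$, which has only $\Theta(\ell)$ terms, each of order $(c/2)^{n-1}\sqrt n$ by central binomial asymptotics, and therefore vanishes as $n \to \infty$. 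Verifying these elementary estimates carefully is the only genuinely non-routine step; everything else above is bookkeeping.
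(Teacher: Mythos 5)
Your proof follows essentially the same route as the paper: the translated solution $\tilde x_{k,\beta}\coloneqq x_{b^n_{k,\beta}}$, the matching of the Young constraints via \cref{claim:kostka_belly} once $n \ge m_0+\ell_0+1$ (with the $k=0$, $\beta=\emptyset$ term absorbed into $c^m-1$), and the identity $\Psi_\ell-\tilde\Psi_\ell = 2(T^{\ell,k_0}(c)-T_n^{\ell,k_0}(c))$ via \cref{lemma:parseval_coeff_limit} are all exactly what the paper does. The one small deviation is at the finish line: you reduce to the pointwise inequality $T_n^{\ell,k_0}(c)\le T^{\ell,k_0}(c)$ for all sufficiently large odd $n$ and then compare optima directly, whereas the paper only proves the weaker statement $\lim_n T_n^{\ell,k_0}(c)\le T^{\ell,k_0}(c)$ (\cref{claim:tail_bounding_tail}) and works around it with a $\liminf$/$\limsup$ argument involving the argmax indices $\widehat\ell_n,\ell_n$. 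Your pointwise claim does hold, precisely because the factor $1.5$ in $T^{\ell,k_0}(c)$ strictly dominates $\tfrac{\sqrt\pi}{2}e^{2/(15(k_0+\ell+2))}$ so the partial sum stays strictly below $T^{\ell,k_0}(c)$ and the second (vanishing) sum can be absorbed for large $n$; this makes the conclusion cleaner than the paper's. Two cosmetic nits: the threshold $n_0$ does depend on $c$ as well as on $\ell_0,k_0,m_0$ (the rate at which the second sum of $T_n^{\ell,k_0}(c)$ vanishes is $c$-dependent), and the $n\ge 2\ell_0+k_0+1$ requirement for the Parseval coefficients should be folded into $n_0$ alongside $m_0+\ell_0+1$.
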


\begin{proof}
  Fix a positive odd integer $n$. Suppose $s_n \coloneqq (M^n, (\Psi_{\ell}^n)_{\ell},
  (x_{b^n_{k,\beta}}^n)_{k,\beta})$ is an optimum solution of $P_n^{\ell_0,k_0}(c)$.

  We will construct a solution $\widehat{s}_n \coloneqq (\widehat{M}^n, (\widehat{\Psi}_{\ell}^n)_{\ell},
  (\widehat{x}^n_{k,\beta})_{k,\beta})$ of $P^{\ell_0,k_0,m_0}(c)$. Set $\widehat{x}^n_{k,\beta} \coloneqq
  x^n_{b^n_{k,\beta}}$. We compute $\widehat{\Psi}^n$ from $\widehat{x}^n$ using~\eqref{eq:lp:parseval_limit}
  and we let $\widehat{M}^n \coloneqq \max_{\ell} \widehat{\Psi}_{\ell}^n$.

  Using \cref{claim:kostka_belly}, we have that the restrictions~\eqref{eq:lp:young_limit} are exactly the
  same as the restrictions~\eqref{eq:lp:young_trunc} (since $f_{\mu_{0,()}} = x^n_{(1^n)} = 1$). Hence,
  $\widehat{s}_n$ is a feasible solution of $P^{\ell_0,k_0,m_0}(c)$.

  Now we compare the objective values $\widehat{M}^n$ and $M^n$. For this, it is enough to compare
  $\widehat{\Psi}^n_{\ell}$ and $\Psi^n_{\ell}$ for every non-negative even integer $\ell \le \ell_0$. By
  \cref{lemma:parseval_coeff_limit} and \cref{def:limit_coeff}, for $n\geq 2\ell_0 + k_0 + 1$, we have
  \begin{align}\label{eq:Psi_diff}
    \widehat{\Psi}^n_{\ell} - \Psi^n_{\ell}
    & =
    - 2 \cdot T^{\ell,k_0}(c) + 2 \cdot T_n^{\ell,k_0}(c).
  \end{align}
  A straightforward computation done in \cref{claim:tail_bounding_tail} (in \cref{app:tail_bounds})
  establishes that
  \begin{align}\label{eq:tail_limit_bound}
    \lim_{\substack{n\to \infty \\ \text{odd}}} T_n^{\ell,k_0}(c)
    & \le
    T^{\ell,k_0}(c).
  \end{align}

  For every positive odd integer $n$, let $\widehat{\ell}_n,\ell_n$ be such that
  \begin{align*}
    \widehat{\Psi}^n_{\widehat{\ell}_n}
    & =
    \max_{\substack{\ell \le \ell_0\\\ell\text{ even}}}\widehat{\Psi}^n_{\ell},
    &
    \Psi^n_{\ell_n}
    & =
    \max_{\substack{\ell \le \ell_0\\\ell\text{ even}}} \Psi^n_{\ell},
  \end{align*}
  then we have
  \begin{align*}
    \widehat{M}^n - M^n
    & =
    \widehat{\Psi}^n_{\widehat{\ell}_n} - \Psi^n_{\ell_n}
    =
    \widehat{\Psi}^n_{\widehat{\ell}_n} - \Psi^n_{\widehat{\ell}_n}
    + \Psi^n_{\widehat{\ell}_n} - \Psi^n_{\ell_n}
    \leq
    \widehat{\Psi}^n_{\widehat{\ell}_n} - \Psi^n_{\widehat{\ell}_n},
  \end{align*}
  hence
  \begin{align*}
  \limsup_{\substack{n\to\infty\\\text{odd}}}(\widehat{M}^n - M^n)
  \le
  \limsup_{\substack{n\to\infty\\\text{odd}}}
  (\widehat{\Psi}^n_{\widehat{\ell}_n} - \Psi^n_{\widehat{\ell}_n})
  \le
  0,
  \end{align*}
  where the last inequality follows from~\eqref{eq:Psi_diff} and~\eqref{eq:tail_limit_bound}. Therefore, we
  obtain
  \begin{align*}
    \OPT(P^{\ell_0,k_0,m_0}(c))
    & \le
    \liminf_{\substack{n \to \infty \\\text{odd}}} \widehat{M}^n
    \leq
    \liminf_{\substack{n \to \infty \\ \text{odd}}} M^n
    =
    \liminf_{\substack{n \to \infty \\ \text{odd}}} \OPT(P^{\ell_0,k_0}_n(c)),
  \end{align*}
  which implies the result.
\end{proof}

The results of this section culminate in the next theorem, which reduces the problem of asymptotically upper
bounding $\alpha(\cB_n)$ to showing that a \emph{single} linear program from the third family \cref{def:lp_3}
has a positive objective value for a given $c \in (1,2)$.

\begin{theorem}\label{theo:lp_main}
  Let $c_0 > 1$, $\ell_0$ be a non-negative even integer, $k_0$ be a positive odd integer and $m_0$ be a
  positive even integer. Suppose we have $\OPT(P^{\ell_0,k_0,m_0}(c_0)) > 0$. Then for every $c \in (1,c_0)$,
  there exists $n_0\coloneqq n_0(c_0,c,\ell_0,k_0,m_0) \in \NN$ such that for every integer $n \ge n_0$
  \begin{align*}
    \alpha(\cB_n) & \le 2\cdot \frac{n!}{c^{n-1}}.
  \end{align*}
\end{theorem}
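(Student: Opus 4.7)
The plan is to chain the three propositions built in this subsection --- Propositions \ref{prop:lp_3_pos_then_lp_2_pos}, \ref{prop:lp_2_pos_implies_lp_1_pos}, and \ref{prop:up_bound_alpha_birk} --- which were designed precisely so that positivity of the $n$-independent linear program $P^{\ell_0,k_0,m_0}(c_0)$ propagates first to positivity of $P_n^{\ell_0,k_0}(c_0)$, then to positivity of $P_n^{\ell_0}(c_0)$, and finally to an upper bound on $\alpha(\cB_n)$. All the substance is already in those propositions; the job here is compositional bookkeeping plus tracking how the threshold on $n$ accumulates.

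First, I will feed the hypothesis $\OPT(P^{\ell_0,k_0,m_0}(c_0)) > 0$ into Proposition \ref{prop:lp_3_pos_then_lp_2_pos} to obtain a threshold $n_0^{(1)}$ such that $\OPT(P_n^{\ell_0,k_0}(c_0)) > 0$ for every odd $n \ge n_0^{(1)}$. Note that the hypotheses of that proposition ($c_0 \in (1,2)$, $\ell_0$ non-negative even, $k_0$ positive odd with $\ell_0 \le k_0$) match the setup of the theorem (recall $P^{\ell_0,k_0,m_0}(c_0)$ is only defined for $c_0\in(1,2)$). Next, Proposition \ref{prop:lp_2_pos_implies_lp_1_pos} requires the side condition $\ell_0 \le k_0 \le (n-\ell_0-3)/2$; the left inequality is already our assumption and the right one is automatic once $n \ge 2k_0 + \ell_0 + 3$. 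Setting
\begin{align*}
  n_0^{(2)} & \coloneqq \max\bigl\{n_0^{(1)},\; 2k_0 + \ell_0 + 3\bigr\},
\end{align*}
we obtain $\OPT(P_n^{\ell_0}(c_0)) \ge \OPT(P_n^{\ell_0,k_0}(c_0)) > 0$ for every odd $n \ge n_0^{(2)}$, which is exactly the hypothesis needed to apply the last proposition.

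Finally, I will invoke Proposition \ref{prop:up_bound_alpha_birk} with threshold $n_0^{(2)}$: for each fixed $c \in (1, c_0)$, every integer $n$ with $n \ge 1 + n_0^{(2)}/(1-\log_{c_0}(c))$ satisfies $\alpha(\cB_n) \le 2\cdot n!/c^{n-1}$, which is the desired conclusion with $n_0 \coloneqq \lceil 1 + n_0^{(2)}/(1-\log_{c_0}(c))\rceil$. There is no substantive obstacle; the only care required is to verify that the parity and structural constraints line up at each hand-off (odd $n$ is preserved throughout, $\ell_0$ remains even, $k_0$ remains a fixed positive odd integer with $\ell_0 \le k_0$) and that the final dependence $n_0 = n_0(c_0,c,\ell_0,k_0,m_0)$ correctly absorbs both $n_0^{(1)}$ (produced inside Proposition \ref{prop:lp_3_pos_then_lp_2_pos}) and the explicit lower bound $2k_0 + \ell_0 + 3$.
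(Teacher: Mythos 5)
Your proof is correct and follows the same route as the paper's proof: chain \cref{prop:lp_3_pos_then_lp_2_pos}, \cref{prop:lp_2_pos_implies_lp_1_pos}, and \cref{prop:up_bound_alpha_birk} in order, tracking the threshold on $n$ at each hand-off. Your version is a bit more explicit about how the thresholds accumulate, and you correctly flag that the argument implicitly relies on $\ell_0 \le k_0$, a side condition required by the first two propositions that the theorem statement (and the paper's own proof) does not record.
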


\begin{proof}
  Given a linear program $P^{\ell_0,k_0,m_0}(c_0)$ in the third family such that
  $\OPT(P^{\ell_0,k_0,m_0}(c_0)) > 0$, by \cref{prop:lp_3_pos_then_lp_2_pos} the linear program
  $P_n^{\ell_0,k_0}(c_0)$ in the second family satisfies $\OPT(P_n^{\ell_0,k_0}(c_0)) > 0$ for $n$
  sufficiently large. By \cref{prop:lp_2_pos_implies_lp_1_pos}, this in turn implies that the linear program
  $P_n^{\ell_0}(c_0)$ in the first family satisfies $\OPT(P_n^{\ell_0}(c_0)) > 0$ for $n$ sufficiently large.
  Finally, the bound on $\alpha(\cB_n)$ follows from \cref{prop:up_bound_alpha_birk}.
\end{proof}

% LocalWords:  pseudorandom Birkhoff

\section{Computational Part}\label{sec:computational}

From \cref{theo:lp_main}, to claim an asymptotic upper bound of $\alpha(\cB_n) \le K \cdot n!/(c-o_n(1))^n$
with $K =2c$, it is enough to find a single choice of parameters $\ell_0$, $k_0$ and $m_0$ that makes the
linear program $P^{\ell_0,k_0,m_0}(c)$ (from \cref{def:lp_3}) have positive optimum value. However, a full
theoretical analysis of these linear programs remains elusive. Nevertheless, we can still computationally
solve these programs for various specific choices of parameters. The closer the target $c$ is to $2$, the
larger the parameter $\ell_0$ (and $k_0$) needs to be to yield a positive optimum value. This computational
approach poses its own challenges and getting $c = 1.971$ requires careful consideration on how to solve these
linear programs.

The first challenge is the poor dependence of the size of these programs on the parameter $\ell_0$, both in
terms of number of variables and bit complexity of coefficients. More specifically, the number of variables
grows at least as fast as the number of partitions of $\ell_0$, which is asymptotically
\begin{align*}
  \frac{1}{4 \ell_0 \sqrt{3}} \cdot \exp\left(\pi \sqrt{\frac{2 \ell_0}{3}}\right),
\end{align*}
by a celebrated theorem of Hardy--Ramanujan~\cite{HardyR18}.

The second challenge is that the linear programs $P^{\ell_0,k_0,m_0}(c)$ seem to be very sensitive to
numerical rounding errors preventing the use of conventional LP solvers. We believe that such sensitivity
comes from the large difference in magnitude of the linear program coefficients (e.g., Kostka constants, see
\cref{claim:kostka_belly}). To avoid approximation errors and obtain an exact optimum solution, we implemented
the Simplex method with support to exact rational computations (note that this is enough since
$P^{\ell_0,k_0,m_0}(c)$ has rational coefficients as long as $c \in \mathbb{Q}$). Our implementation is
available at
\href{https://github.com/lenacore/birkhoff_code}{\url{https://github.com/lenacore/birkhoff_code}}.

In this section, we address these challenges.

\subsection{Dual Linear Program}

We actually solve the dual of the third linear program \cref{def:lp_3}, which is presented in
\cref{def:dual}. By strong linear programming duality, the dual program has positive optimum value if and only
if the primal has positive optimum value, so there is no loss in working with the dual program.

\begin{definition}[Dual Linear Program]\label{def:dual}
  Given a positive odd integer $k_0$, $c \in (1,2)$, a non-negative even integer $\ell_0$ and a positive even
  integer $m_0$, we let $D^{\ell_0,k_0,m_0}(c)$ be the following linear program.
  \begin{align}
    \text{maximize}\quad
    &
    2
    - 2\cdot \sum_{\substack{\ell=0\\ \ell\text{ even}}}^{\ell_0} T^{\ell,k_0}(c) \cdot w_{\ell}
    - \sum_{\substack{m \le m_0\\ m\text{ even}}} (c^m-1) \cdot y_m
    \notag
    \\
    \text{s.t.}\quad
    &
    \sum_{\substack{\ell=0\\ \ell\text{ even}}}^{\ell_0} w_{\ell} = 1
    \notag
    \\
    \text{(Restriction $b_{k,\beta}$)}\quad
    &
    2\cdot \sum_{\substack{\ell=0\\ \ell\text{ even}}}^{\ell_0} \chi^{b_{k,\beta}}_{\ell} \cdot w_{\ell}
    + \sum_{\substack{m \le m_0\\ m\text{ even}}} \binom{m}{k+\lvert \beta \rvert} f_{\mu_{k,\beta}} \cdot y_m \ge
    0
    &
    \begin{aligned}[t]
      \forall i \in \{0,\ldots,\ell_0\},\\
      \forall \beta \vdash i,\\
      \forall k \in [k_0], k \ge \htt(\beta),
    \end{aligned}
    \label{ineq:dual:partition}
    \\
    &
    w_{\ell} \ge 0
    &
    \forall \ell \le \ell_0 \text{ even},
    \notag
    \\
    &
    y_m \ge 0
    &
    \forall m \le m_0 \text{ even},
    \notag
  \end{align}
  where the variables are $(w_\ell)_\ell$ and $(y_m)_m$ and  $T^{\ell,k_0}(c)$ is as in \cref{def:lp_3}.
\end{definition}

There are two reasons for working with the dual linear program. First, we will be able to replace several
inequalities corresponding to shapes $b_{k,\beta}$ with large leg $k$ by a few provably more stringent
inequalities, an approach that we dub ``\emph{joint large leg}'' and is carried out in
\cref{subsec:joint_large_leg}. The second reason is due to a heuristic to speed the computation, called
``\emph{fragmented heuristic}'', which is explained in \cref{subsec:speeding_comp}.

\subsection{Joint Large Leg}\label{subsec:joint_large_leg}

To reduce the number of restrictions of the dual linear program of \cref{def:dual}, for a given $k \ge \ell_0$
and $s$ we replace all restrictions associated to partitions $b_{k,\beta}$, where $\lvert \beta \rvert = s$,
with a single more stringent restriction. Note that this modification can only decrease the objective value,
which still allows us to deduce asymptotic upper bounds on $\alpha(\cB_n)$ using \cref{theo:lp_main}.

\begin{lemma}
  Let $k \ge \ell_0 \ge s$ be non-negative integers with $k \ge 1$ and $(y_m)_m$ be non-negative. The inequality
  \begin{align*}
    2\cdot (-1)^k \cdot w_s + \sum_{\substack{m \le m_0\\ m\text{ even}}} \binom{m}{k+s}  \cdot y_m & \ge 0
  \end{align*}
  implies that the inequalities~\eqref{ineq:dual:partition} associated with $b_{k,\beta}$ in \cref{def:dual} for every $\beta \vdash s$
  are satisfied, i.e.,
  \begin{equation}\label{eq:dual_part_ineq}
    2\cdot \sum_{\substack{\ell=0\\ \ell\text{ even}}}^{\ell_0} \chi^{b_{k,\beta}}_{\ell} \cdot w_{\ell}
    + \sum_{\substack{m \le m_0\\ m\text{ even}}} \binom{m}{k+\lvert \beta \rvert} f_{\mu_{k,\beta}} \cdot y_m \ge 0.
  \end{equation}
\end{lemma}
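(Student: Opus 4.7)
The plan is to evaluate the character sum $\sum_\ell \chi^{b_{k,\beta}}_\ell w_\ell$ on the left-hand side of \eqref{eq:dual_part_ineq} in closed form, and then derive \eqref{eq:dual_part_ineq} from the hypothesis by splitting on the parity of $k$. The key observation is that, under the size restrictions $k \ge \ell_0 \ge s$, the coefficients $\chi^{b_{k,\beta}}_\ell$ vanish for every even $\ell \le \ell_0$ except $\ell = s$, which reduces the statement to a one-term comparison.

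More concretely, I would first establish the vanishing as follows. When $\ell < s = \lvert\beta\rvert$, the belly is larger than the removed cycle and Claim~\ref{claim:large_belly} gives $\chi^{b^n_{k,\beta}}((n-\ell)) = 0$. When $\ell > s$, the bound $\ell \le \ell_0 \le k$ places us in the ``thin balanced'' regime of Claim~\ref{claim:thin_balanced} (together with $k \le n - s - \ell - 1$, which holds for all sufficiently large $n$, so that we can pass to the limit in Definition~\ref{def:limit_coeff}), again yielding $\chi^{b_{k,\beta}}_\ell = 0$. Since $w_s$ appears in the dual linear program only for even $s$, the only surviving term is $\ell = s$, where Claim~\ref{claim:char_single_cycle} gives $\chi^{b_{k,\beta}}_s = (-1)^k f_\beta$. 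The left-hand side of \eqref{eq:dual_part_ineq} therefore simplifies to
\[
2(-1)^k f_\beta \cdot w_s + \sum_{\substack{m \le m_0\\ m\text{ even}}} \binom{m}{k+s} f_{\mu_{k,\beta}} \cdot y_m.
\]

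To conclude, I would split on the parity of $k$. If $k$ is even the displayed expression is manifestly non-negative, using $w_s, y_m, f_\beta, f_{\mu_{k,\beta}} \ge 0$. If $k$ is odd, the hypothesis rearranges as $2 w_s \le \sum_m \binom{m}{k+s} y_m$; multiplying through by $f_\beta \ge 0$ and then invoking the inequality $f_\beta \le f_{\mu_{k,\beta}}$ (which is precisely the dimension comparison used in Claim~\ref{claim:kostka_belly}, and which holds because $\beta$ is obtained from $\mu_{k,\beta}$ by deleting its first column) yields
\[
2 f_\beta w_s \;\le\; f_\beta \sum_{\substack{m \le m_0\\ m\text{ even}}} \binom{m}{k+s} y_m \;\le\; \sum_{\substack{m \le m_0\\ m\text{ even}}} \binom{m}{k+s} f_{\mu_{k,\beta}} y_m,
\]
which rearranges to exactly \eqref{eq:dual_part_ineq}.

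The only real care needed is to apply the correct vanishing claim in each regime and to verify that the finite-$n$ conditions of Claims~\ref{claim:large_belly} and~\ref{claim:thin_balanced} hold uniformly in the $n\to\infty$ limit defining $\chi^{b_{k,\beta}}_\ell$ (in particular the subtle lower bound on $n$ in Claim~\ref{claim:thin_balanced}). Once this simplification is in hand, the parity-of-$k$ case analysis is essentially immediate.
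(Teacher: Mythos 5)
Your proof is correct and follows essentially the same route as the paper's. The paper reaches the same one-term reduction via the unified \cref{lemma:parseval_coeff_limit} (arguing that, since $k\ge\ell_0\ge\ell$, the rim hook must pass through the first column, so $\xi_{k,\beta,\ell}$ is valid precisely when the foot is removed, i.e.\ when $\lvert\beta\rvert=\ell$), whereas you reconstruct the same vanishing by invoking \cref{claim:large_belly}, \cref{claim:thin_balanced}, and \cref{claim:char_single_cycle} separately; both arguments then conclude identically using $y_m\ge 0$ and $f_\beta\le f_{\mu_{k,\beta}}$.
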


\begin{proof}
  Since $k \ge \ell_0 \ge \ell$, to obtain $\xi_{k,\beta,\ell}$ from some $b_{k,\beta}^n$ we must have removed
  some cell in the first column; thus $\xi_{k,\beta,\ell}$ is a valid shape if and only if the foot of
  $b_{k,\beta}^n$ was removed, which in turn is equivalent to $\lvert \beta \rvert =\ell$. By
  \cref{lemma:parseval_coeff_limit}, inequality~\eqref{eq:dual_part_ineq} becomes
  \begin{align*}
    2\cdot (-1)^k \cdot f_{\beta} \cdot w_s
    +
    \sum_{\substack{m \le m_0\\ m\text{ even}}}\binom{m}{k+\lvert \beta \rvert} f_{\mu_{k,\beta}} \cdot y_m
    & \ge
    0.
  \end{align*}
  Then the result follows by noticing that the $y_m$ are non-negative and $f_{\mu_{k,\beta}} \ge f_{\beta}$
  since $\beta$ is contained in $\mu_{k,\beta}$.
\end{proof}

\subsection{Speeding the Computation}\label{subsec:speeding_comp}

We briefly explain three heuristics used to speed the computations. We stress that with any combination of
these heuristics if the objective value of the resulting linear program is positive, then the objective value
of the original dual linear program (\cref{def:dual}) is also positive.

\begin{itemize}
\item The first heuristic consists in setting some $y_m$ to $0$ (this corresponds to dropping restrictions
  in the primal) to decrease the size of the problem. Note that this can only decrease the optimum value.
\item To reduce the bit complexity of the program, we round up $(c^m-1)$ in the objective and we round down
  the Kostka constant $\binom{m}{k+\lvert \beta \rvert} f_{\mu_{k,\beta}}$. Similarly, this modification can
  only decrease the optimum value.
\item The final heuristic consists in solving a small ``fragment'' of the dual linear program containing much
  fewer restrictions. Of course, this can increase the optimum value. However, we can then check if an optimum
  solution to this fragment problem is feasible (therefore optimum) for original dual linear program. For
  reference, our best result used a fragment containing only restrictions associated with partitions
  $b^n_{k,\beta}$ for $\beta$ having height at most $1$ or being the partition $(1,1)$. This suggests that
  some optimum solutions of the dual program might have enough structure to be analyzable completely
  symbolically.
\end{itemize}

\subsection{Computational Results}

We finish this section by presenting some computational results in \cref{table:empirical_runs}, which contains
some parameters for which the dual linear program has positive optimum value.

\begin{table}[htb]
  \begin{center}
  \begin{tabular}{*{4}{>{$}c<{$}}}
    \ell_0 & c & k_0\\\hline
    0  & 1.49  & 19\\
    2  & 1.69  & 29\\
    4  & 1.72  & 29\\
    6  & 1.78  & 39\\
    8  & 1.80  & 39\\
    10 & 1.82  & 49\\
    12 & 1.85  & 59\\
    14 & 1.87  & 79\\
    % above this line no heuristic
    20 & 1.90  & 89  & * \\
    30 & 1.93  & 139 & *\\
    % above this line no heuristic except frag
    50 & 1.95  & 199 & **\\
    70 & 1.97  & 539 & **\\ % may need to be updated
    74 & 1.971 & 469 & **
  \end{tabular}
  \caption{List of parameters that yield positive optimum values for $D^{\ell_0,k_0,m_0}(c)$ (in all cases, we
    take $m_0 = 2(\ell_0+k_0)$). Entries marked with $*$ were computed using the fragment heuristic. Entries
    marked with $**$ were computed using all three heuristics and joint large leg. For reference, the instance
    with $\ell_0=74$ has approximately $3\times 10^{9}$ restrictions even after the joint large leg heuristic
    (before, the number was approximately $2.4 \times 10^{10}$), whereas the number of restrictions for
    $\ell_0=14$ and $\ell_0=30$ are approximately $3.8 \times 10^4$ and $3 \times 10^6$,
    respectively.}\label{table:empirical_runs}
  \end{center}
\end{table}

We remark that solving $P^{\ell_0,k_0,m_0}(c)$ (or $D^{\ell_0,k_0,m_0}(c)$) with $\ell_0=0$ can be viewed as
(essentially) the Kane--Lovett--Rao approach~\cite{lovett17}. In this case, we obtain an improved $c = 1.49$
over $c = \sqrt{2}$ from~\cite{lovett17} since we work with slightly stronger inequalities. It is interesting
to see that for $\ell_0=0$ making $k_0 > 19$ does not allow us to obtain a larger $c$ for which the dual has
positive optimum value. This means that increasing $\ell_0$ is crucial to obtain better values of $c$.

Combining the theoretical results from \cref{subsec:lp} and the computational results of this section, we
obtain our main result.

\TheoMainIndUB*

\begin{proof}
  Follows from \cref{theo:lp_main} and $\OPT(D^{\ell_0,k_0,m_0}(1.971)) > 0$ for $\ell_0 =74$, $k_0=469$ and
  $m_0=1086$.
\end{proof}

\section{Explicit Constructions}\label{sec:construction}

In this section we provide explicit constructions of independent sets and proper colorings of the Birkhoff
graph. Although~\cite{lovett17} only constructs independent sets and only when $n$ is a power of $2$, our
constructions build on similar ideas. However, since we adopt a simpler group theoretical language, this
enables us to achieve a modest improvement of an $n/2$ factor whenever $n$ is a power of $2$. Even though an
explicit independent set achieving the same bound can be deduced from our coloring, we first directly present
an independent set construction as it is simpler and serves as a warm up for the coloring construction.

\subsection{Independent Set}

We start by presenting in \cref{lemma:indep_set_basic} the recursive step of a construction of an independent
set that works in any size $n$. Such construction step will later be improved in
\cref{lemma:indep_set_improved} by a factor of $2$ conditioned on $n$ being divisible by $4$.

\begin{lemma}\label{lemma:indep_set_basic}
  Let $n \ge 2$ be an integer and suppose $A$ is an independent set of $\cB_{\lceil n/2 \rceil}$. Then the
  following is an independent set of $\cB_n$ (under the natural inclusion of $S_{\lfloor n/2 \rfloor} \times
  S_{\lceil n/2 \rceil}$ in $S_n$)
  \begin{align*}
    A'
    & \coloneqq
    \{ (\sigma, (\sigma')^{-1} \tau) \in S_{\lfloor n/2\rfloor}\times S_{\lceil n/2\rceil}\mid
    \sigma \in S_{\lfloor n/2 \rfloor}, \tau \in A \},
  \end{align*}
  where $\sigma'$ is the natural extension of $\sigma$ to $[\lceil n/2 \rceil]$ (by possibly fixing $\lceil
  n/2 \rceil$). In particular, we have
  \begin{align*}
    \lvert A' \rvert & = \left\lfloor \frac{n}{2} \right\rfloor! \cdot \lvert A \rvert.
  \end{align*}
\end{lemma}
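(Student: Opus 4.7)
The plan is to show that $A'$ is independent by a straightforward cycle structure analysis, exploiting that every element of $A'$ lies in the block-diagonal subgroup $S_{\lfloor n/2\rfloor}\times S_{\lceil n/2\rceil}\subseteq S_n$. The cardinality claim is immediate once I verify the parametrization $(\sigma,\tau)\mapsto (\sigma,(\sigma')^{-1}\tau)$ is injective, since distinct $(\sigma,\tau)\ne(\bar\sigma,\bar\tau)$ with $\sigma=\bar\sigma$ forces $\tau\ne\bar\tau$.

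Take two elements $\pi = (\sigma_1,(\sigma'_1)^{-1}\tau_1)$ and $\pi' = (\sigma_2,(\sigma'_2)^{-1}\tau_2)$ of $A'$ with $\pi\neq\pi'$, and compute
\begin{align*}
\pi (\pi')^{-1} = \bigl(\sigma_1\sigma_2^{-1},\ (\sigma'_1)^{-1}\tau_1\tau_2^{-1}\sigma'_2\bigr).
\end{align*}
The key observation is that this product preserves the partition $[\lfloor n/2\rfloor]\sqcup ([n]\setminus[\lfloor n/2\rfloor])$, so every cycle in its cycle decomposition lies entirely in one of the two blocks. Hence, for $\pi(\pi')^{-1}$ to be a single cycle of length $\geq 2$ (i.e., in $\cC_n$), exactly one block must act as a single nontrivial cycle while the other acts as the identity. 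I will split into these two subcases.

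In the first subcase, $\sigma_1\sigma_2^{-1}$ is a single cycle on the first block and $(\sigma'_1)^{-1}\tau_1\tau_2^{-1}\sigma'_2 = \id$ on the second block. The latter gives $\tau_1\tau_2^{-1} = \sigma'_1(\sigma'_2)^{-1}$, and the right-hand side is precisely the cycle $\sigma_1\sigma_2^{-1}$ extended by fixing $\lceil n/2\rceil$ when $n$ is odd (or equal to it when $n$ is even); either way a single cycle in $S_{\lceil n/2\rceil}$. But then $\tau_1,\tau_2\in A$ would be adjacent in $\cB_{\lceil n/2\rceil}$, contradicting that $A$ is independent (note also $\sigma_1\ne\sigma_2$ forces $\tau_1\ne\tau_2$, so this is a true edge). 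In the second subcase, $\sigma_1=\sigma_2$ (so $\sigma'_1=\sigma'_2$) and $(\sigma'_1)^{-1}\tau_1\tau_2^{-1}\sigma'_1$ is a single cycle; since this element is conjugate to $\tau_1\tau_2^{-1}$, the latter is also a single cycle, again contradicting independence of $A$ (and using $\pi\ne\pi'$ to get $\tau_1\ne\tau_2$).

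There is no real obstacle here — the only subtlety to be careful about is the correct bookkeeping of the extension $\sigma\mapsto\sigma'$ when $n$ is odd, specifically that $\sigma'_1(\sigma'_2)^{-1}$ fixes $\lceil n/2\rceil$ and therefore has the same cycle type as $\sigma_1\sigma_2^{-1}$ viewed in $S_{\lceil n/2\rceil}$. The whole argument is just a verification that the twist $(\sigma')^{-1}\tau$ is exactly what is needed to couple the two blocks so that a potential single cycle arising in the first block transfers to an edge on $\tau_1,\tau_2$ inside $A$.
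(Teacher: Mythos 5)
Your proof is correct and follows essentially the same approach as the paper's: observe that $\pi(\pi')^{-1}$ respects the block decomposition so a single cycle must live entirely in one block with the identity on the other, then show in each of the two resulting cases that $\tau_1\tau_2^{-1}$ is forced to be a single cycle, contradicting independence of $A$. The only cosmetic difference is the order in which you list the two cases and a slightly more explicit handling of the extension $\sigma\mapsto\sigma'$.
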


\begin{proof}
  Note that in $S_{\lfloor n/2\rfloor}\times S_{\lceil n/2\rceil}$ a single cycle must either act only on the
  first part or only on the second part. This means that if $(\sigma_1, (\sigma'_1)^{-1} \tau_1), (\sigma_2,
  (\sigma'_2)^{-1} \tau_2) \in A'$ are adjacent in $\cB_n$, then either $\sigma_1 = \sigma_2$ and
  $(\sigma_1')^{-1} \tau_1 \cdot \tau_2^{-1} \sigma_2'$ is a single cycle; or $\sigma_1\cdot\sigma_2^{-1}$ is
  a single cycle and $(\sigma_1')^{-1}\tau_1 = (\sigma_2')^{-1}\tau_2$.

  In the first case, since we also have $\sigma_1' = \sigma_2'$, it follows that $\tau_1\tau_2^{-1}$ must also
  be a single cycle, contradicting the assumption that $A$ is independent in $\cB_{\ceil{n/2}}$. In the second
  case, we have $\tau_1\cdot\tau_2^{-1} = \sigma_1'\cdot(\sigma_2')^{-1}$, which must be a single cycle (as
  $\sigma_1\cdot\sigma_2^{-1}$ is so), generating the same contradiction.
\end{proof}

\begin{lemma}\label{lemma:indep_set_improved}
  Let $n \ge 4$ be an integer divisible by $4$, let $\gamma$ be the product of transpositions $\gamma\coloneqq
  \prod_{i=1}^{n/2} (i, n/2 + i)$ and suppose $A$ is an independent set of $\cB_{n/2}$ containing only
  permutations of positive sign. Let
  \begin{align*}
    A'
    & \coloneqq
    \{ (\sigma, \sigma^{-1} \tau) \in S_{n/2}\times S_{n/2}\mid
    \sigma \in S_{n/2}, \tau \in A \}.
  \end{align*}
  Then $A' \cup \gamma A'$ is an independent set of $\cB_n$ (under the natural inclusion of $S_{n/2} \times
  S_{n/2}$ in $S_n$) containing only permutations of positive sign. In particular, we have
  \begin{align*}
    \lvert A'\cup \gamma A' \rvert & = 2\left(\frac{n}{2}\right)! \cdot \lvert A \rvert.
  \end{align*}
\end{lemma}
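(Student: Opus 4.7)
The plan is to verify three things: that $A'$ and $\gamma A'$ are each internally independent, that they are disjoint, and that no edge crosses between them. The overall size formula $|A'\cup\gamma A'|=2(n/2)!\,|A|$ will then follow from disjointness, since $|A'|=(n/2)!\,|A|$ is immediate from the fact that the first coordinate $\sigma$ and the second coordinate $\sigma^{-1}\tau$ together recover $\sigma$ and $\tau$ uniquely.

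First I would handle the sign and independence of $A'$ essentially by quoting the argument of \cref{lemma:indep_set_basic} (specialized to the case where the natural extension $\sigma'$ equals $\sigma$, since now the two factors have the same size): the product $(\sigma_1,\sigma_1^{-1}\tau_1)(\sigma_2,\sigma_2^{-1}\tau_2)^{-1}$ equals $(\sigma_1\sigma_2^{-1},\ \sigma_1^{-1}\tau_1\tau_2^{-1}\sigma_2)$, and being a single cycle in $S_{n/2}\times S_{n/2}\subseteq S_n$ forces one component to be the identity and the other to be conjugate to $\tau_1\tau_2^{-1}$ or equal to $\sigma_1\sigma_2^{-1}=\tau_1\tau_2^{-1}$; either way $\tau_1\tau_2^{-1}$ is a single cycle, contradicting independence of $A$. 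The sign of every element of $A'$ is $\sgn(\sigma)\sgn(\sigma^{-1})\sgn(\tau)=\sgn(\tau)=1$. Since $n$ is divisible by $4$, $\gamma$ is a product of $n/2$ transpositions with $n/2$ even, so $\sgn(\gamma)=1$, and multiplication by $\gamma$ is an automorphism of $\cB_n$ by \cref{remark:birkhoff_auto}, so $\gamma A'$ is also independent and still has only positive signs.

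Disjointness is immediate: every element of $A'$ preserves the partition $\{[n/2],\{n/2+1,\dots,n\}\}$ setwise, while every element of $\gamma A'$ swaps the two blocks. For the crossing edges, the main obstacle, suppose for contradiction $\pi_1\in A'$ and $\pi_2\in A'$ with $\pi_1(\gamma\pi_2)^{-1}=\pi_1\pi_2^{-1}\gamma$ a single cycle. Writing $\pi_1\pi_2^{-1}=(\alpha,\beta)$ with $\alpha=\sigma_1\sigma_2^{-1}$ and $\beta=\sigma_1^{-1}\tau_1\tau_2^{-1}\sigma_2$, the permutation $(\alpha,\beta)\gamma$ sends $i\in[n/2]$ to $n/2+\beta(i)$ and $n/2+j$ to $\alpha(j)$. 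In particular it has no fixed points and all its cycles alternate between the two halves, so if it is a single cycle it must have length exactly $n$; squaring, $((\alpha,\beta)\gamma)^2$ acts as $\alpha\beta$ on $[n/2]$ and as $\beta\alpha$ on the other half, and must consist of two cycles of length $n/2$, forcing $\alpha\beta$ to be a single $(n/2)$-cycle in $S_{n/2}$.

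The key step is a parity contradiction on $\alpha\beta$: a direct computation gives
\begin{equation*}
\sgn(\alpha\beta)=\sgn(\sigma_1\sigma_2^{-1}\sigma_1^{-1}\tau_1\tau_2^{-1}\sigma_2)=\sgn(\tau_1)\sgn(\tau_2)=1,
\end{equation*}
since the $\sigma_i$ contributions cancel and the $\tau_i$ have positive sign by hypothesis. On the other hand, an $(n/2)$-cycle has sign $(-1)^{n/2-1}=-1$ because $n/2$ is even. This contradiction rules out the existence of a crossing edge, completing the argument. I expect this sign trick (which is the whole reason for the factor-of-$2$ improvement and for the hypotheses $4\mid n$ and $A\subseteq\ker\sgn$) to be the delicate point, while everything else is straightforward bookkeeping inside $S_{n/2}\times S_{n/2}\rtimes\langle\gamma\rangle$.
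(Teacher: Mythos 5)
Your proof is correct and takes essentially the same approach as the paper: establish independence of each of $A'$ and $\gamma A'$ via \cref{lemma:indep_set_basic} and the automorphism $\pi\mapsto\gamma\pi$, note disjointness of the two cosets, and then kill the potential crossing edges via a sign-parity contradiction exploiting that $(\alpha,\beta)\gamma$ has no fixed points, hence must be a full $n$-cycle if it is a single cycle. The only difference is cosmetic: the paper derives the contradiction directly from $\sgn(\pi)=-1$ versus $\pi$ being a product of two positive-sign permutations, whereas you take a small detour through $\pi^2$ to show $\alpha\beta$ is an $(n/2)$-cycle and then contradict $\sgn(\alpha\beta)=1$; the latter is fine but the squaring step is unnecessary, since $\sgn(\pi)=\sgn(\gamma)\sgn(\alpha\beta)=1$ already contradicts $\pi$ being an odd permutation.
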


\begin{proof}
  Since $n$ is divisible by $4$, we have $\sgn(\gamma) = 1$, so all permutations of $A'\cup\gamma A'$ have
  positive sign. By \cref{lemma:indep_set_basic}, we know that $A'$ is an independent set of $\cB_n$ and since
  $\pi\mapsto\gamma\pi$ is an automorphism of $\cB_n$, it follows that $\gamma A'$ is also an independent set
  of $\cB_n$.

  This means that if $A'\cup\gamma A'$ is not independent in $\cB_n$, it must contain an edge between some
  $\gamma\cdot(\sigma_1,\sigma_1^{-1}\tau_1)\in \gamma A'$ and some $(\sigma_2,\sigma_2^{-1}\tau_2)\in A'$,
  that is, the permutation
  \begin{align*}
    \pi
    & \coloneqq
    \gamma\cdot (\sigma_1\sigma_2^{-1},\sigma_1^{-1}\tau_1\tau_2^{-1}\sigma_2)
  \end{align*}
  must be a single cycle. But note that from the definition of $\gamma$, the permutation $\pi$ cannot have any
  fixed points, so $\pi$ must be a full cycle, which in particular implies that $\sgn(\pi) = -1$ (as $n$ is
  even). But this contradicts the fact that $\gamma\cdot(\sigma_1,\sigma_1\tau_1)$ and
  $(\sigma_2,\sigma_2\tau_2)$ both have positive sign.

  Note that $\gamma A'$ is contained in the left coset $\gamma (S_{n/2}\times S_{n/2})$, so it must be
  disjoint from $A'\subseteq S_{n/2}\times S_{n/2}$.
\end{proof}

Equipped with these two lemmas, we can now prove \cref{theo:indep_set_construct} (restated below). When $n$ is
a power of $2$, the factor of $2$ advantage of \cref{lemma:indep_set_improved} will compound to a total
advantage of $n/2$ in the final construction.

\TheoMainIndepConst*

\begin{proof}
  The first part of the theorem follows by a simple induction in $n$ using \cref{lemma:indep_set_basic} (the
  base case of $n=1$ consists of an independent set of size $1$ in $\cB_1$). The second part follows by
  induction in $\log_2(n)$ using \cref{lemma:indep_set_improved} instead and base cases of $n=1$ and $n=2$, in
  which the independent sets have size $1$.
\end{proof}

\subsection{Coloring}

Just as in the case of the independent set, we start by presenting in \cref{lemma:chi_basic} the recursive
step of a construction that works in any size $n$ and later improve this construction in
\cref{lemma:chi_improved} by a factor of $2$ when $n$ is divisible by $4$.

\begin{lemma}\label{lemma:chi_basic}
  Let $n \ge 2$ be an integer and suppose $f \colon S_{\lceil n/2 \rceil} \to \cX$ is a proper coloring of
  $\cB_{\lceil n/2 \rceil}$. Then there exists an explicit proper coloring of $\cB_n$ with
  \begin{align*}
    \binom{n}{\lceil n/2 \rceil} \cdot \lvert \cX \rvert
  \end{align*}
  colors.
\end{lemma}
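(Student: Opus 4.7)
The plan is to generalize the independent set construction of \cref{lemma:indep_set_basic} by exploiting the subgroup $H\coloneqq S_{\lfloor n/2\rfloor}\times S_{\lceil n/2\rceil}$ of $S_n$, whose index is exactly $\binom{n}{\lceil n/2\rceil}$. I would partition $S_n$ into the left cosets of $H$; by \cref{remark:birkhoff_auto}, left multiplication is an automorphism of $\cB_n$, so the Birkhoff structure on any coset $\rho H$ is isomorphic to the one induced on $H$ itself. Elements in \emph{different} cosets will be distinguished trivially by the first coordinate of the color, so the main task is to properly color the subgraph of $\cB_n$ induced on $H$ using only $\lvert\cX\rvert$ colors.

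For that, I would define $g\colon H\to\cX$ by $g(\sigma,\pi)\coloneqq f(\sigma'\pi)$, where $\sigma'$ is the natural extension of $\sigma\in S_{\lfloor n/2\rfloor}$ to $S_{\lceil n/2\rceil}$ (fixing $\lceil n/2\rceil$ when $n$ is odd). The verification that $g$ is proper follows from the same cycle dichotomy used in the independent set case: if $(\sigma_1,\pi_1)(\sigma_2,\pi_2)^{-1}$ is a single cycle in $S_n$, then either $\sigma_1=\sigma_2$ with $\pi_1\pi_2^{-1}$ a single cycle in $S_{\lceil n/2\rceil}$, or $\pi_1=\pi_2$ with $\sigma_1\sigma_2^{-1}$ a single cycle in $S_{\lfloor n/2\rfloor}$. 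In the first case, $(\sigma_1'\pi_1)(\sigma_2'\pi_2)^{-1}=\sigma_1'\,\pi_1\pi_2^{-1}\,(\sigma_1')^{-1}$ is a conjugate of a single cycle, hence itself a single cycle in $S_{\lceil n/2\rceil}$; in the second, $(\sigma_1'\pi_1)(\sigma_2'\pi_2)^{-1}=\sigma_1'(\sigma_2')^{-1}$ is the natural extension of a single cycle and is therefore a single cycle in $S_{\lceil n/2\rceil}$. In both cases, properness of $f$ forces $g(\sigma_1,\pi_1)\ne g(\sigma_2,\pi_2)$.

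Finally, I would assemble the full coloring by choosing coset representatives $\rho_1,\ldots,\rho_N$ with $N\coloneqq\binom{n}{\lceil n/2\rceil}$ and setting $c(\rho_i h)\coloneqq(i,g(h))$ for every $h\in H$. Two elements in distinct cosets receive different first coordinates. Two elements $\rho_i h_1,\rho_i h_2$ of the same coset are adjacent in $\cB_n$ iff $h_1 h_2^{-1}$ is a single cycle in $S_n$, in which case $g(h_1)\ne g(h_2)$ by the previous paragraph. This yields a proper coloring of $\cB_n$ with exactly $N\cdot\lvert\cX\rvert=\binom{n}{\lceil n/2\rceil}\cdot\lvert\cX\rvert$ colors. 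I do not anticipate a serious obstacle; the only subtle point is the second case of the cycle dichotomy, where one must check that extending a cycle from $S_{\lfloor n/2\rfloor}$ to $S_{\lceil n/2\rceil}$ preserves the property of being a single cycle, which holds because additional fixed points do not alter the nontrivial cycle structure.
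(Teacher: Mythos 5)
Your proposal is correct and follows essentially the same route as the paper's proof: decompose $S_n$ into left cosets of $S_{\lfloor n/2\rfloor}\times S_{\lceil n/2\rceil}$, distinguish cosets by the first color coordinate, and within a coset pull back to $S_{\lceil n/2\rceil}$ via the natural injections and apply $f$, using the single-cycle dichotomy (the cycle lives entirely in one factor) plus conjugation invariance to verify properness. The only cosmetic difference is that you invoke \cref{remark:birkhoff_auto} to reduce the intra-coset analysis to $H$ itself, whereas the paper conjugates by the coset representative $t_i$ inline, but the two computations are identical.
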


\begin{proof}
  Set $A \coloneqq [\lceil n/2 \rceil]$ and $B \coloneqq [n]\setminus A$. Let $H \coloneqq S_A \times S_B
  \subseteq S_n$ and let $\cT \coloneqq \{t_1,\ldots, t_k\}$ be a set of representatives of (left) cosets of
  $H$ in $S_n$. Note that $k = \binom{n}{\lceil n/2 \rceil}$. Let $\iota_A,\iota_B$ be natural injections of
  $S_{A},S_{B}$ in $S_{[\lceil n/2 \rceil]}$ (preserving the cycle type). For convenience, we use
  $\widehat{h}_A \coloneqq \iota_A(h_A)$ for $h_A \in S_A$ and similarly for $S_B$. Define the coloring $f'
  \colon S_n \to \cT \times \cX$ as
  \begin{align*}
    f'(t_i h_A h_B) & \coloneqq (t_i, f(\widehat{h}_A \widehat{h}_B)),
  \end{align*}
  for every $i \in [k]$, every $h_A \in S_A$ and every $h_B \in S_B$.

  Now we prove that $f'$ is a proper coloring of $\cB_n$. By construction, permutations of different cosets of
  $H$ receive different colors. Let $\sigma$ and $\tau$ be permutations in the same coset $t_i H$ such that
  $\pi \coloneqq \sigma \tau^{-1}$ is a non-trivial cycle, i.e., $\sigma$ and $\tau$ are adjacent in
  $\cB_n$. Write $\sigma = t_i \cdot g_A g_B$ and $\tau = t_i \cdot h_A h_B$ for some $g_A,h_A \in S_A$ and
  $g_B,h_B \in S_B$ so that
  \begin{align*}
    \pi
    & =
    t_i g_A g_B  \cdot h_B ^{-1} h_A^{-1} t_i^{-1}
    =
    t_i (g_A h_A^{-1}) (g_B h_B^{-1}) t_i^{-1},
  \end{align*}
  where the second equality follows because elements of $S_A$ commute with elements of $S_B$. Since $t_i^{-1}
  \pi t_i$ is also a single cycle, exactly one of $(g_A h_A^{-1})$ or $(g_B h_B^{-1})$ must be a single cycle
  and the other the identity. We show that $f'(\sigma) \ne f'(\tau)$ by showing that $f(\widehat{g}_A
  \widehat{g}_B) \ne f(\widehat{h}_A \widehat{h}_B)$. Suppose first that $(g_A h_A^{-1})$ is a cycle and $g_B=
  h_B$. Then
  \begin{align*}
    \widehat{g}_A \widehat{g}_B \cdot (\widehat{h}_A \widehat{h}_B)^{-1}
    & =
    \widehat{g}_A (\widehat{h}_A)^{-1}
  \end{align*}
  is a cycle and thus $f(\widehat{g}_A\widehat{g}_B) \ne f(\widehat{h}_A\widehat{h}_B)$. In the second case we
  have $g_A=h_A$ and $(g_B h_B^{-1})$ is a cycle, so
  \begin{align*}
    \widehat{g}_A \widehat{g}_B \cdot (\widehat{h}_A \widehat{h}_B)^{-1}
    & =
    \widehat{g}_A  (\widehat{g}_B \widehat{h}_B^{-1}) \widehat{g}_A^{-1}
  \end{align*}
  is also a cycle and again $f(\widehat{g}_A\widehat{g}_B) \ne f(\widehat{h}_A\widehat{h}_B)$. Therefore, $f'$
  is a proper coloring of $\cB_n$ with
  \begin{align*}
    \binom{n}{\lceil n/2 \rceil} \cdot \lvert \cX \rvert
  \end{align*}
  colors.
\end{proof}

The idea to improve the coloring construction above by a factor of $2$ is a small generalization of the idea
of \cref{lemma:indep_set_improved} for the independent set.

\begin{lemma}\label{lemma:chi_improved}
  Let $n \ge 4$ be an integer divisible by $4$ and suppose $f \colon S_{n/2} \to \cX$ of $\cB_{n/2}$ is a
  proper coloring that respects signs in the sense that permutations in the same color class have the same
  sign. Then there exists an explicit proper coloring $f'$ of $\cB_n$ that respects signs and with
  \begin{align*}
    \frac{1}{2}\cdot\binom{n}{n/2} \cdot \lvert \cX \rvert
  \end{align*}
  colors.
\end{lemma}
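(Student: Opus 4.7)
The plan is to refine the construction of \cref{lemma:chi_basic} by pairing up left cosets of $H \coloneqq S_A \times S_B$ via the involution $\gamma$ from \cref{lemma:indep_set_improved}. Because conjugation by $\gamma$ swaps $S_A$ and $S_B$, one has $\gamma H = H\gamma$, so right multiplication by $\gamma$ induces a well-defined involution on the set of left cosets, $tH \mapsto tH\gamma = t\gamma H$; and since $\gamma \notin H$, every orbit has size exactly $2$. Choose a set $\cT_0$ of size $\tfrac{1}{2}\binom{n}{n/2}$ picking one coset from each orbit, so that $\cT_0 \cup \cT_0\gamma$ is a set of representatives for all cosets of $H$. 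Every $\pi \in S_n$ then admits a unique factorization $\pi = t_i h_A h_B$ or $\pi = t_i \gamma h_A h_B$ with $t_i \in \cT_0$, $h_A \in S_A$, $h_B \in S_B$, and I would define
\[
  f'(\pi) \coloneqq \bigl(t_i,\; f(\widehat{h}_A \widehat{h}_B)\bigr),
\]
using the lifts $\widehat{\cdot}$ of \cref{lemma:chi_basic}. The color count is then $\lvert \cT_0 \rvert \cdot \lvert \cX \rvert = \tfrac{1}{2}\binom{n}{n/2}\lvert \cX \rvert$, and since $f$ respects signs and $\sgn(\gamma) = +1$ (as $4 \mid n$, so $\gamma$ is a product of an even number of transpositions), the coloring $f'$ respects signs as well.

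It remains to verify properness. Suppose $\pi_1,\pi_2 \in S_n$ are adjacent in $\cB_n$ and share the same color, so in particular they share the first coordinate $t_i \in \cT_0$. If both lie in $t_iH$ or both lie in $t_i\gamma H$, then after conjugating by $t_i$ or $t_i\gamma$ and using $\gamma H \gamma^{-1} = H$, single-cycleness of $\pi_1\pi_2^{-1}$ reduces to single-cycleness of an element $(h_Ag_A^{-1})(h_Bg_B^{-1}) \in S_A \times S_B$. Exactly as in the proof of \cref{lemma:chi_basic}, this forces $\widehat{h}_A\widehat{h}_B$ and $\widehat{g}_A\widehat{g}_B$ to be adjacent in $\cB_{n/2}$, contradicting $f(\widehat{h}_A\widehat{h}_B) = f(\widehat{g}_A\widehat{g}_B)$.

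The main obstacle is the new cross-coset case $\pi_1 = t_i h_Ah_B \in t_iH$ and $\pi_2 = t_i\gamma g_Ag_B \in t_i\gamma H$, where one computes
\[
  \pi_1\pi_2^{-1}
  =
  t_i (h_Ag_A^{-1})(h_Bg_B^{-1})\gamma^{-1} t_i^{-1},
\]
so by conjugation we must analyze $\sigma \coloneqq (h_Ag_A^{-1})(h_Bg_B^{-1})\gamma^{-1}$. The key observation is that $\sigma$ is fixed-point-free: for $k \in A$, $\gamma^{-1}(k) \in B$, then $(h_B g_B^{-1})$ keeps it in $B$ and $(h_A g_A^{-1})$ acts trivially on $B$, so $\sigma(k) \in B$; symmetrically for $k \in B$. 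Therefore, if $\sigma$ were a single cycle it would have to be a full $n$-cycle, of sign $(-1)^{n-1} = -1$. But the sign-respecting property of $f'$ forces $\sgn(\pi_1) = \sgn(\pi_2)$, whence $\sgn(\pi_1\pi_2^{-1}) = +1$, a contradiction. This concludes the argument.
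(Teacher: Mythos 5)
Your proof is correct and follows essentially the same approach as the paper: pairing cosets of $H = S_A\times S_B$ via $\gamma$, coloring paired cosets identically, and dispatching the cross-coset adjacency case by combining the fixed-point-free observation with a sign argument. You make the coset pairing slightly cleaner by noting explicitly that $\gamma$ normalizes $H$ (so $tH\mapsto t\gamma H$ is a well-defined fixed-point-free involution), a point the paper glosses over when asserting the existence of the representatives $u_1,\ldots,u_{k/2}$.
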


\begin{proof}
  We proceed as in the proof of \cref{lemma:chi_basic}, but instead of assigning a color to each (left) coset
  of $H\coloneqq S_A\times S_B$ (where $A\coloneqq [n/2]$ and $B\coloneqq [n]\setminus[n/2]$) we will be able
  to assign the same color to every two cosets, thereby using only half as many colors. Again, we let
  $\iota_A,\iota_B$ be natural injections of $S_{A},S_{B}$ in $S_{[\lceil n/2 \rceil]}$ and we use the
  notation $\widehat{h}_A \coloneqq \iota_A(h_A)$ for $h_A \in S_A$ and similarly for $S_B$.

  Let $\gamma$ be the product of transpositions $\gamma\coloneqq \prod_{i=1}^{n/2} (i, n/2+i)$ and note that
  since $n$ is divisible by $4$, we have $\sgn(\gamma)=1$.

  Note that since $\gamma\notin H$, it follows that for every $u\in S_n$, we have $uH\neq u\gamma H$. In
  particular, for $k \coloneqq \binom{n}{n/2}$, we can find $u_1,\ldots,u_{k/2}\in S_n$ so that the cosets of
  $H$ are precisely
  \begin{align*}
    u_1 H, u_2 H,\ldots, u_{k/2} H,
    u_1 \gamma H, u_2 \gamma H, \ldots, u_{k/2} \gamma H.
  \end{align*}
  Let $\cU\coloneqq \{u_1,\ldots,u_{k/2}\}$ and define the coloring $f'\colon S_n\to\cU\times\cX$ as
  \begin{align*}
    f'(u_i\cdot h_A h_B) & \coloneqq (u_i, f(\widehat{h}_A\widehat{h}_B));\\
    f'(u_i\gamma\cdot h_A h_B) & \coloneqq (u_i, f(\widehat{h}_A\widehat{h}_B));
  \end{align*}
  for every $i\in[k/2]$, every $h_A\in S_A$ and every $h_B\in S_B$.

  Let us prove that $f'$ is a proper coloring. We classify the edges of $\cB_n$ into the following six types.
  \begin{enumerate}
  \item $\{u_i\cdot g_A g_B, u_j\cdot h_A h_B\}$ for some $i,j\in[k/2]$ with $i\neq j$, some $g_A,h_A\in S_A$
    and some $g_B,h_B\in S_B$.
    \label{it:ui-uj}
  \item $\{u_i \gamma\cdot g_A g_B, u_j \gamma\cdot h_A h_B\}$ for some $i,j\in[k/2]$ with $i\neq j$, some
    $g_A,h_A\in S_A$ and some $g_B,h_B\in S_B$.
    \label{it:uigamma-ujgamma}
  \item $\{u_i \gamma\cdot g_A g_B, u_j\cdot h_A h_B\}$ for some $i,j\in[k/2]$ with $i\neq j$, some
    $g_A,h_A\in S_A$ and some $g_B,h_B\in S_B$.
    \label{it:uigamma-uj}
  \item $\{u_i\cdot g_A g_B, u_i\cdot h_A h_B\}$ for some $i\in[k/2]$, some $g_A,h_A\in S_A$ and some
    $g_B,h_B\in S_B$.
    \label{it:ui-ui}
  \item $\{u_i \gamma\cdot g_A g_B, u_i \gamma\cdot h_A h_B\}$ for some $i\in[k/2]$, some $g_A,h_A\in S_A$ and
    some $g_B,h_B\in S_B$.
    \label{it:uigamma-uigamma}
  \item $\{u_i \gamma\cdot g_A g_B, u_i\cdot h_A h_B\}$ for some $i\in[k/2]$, some $g_A,h_A\in S_A$ and some
    $g_B,h_B\in S_B$.
    \label{it:uigamma-ui}
  \end{enumerate}

  Edges of the types \ref{it:ui-uj}, \ref{it:uigamma-ujgamma} and \ref{it:uigamma-uj} are not monochromatic by
  observing the first coordinate of $f'$. Edges of the types \ref{it:ui-ui} and \ref{it:uigamma-uigamma} are
  not monochromatic by an argument completely analogous to that of \cref{lemma:chi_basic}.

  Let us then consider an edge of type \ref{it:uigamma-ui}. Since
  \begin{align*}
    u_i \gamma\cdot g_A g_B\cdot h_B^{-1} h_A^{-1}\cdot u_i^{-1}
  \end{align*}
  is a cycle, it follows that
  \begin{align*}
    \pi\coloneqq \gamma\cdot g_A g_B\cdot h_B^{-1} h_A^{-1}
  \end{align*}
  is a cycle. But the definition of $\gamma$ implies that $\pi$ does not have any fixed points, so it must be
  a full cycle, hence $\sgn(\pi)=-1$. Therefore, exactly one of $g_A g_B$ and $h_A h_B$ must have negative
  sign, hence $f(\widehat{g}_A\widehat{g}_B)\neq f(\widehat{h}_A\widehat{h}_B)$ as $f$ respects signs, which
  implies that $f'(u_i \gamma\cdot g_A g_B)\neq f'(u_i\cdot h_A h_B)$.

  It remains to show that $f'$ respects signs. It is enough to show that for every $i\in[k/2]$, every
  $g_A,h_A\in S_A$ and every $g_B,h_B\in S_B$ such that $f(\widehat{g}_A\widehat{g}_B) =
  f(\widehat{h}_A\widehat{h}_B)$, the following permutations have the same sign
  \begin{align*}
    u_i\cdot g_A g_B, & &
    u_i \gamma\cdot g_A g_B, & &
    u_i\cdot h_A h_B, & &
    u_i \gamma\cdot h_A h_B.
  \end{align*}
  Since $\sgn(\gamma)=1$, the first two permutations have the same sign. The same argument shows the last two
  have the same sign. Hence, it is enough to show that $\sgn(u_i\cdot g_A g_B) = \sgn(u_i\cdot h_A h_B)$,
  which is equivalent to $\sgn(g_A g_B) = \sgn(h_A h_B)$ and this follows from the fact that
  $f(\widehat{g}_A\widehat{g}_B) = f(\widehat{h}_A\widehat{h}_B)$ and that $f$ respects signs.
\end{proof}

Similarly to the independence number, when $n$ is a power of $2$, the factor of $2$ advantage of
\cref{lemma:chi_improved} compounds to a total advantage of $n/2$ in the final construction of a proper
coloring.

\TheoMainChromNumb*

\begin{proof}
  The first part of the theorem follows by a induction in $n$ using \cref{lemma:chi_basic} (the base case of
  $n=1$ consists of a trivial coloring). The second part follows by induction in $\log_2(n)$ using
  \cref{lemma:chi_improved} instead and base cases of $n=1$ and $n=2$, in which the coloring is rainbow.
\end{proof}

\subsection*{Acknowledgements}

We thank Madhur Tulsiani for bringing the Kane--Lovett--Rao result~\cite{lovett17} to our attention.

\bibliographystyle{alpha}
\bibliography{references}

\appendix

\section{Tail Bounds Asymptotics}\label{app:tail_bounds}

The tail values $T_n^{\ell,k_0}(c)$ from the linear program II, in \cref{def:lp_2}, can be bounded by
$T^{\ell,k_0}(c)$ from the linear program III in \cref{def:lp_3} as long as $n$ is sufficiently large. More
precisely, our goal in this section is to prove the following.

\begin{restatable}{claim}{TailBoundingTail}\label{claim:tail_bounding_tail}
  Let $k_0$ be an odd positive integer and $\ell$ be a non-negative even integer. Then
  \begin{align*}
    \lim_{\substack{n \to \infty\\\text{odd}}} T_n^{\ell,k_0}(c) & \le T^{\ell,k_0}(c).
  \end{align*}
\end{restatable}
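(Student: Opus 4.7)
My plan is to split $T_n^{\ell,k_0}(c)$ into the two sums that appear in its definition and handle them separately. For the second sum, which ranges over $k \in [\frac{n-3\ell-1}{2}+1, \frac{n-\ell-1}{2}]$ odd, the argument $k+\ell$ lies in $[\frac{n-\ell+1}{2}, \frac{n+\ell-1}{2}]$, i.e., within distance $\ell$ of the center of a row of $\binom{n-\ell-1}{\cdot}$. For fixed $\ell$ and $n \to \infty$, each such binomial is $\Theta(\binom{n-\ell-1}{(n-\ell-1)/2}) = \Theta(2^{n-\ell-1}/\sqrt{n})$, so each term is $O((c/2)^{n-1}\sqrt{n})$. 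Since $c < 2$ and the number of terms is at most $\ell/2 + 1$, the entire second sum tends to $0$.

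For the first sum, since the summand is $O((c/2)^{2k+2\ell})$ and $c/2 < 1$, the series converges absolutely and the partial sums approach
\[
  S(\ell,k_0,c) \coloneqq \sum_{\substack{k \ge k_0+2\\ k \text{ odd}}} \frac{c^{2k+2\ell}}{\binom{2k+2\ell}{k+\ell}}
\]
as $n \to \infty$. I will bound each denominator using the elementary inequality $\binom{2m}{m} \ge 4^m/(2m+1)$ (from $\sum_j \binom{2m}{j} = 4^m$ with $\binom{2m}{m}$ being the maximum of $2m+1$ terms). Writing $a \coloneqq c/2$, $p \coloneqq k_0+\ell$, and parametrizing odd $k = k_0+2+2j$ for $j \ge 0$ (so that $k+\ell = p+2+2j$), I get
\[
  S(\ell,k_0,c) \le \sum_{j \ge 0} \bigl(2(p+2+2j)+1\bigr)\, a^{2(p+2+2j)}
  = a^{2p+4}\!\left[\frac{2p+5}{1-a^4} + \frac{4a^4}{(1-a^4)^2}\right]
  = a^{2p+4}\cdot\frac{(2p+5) - (2p+1)a^4}{(1-a^4)^2},
\]
where I used the standard identities $\sum_{j \ge 0} a^{4j} = 1/(1-a^4)$ and $\sum_{j \ge 0} j\, a^{4j} = a^4/(1-a^4)^2$.

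It remains to verify that this closed form is dominated by $T^{\ell,k_0}(c) = 1.5\, a^{2p+4}\,[(2p+4) - 2p\, a^4]/(1-a^4)^2$. Dividing by the common positive factor, this reduces to the inequality
\[
  (2p+5) - (2p+1)a^4 \le 1.5\bigl[(2p+4) - 2p\, a^4\bigr],
\]
i.e., $(p+1) \ge (p-1)\,a^4$, which holds trivially for $p \in \{0,1\}$ and for $p \ge 2$ follows from $a^4 < 1 < (p+1)/(p-1)$. Combining this with the vanishing of the second sum gives $\limsup_{n} T_n^{\ell,k_0}(c) \le T^{\ell,k_0}(c)$ as claimed. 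The only real obstacle is arranging the indices carefully so the parity constraints match (increments of $2$ in $k$ correspond to increments of $4$ in the exponent of $a$), and the slight cushion built into the constant $1.5$ in $T^{\ell,k_0}$ is precisely what absorbs the gap between $\binom{2m}{m}$ and its lower bound $4^m/(2m+1)$.
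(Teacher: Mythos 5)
Your proof is correct, and the overall architecture matches the paper's: split $T_n^{\ell,k_0}(c)$ into the infinite-tail part and the boundary part, show the boundary part vanishes, and bound the infinite tail by $T^{\ell,k_0}(c)$. But the key estimate on the tail series takes a genuinely different route. The paper bounds $1/\binom{2m}{m}$ using Robbins' form of Stirling's approximation (its \cref{cor:stirling}), which after the crude step $\sqrt{m}\le m$ produces a prefactor $\tfrac{\sqrt{\pi}}{2}\,e^{2/(15(k_0+\ell+2))}\approx 0.93 < 1.5$; you instead use the elementary inequality $\binom{2m}{m}\ge 4^m/(2m+1)$, which already carries the linear growth, and then verify directly that the resulting closed form
\begin{align*}
a^{2p+4}\cdot\frac{(2p+5)-(2p+1)a^4}{(1-a^4)^2}
\end{align*}
is dominated by $T^{\ell,k_0}(c)=1.5\,a^{2p+4}\,\frac{(2p+4)-2p\,a^4}{(1-a^4)^2}$ via the reduction to $(p-1)a^4\le p+1$, which holds since $p=k_0+\ell\ge 1$ and $a<1$. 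This avoids Stirling entirely (and hence the paper's \cref{thm:robbins} and \cref{cor:stirling}), at the cost of a slightly weaker per-term bound ($2m+1$ versus $\sqrt{\pi m}\,e^{2/(15m)}$), absorbed by the same $1.5$ slack. Both approaches rely on the same geometric-series-with-weights closed form (the paper's \cref{claim:series_closed_form}, which you recomputed). For the boundary sum, the paper uses the explicit ratio bound $\binom{2s}{s}/\binom{2s-d}{s}\le 3^d$ (\cref{claim:approx_binom_bound}), whereas you argue asymptotically that binomials at fixed distance $\le\ell$ from the center of row $n-\ell-1$ are $\Theta$ of the central binomial; your argument is correct for fixed $\ell$ and a bit less quantitative, but it does give a rigorous $T'_n\to 0$. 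In short: your variant trades Stirling for an elementary binomial bound and a small algebraic verification, which is a clean simplification that still fits inside the constant $1.5$ the authors built in.
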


First, we recall reasonably sharp bounds on the Stirling's approximation in \cref{subsec:stirling_approx},
then we derive a simple result about a series closely related to the geometric series in
\cref{subsec:series_related_to_geom} and we finally relate $T_n^{\ell,k_0}(c)$ and $T^{\ell,k_0}(c)$ in
\cref{subsec:comparing_tail_bounds}.

\subsection{Stirling Approximation}\label{subsec:stirling_approx}

We rely on Robbins' version of Stirling's approximation.

\begin{theorem}[Robbins' version of Stirling's approximation~\cite{Robbins55}]
  \label{thm:robbins}
  For $n\in\NN_+$, we have
  \begin{align*}
    n! & = \sqrt{2\pi n} \cdot \left(\frac{n}{e}\right)^n \cdot e^{F(n)},
  \end{align*}
  where
  \begin{align*}
    \frac{1}{12n+1} & < F(n) < \frac{1}{12n}.
  \end{align*}
\end{theorem}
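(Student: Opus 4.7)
The plan is to follow Robbins' 1955 argument. I introduce the auxiliary sequence
\begin{align*}
d_n & \coloneqq \ln(n!) - \left(n+\tfrac{1}{2}\right)\ln(n) + n,
\end{align*}
for which $e^{d_n} = n!/(\sqrt{n}\,(n/e)^n)$. The strategy is to show that $d_n$ converges to $L \coloneqq \ln\sqrt{2\pi}$ and that the error $F(n) \coloneqq d_n - L$ obeys the stated bounds, by sandwiching $d_n$ between two explicit telescoping auxiliary sequences.

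The first step is to establish the closed form
\begin{align*}
d_n - d_{n+1} = \sum_{k=1}^{\infty} \frac{1}{(2k+1)(2n+1)^{2k}},
\end{align*}
which follows by writing $\ln\frac{n+1}{n} = 2\,\mathrm{arctanh}\frac{1}{2n+1}$ and expanding $\mathrm{arctanh}$ as a power series. The second step is to sandwich this series. The upper bound
\begin{align*}
d_n - d_{n+1} < \frac{1}{12n} - \frac{1}{12(n+1)}
\end{align*}
is immediate from majorizing $1/(2k+1)$ by $1/3$ and summing the resulting geometric series to $\frac{1}{3((2n+1)^2 - 1)} = \frac{1}{12n(n+1)}$. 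The matching lower bound
\begin{align*}
d_n - d_{n+1} > \frac{1}{12n+1} - \frac{1}{12(n+1)+1}
\end{align*}
is the delicate step; I would establish it by an explicit algebraic comparison, noting that the right-hand side equals $12/((12n+1)(12n+13))$ and showing that the truncated sum $\frac{1}{3(2n+1)^2} + \frac{1}{5(2n+1)^4}$ already exceeds it via a direct polynomial inequality in $n$.

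Once both estimates are in hand, the sequences $a_n \coloneqq d_n - 1/(12n)$ and $b_n \coloneqq d_n - 1/(12n+1)$ are strictly increasing and strictly decreasing, respectively, and satisfy $b_n - a_n \to 0$. Hence they share a common limit, necessarily equal to $L \coloneqq \lim_n d_n$, and the strict monotonicity yields $a_n < L < b_n$ for every $n$, i.e., $1/(12n+1) < d_n - L < 1/(12n)$. To identify the constant $L$, I invoke Wallis' product in the form
\begin{align*}
\lim_{n\to\infty}\frac{2^{4n}(n!)^4}{((2n)!)^2(2n+1)} = \frac{\pi}{2};
\end{align*}
substituting $n! = \sqrt{n}\,(n/e)^n e^{d_n}$ and passing to the limit, all powers of $n$ and of $e$ cancel and the identity reduces to $e^{2L}/4 = \pi/2$, giving $L = \ln\sqrt{2\pi}$.

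The main obstacle is the lower bound on $d_n - d_{n+1}$: the geometric majorization that delivers the upper bound is much too crude for the lower direction, and extracting the sharp denominator $12n+1$ (rather than some larger constant) requires a careful numerical comparison. This is precisely the refinement that distinguishes Robbins' version from classical Stirling, and it is the only step of the plan that is not essentially mechanical.
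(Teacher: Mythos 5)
The paper does not prove this statement at all---it is quoted verbatim from Robbins' 1955 note and used as a black box---so there is no internal proof to compare against; your sketch is a correct reconstruction of Robbins' original argument (the telescoping identity $d_n-d_{n+1}=\sum_{k\ge 1}\tfrac{1}{(2k+1)(2n+1)^{2k}}$, the two geometric-type bounds, the squeeze between the monotone sequences $d_n-\tfrac{1}{12n}$ and $d_n-\tfrac{1}{12n+1}$, and Wallis' product to pin down the constant), and every step checks out. One small simplification for the step you flag as delicate: the first term alone already suffices, since $\tfrac{1}{3(2n+1)^2}=\tfrac{12}{144n^2+144n+36}>\tfrac{12}{144n^2+168n+13}=\tfrac{1}{12n+1}-\tfrac{1}{12(n+1)+1}$ for all $n\ge 1$, so no second-order term or careful numerics are needed.
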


The approximation above gives us the following approximation of the \emph{``middle binomial''}.

\begin{corollary}\label{cor:stirling}
  For $n\in\NN_+$, we have
  \begin{align*}
    \binom{2n}{n} & = \frac{2^{2n}}{\sqrt{n\pi}} e^{F(2n)-2F(n)},
  \end{align*}
  where $F(n)$ is as in \cref{thm:robbins}. In particular, we have
  \begin{align*}
    \binom{2n}{n} \ge \frac{2^{2n}}{\sqrt{n\pi}} e^{-\frac{2}{15n}},
  \end{align*}
\end{corollary}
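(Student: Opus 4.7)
The plan is to invoke Robbins' refined Stirling approximation (Theorem~\ref{thm:robbins}) and substitute into the standard identity $\binom{2n}{n} = (2n)!/(n!)^2$. Applying the approximation to each factorial, I would write $(2n)! = \sqrt{4\pi n}\,(2n/e)^{2n}\,e^{F(2n)}$ and $(n!)^2 = 2\pi n\,(n/e)^{2n}\,e^{2F(n)}$, then divide. The $(n/e)^{2n}$ factors cancel in the ratio, the expansion $(2n)^{2n} = 2^{2n} n^{2n}$ produces the $2^{2n}$ in the numerator, the prefactor $\sqrt{4\pi n}/(2\pi n)$ collapses to $1/\sqrt{\pi n}$, and the exponential terms combine to $e^{F(2n) - 2F(n)}$. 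This yields the claimed exact identity in a single routine algebraic simplification.

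For the explicit lower bound, the task reduces to showing $F(2n) - 2F(n) \ge -\frac{2}{15n}$, equivalently $2F(n) - F(2n) \le \frac{2}{15n}$. I would use the \emph{upper} bound $F(n) < 1/(12n)$ together with the \emph{lower} bound $F(2n) > 1/(24n+1)$ from Theorem~\ref{thm:robbins}, obtaining
\begin{align*}
2F(n) - F(2n) < \frac{1}{6n} - \frac{1}{24n+1} = \frac{18n+1}{6n(24n+1)}.
\end{align*}
Comparing this with $2/(15n)$ and clearing denominators reduces the required inequality to $15(18n+1) \le 12(24n+1)$, i.e., $3 \le 18n$, which holds for every $n \in \NN_+$.

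There is no substantive obstacle here; the only care needed is to use the correct direction of Robbins' two-sided estimate at each occurrence of $F$. Specifically, bounding $2F(n) - F(2n)$ from above requires the \emph{upper} bound on $F(n)$ (so that $2F(n)$ is small) and the \emph{lower} bound on $F(2n)$ (so that $-F(2n)$ is small in absolute value). Both directions of Theorem~\ref{thm:robbins} are therefore genuinely used. The constant $2/15$ appearing in the statement is essentially the cleanest rational constant compatible with the argument uniformly in $n \ge 1$.
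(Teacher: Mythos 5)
Your proposal is correct and follows exactly the route the paper intends: the paper states \cref{cor:stirling} as an immediate consequence of Robbins' approximation without writing out the computation, and your derivation (substituting Robbins into $\binom{2n}{n}=(2n)!/(n!)^2$ for the identity, then bounding $2F(n)-F(2n)$ using the upper bound on $F(n)$ and the lower bound on $F(2n)$ to reduce to $3\le 18n$) is precisely that implicit argument, carried out correctly.
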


\subsection{Series Related to the Geometric Series}\label{subsec:series_related_to_geom}

We will need a closed form expression for a series related to the geometric series.

\begin{claim}\label{claim:series_closed_form}
  For $\lvert q\rvert < 1$, $a\in\NN_+$
  and $n_0,b\in\NN$, we have
  \begin{align*}
    \sum_{n\geq n_0} (an+b) q^{an + b}
    & =
    \frac{q^b}{(1 - q^a)^2} ((a n_0 + b) q^{a n_0} + (a - a n_0 - b) q^{a n_0 + a}).
  \end{align*}
\end{claim}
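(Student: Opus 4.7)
The plan is to reduce the sum to standard closed forms by separating the arithmetic factor from the geometric factor. First I would pull out $q^b$ and split the sum as
\begin{align*}
\sum_{n \geq n_0} (an+b) q^{an+b}
& =
q^b\left[ a \sum_{n \geq n_0} n Q^n + b \sum_{n \geq n_0} Q^n \right],
\end{align*}
where $Q \coloneqq q^a$ satisfies $\lvert Q\rvert < 1$. The second inner sum is immediately $Q^{n_0}/(1-Q)$ by the geometric series formula.

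For the first inner sum, I would shift the index via $n = m + n_0$ and use the identities $\sum_{m\geq 0} Q^m = 1/(1-Q)$ and $\sum_{m \geq 0} m Q^m = Q/(1-Q)^2$ (the latter obtained by differentiating the geometric series), yielding
\begin{align*}
\sum_{n \geq n_0} n Q^n
& =
Q^{n_0}\left[ \frac{Q}{(1-Q)^2} + \frac{n_0}{1-Q}\right]
=
\frac{Q^{n_0}\bigl(n_0 + Q - n_0 Q\bigr)}{(1-Q)^2}.
\end{align*}
Combining the two pieces over the common denominator $(1-Q)^2$, the numerator becomes $a(n_0 + Q - n_0 Q) + b(1-Q) = (an_0 + b) + (a - an_0 - b)\, Q$. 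Substituting back $Q = q^a$ and restoring the $q^b$ factor gives precisely the claimed expression. There is no real obstacle here: the whole derivation is a routine manipulation of the geometric series, and convergence of every intermediate sum is guaranteed by $\lvert q^a\rvert < 1$.
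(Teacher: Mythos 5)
Your argument is correct, and the final simplification matches the claimed closed form. The paper's proof proceeds by a single differentiation of the tail geometric series $\sum_{n\geq n_0} t^{an+b} = t^{an_0+b}/(1-t^a)$ evaluated at $t=q$, whereas you split the coefficient $(an+b)$ and invoke the two standard identities $\sum_{m\geq 0} Q^m = 1/(1-Q)$ and $\sum_{m\geq 0} m Q^m = Q/(1-Q)^2$; both routes rest on the same geometric-series/differentiation fact, so this is essentially the same approach with slightly different packaging.
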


\begin{proof}
  Indeed
  \begin{align*}
    \sum_{n \ge n_0} (an+b) q^{an + b}
    & =
    q \sum_{n \ge n_0} (an+b) q^{an + b -1}
    \\
    & =
    q \left(\frac{d}{dt} \sum_{n \ge n_0} t^{an + b} \middle)\right\vert_{t=q}
    \\
    & =
    q \left(\frac{d}{dt} \frac{t^{an_0 + b}}{1-t^a} \middle)\right\vert_{t=q}
    \\
    & =
    q \frac{(an_0 +b) q^{an_0 + b -1}(1-q^a) + a q^{an_0 + b + a -1}}{(1-q^a)^2},
  \end{align*}
  which simplifies to the claimed bound.
\end{proof}

\subsection{Comparing Tail Bounds}\label{subsec:comparing_tail_bounds}

We proceed to relate $T_n^{\ell,k_0}(c)$ and $T^{\ell,k_0}(c)$, but first we will need two technical claims.

\begin{claim}\label{claim:series_to_limit_tail}
  Let $k_0$ be a positive odd integer. We have
  \begin{align*}
    \sum_{\substack{k=k_0 + 2 \\ k \text{odd}}}^{\infty} \frac{c^{2k+2\ell}}{\binom{2k+2\ell}{k+\ell}}
    & \le
    T^{\ell,k_0}(c).
  \end{align*}
\end{claim}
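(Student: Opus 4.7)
The plan is to bound the summand via Stirling's approximation so that the resulting series has exactly the closed form computed by \cref{claim:series_closed_form}, and then match that closed form to $T^{\ell,k_0}(c)$. Writing $n \coloneqq k+\ell$, the bound from \cref{cor:stirling} gives $\binom{2n}{n} \ge 4^n / (\sqrt{n\pi}\, e^{2/(15n)})$, so that
\[
  \frac{c^{2(k+\ell)}}{\binom{2(k+\ell)}{k+\ell}}
  \;\le\;
  \sqrt{n\pi}\, e^{2/(15n)} \cdot \left(\frac{c}{2}\right)^{2n}.
\]
Since $k_0 \ge 1$ is odd and $\ell \ge 0$ is even, every index in the sum satisfies $n = k+\ell \ge k_0+\ell+2 \ge 3$. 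I would then verify the clean inequality $\sqrt{n\pi}\, e^{2/(15n)} \le 3n$ for $n \ge 3$: the ratio $\sqrt{\pi/n}\, e^{2/(15n)}$ is manifestly decreasing in $n$, and at $n=3$ it evaluates to $\sqrt{\pi/3}\, e^{2/45} \approx 1.07$, which is comfortably below $3$.

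Next, I would apply \cref{claim:series_closed_form} to the remaining geometric-type sum. Parameterize odd $k \ge k_0+2$ as $k = k_0 + 2 + 2j$ for $j \ge 0$ (valid because $k_0+2$ is odd), so that $k+\ell = (k_0+\ell+2) + 2j$. Setting $q \coloneqq (c/2)^2$, $a=2$, $b=k_0+\ell+2$, and $n_0=0$ in \cref{claim:series_closed_form} yields
\[
  \sum_{\substack{k \ge k_0+2\\k \text{ odd}}} (k+\ell)\left(\frac{c}{2}\right)^{2(k+\ell)}
  \;=\;
  \frac{(c/2)^{2(k_0+\ell)+4}}{\left(1-(c/2)^4\right)^2}
  \cdot \bigl[(k_0+\ell+2) - (k_0+\ell)(c/2)^4\bigr].
\]
Factoring $2(c/2)^{2(k_0+\ell)+4}$ out of the numerator of $T^{\ell,k_0}(c)$ in \cref{def:lp_3} shows that its bracketed factor equals exactly $2\bigl[(k_0+\ell+2) - (k_0+\ell)(c/2)^4\bigr]$, so that $T^{\ell,k_0}(c)$ is precisely $1.5 \cdot 2 = 3$ times the closed-form sum above. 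Chaining with the first step,
\[
  \sum_{\substack{k \ge k_0+2\\k \text{ odd}}} \frac{c^{2(k+\ell)}}{\binom{2(k+\ell)}{k+\ell}}
  \;\le\;
  3 \sum_{\substack{k \ge k_0+2\\k \text{ odd}}} (k+\ell)\left(\frac{c}{2}\right)^{2(k+\ell)}
  \;=\; T^{\ell,k_0}(c).
\]

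The only non-routine ingredient is the numerical check $\sqrt{n\pi}\, e^{2/(15n)} \le 3n$ for $n \ge 3$, and even this is tight by a factor of nearly $3$; all remaining work is algebraic bookkeeping between the closed form of \cref{claim:series_closed_form} and the definition of $T^{\ell,k_0}(c)$. The slack in the Stirling bound further explains why the constant $1.5$ (rather than a sharper choice) suffices in the definition of $T^{\ell,k_0}(c)$.
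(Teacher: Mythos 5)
Your proof is correct and follows essentially the same route as the paper's: Stirling's lower bound on the middle binomial via \cref{cor:stirling}, absorbing the resulting $\sqrt{k+\ell}$ prefactor into a linear factor, applying \cref{claim:series_closed_form}, and matching the result to $T^{\ell,k_0}(c)$. The only cosmetic difference is that you package the prefactor check as a single inequality $\sqrt{n\pi}\,e^{2/(15n)} \le 3n$ for $n\ge 3$, whereas the paper first pulls out the constant $\sqrt{\pi}\,e^{2/(15(k_0+\ell+2))}$ and then separately uses $\sqrt{k+\ell}\le k+\ell$; both produce the same effective factor of $3 = 1.5\cdot 2$ hidden in the definition of $T^{\ell,k_0}(c)$.
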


\begin{proof}
  Indeed, we have
  \begin{align*}
    \sum_{\substack{k = k_0 + 2 \\ k\text{ odd}}}^{\infty} \frac{c^{2k+2\ell}}{\binom{2k+2\ell}{k+\ell}}
    & \le
    \sum_{\substack{k=k_0 + 2 \\ k\text{ odd}}}^{\infty}
    \frac{c^{2(k+\ell)}}{\frac{2^{2(k+\ell)}}{\sqrt{\pi(k+\ell)}} e^{-\frac{2}{15(k+\ell)}}}
    & &
    \text{(By \cref{cor:stirling}.)}
    \\
    & \le
    \sqrt{\pi} \cdot e^{\frac{2}{15(k_0+\ell+2)}}
    \sum_{\substack{k=k_0 + 2 \\ k\text{ odd}}}^{\infty} \sqrt{k+\ell} \left(\frac{c}{2}\right)^{2(k+\ell)}
    \\
    & \le
    \sqrt{\pi} \cdot e^{\frac{2}{15(k_0+\ell+2)}}
    \sum_{\substack{k=k_0 + 2 \\ k\text{ odd}}}^{\infty} (k+\ell) \left(\frac{c}{2}\right)^{2(k+\ell)}
    \\
    & =
    \frac{\sqrt{\pi}}{2} \cdot e^{\frac{2}{15(k_0+\ell+2)}}
    \sum_{i=(k_0 + 1)/2}^{\infty} (4i + 2\ell + 2)\left(\frac{c}{2}\right)^{4i+ 2 \ell + 2}
    & &
    \text{(Change $k$ to $2i+1$.)}
    \\
    & =
    \frac{\sqrt{\pi}}{2} \cdot e^{\frac{2}{15(k_0+\ell+2)}} \cdot
    \frac{%
      \left(2\left(k_0 + \ell \right) + 4 \right) \left(\frac{c}{2}\right)^{2(k_0 + \ell)+4}
      - 2(k_0 + \ell)\left(\frac{c}{2}\right)^{2(k_0+\ell)+8}
    }{%
      \left(1- \left(\frac{c}{2}\right)^4\right)^2
    }
    & &
    \text{(By \cref{claim:series_closed_form}.)}
    \\
    & \le
    T^{\ell,k_0}(c).
  \end{align*}
\end{proof}

We will need a bound on the ratio between a middle binomial and a defective middle binomial as follows.

\begin{claim}\label{claim:approx_binom_bound}
  If $d$ and $s \ge 2d$ are non-negative integers, then
  \begin{align*}
    \frac{\binom{2s}{s}}{\binom{2s-d}{s}} & \le 3^d.
  \end{align*}
\end{claim}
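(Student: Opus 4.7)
The plan is to reduce the inequality to a completely elementary bound on a product of $d$ fractions. First I would expand the ratio of binomial coefficients:
\begin{align*}
\frac{\binom{2s}{s}}{\binom{2s-d}{s}}
& =
\frac{(2s)!\,(s-d)!}{(2s-d)!\,s!}
=
\prod_{i=0}^{d-1} \frac{2s-i}{s-i}.
\end{align*}
So it suffices to show each factor $(2s-i)/(s-i)$ is at most $3$ under the hypothesis $s \ge 2d$.

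Next, I would write $(2s-i)/(s-i) = 1 + s/(s-i)$ and bound the second term. For $i \in \{0,\ldots,d-1\}$, the denominator satisfies $s - i \ge s - d + 1$. The hypothesis $s \ge 2d$ gives $s - d + 1 \ge d + 1 \ge s/2$ (the inequality $s/2 \le s - d + 1$ is equivalent to $s \ge 2d - 2$, which is weaker than $s \ge 2d$). Hence $s/(s-i) \le 2$ for every such $i$, and therefore $(2s-i)/(s-i) \le 3$.

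Combining these per-factor bounds yields
\begin{align*}
\frac{\binom{2s}{s}}{\binom{2s-d}{s}}
& =
\prod_{i=0}^{d-1} \frac{2s-i}{s-i}
\le
3^d,
\end{align*}
which is exactly the desired inequality. The argument is purely computational and has no real obstacle; the only point that deserves a brief check is the verification that $s - d + 1 \ge s/2$, which is where the assumption $s \ge 2d$ is used (and in fact the slightly weaker $s \ge 2d - 2$ would already suffice).
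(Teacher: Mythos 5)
Your proof is essentially the same as the paper's: expand the ratio as $\prod_{i=0}^{d-1}\frac{2s-i}{s-i}$ and show each factor is at most $3$. The paper writes each factor as $2 + \tfrac{i}{s-i}$ and bounds $\tfrac{i}{s-i}\le 1$, while you write it as $1 + \tfrac{s}{s-i}$ and bound $\tfrac{s}{s-i}\le 2$; both reduce to the same numerical fact ($2i\le s$ for $i\le d-1$), so the difference is cosmetic.

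One small slip in your write-up: the displayed chain $s - d + 1 \ge d + 1 \ge s/2$ is not quite right, because the middle inequality $d+1 \ge s/2$ can fail when $s$ is large relative to $d$ (e.g.\ $d=1$, $s=10$). Your parenthetical remark, however, gives the correct direct verification $s - d + 1 \ge s/2 \iff s \ge 2d-2$, so the conclusion $\tfrac{s}{s-i}\le 2$ does hold; you should simply drop the intermediate term $d+1$ from the chain and rely on the equivalence you already state.
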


\begin{proof}
  Since $s \ge 2d$, we get
  \begin{align*}
    \frac{\binom{2s}{s}}{\binom{2s-d}{s}}
    & =
    \frac{(2s)!(s-d)!}{s!(2s-d)!}
    =
    \frac{(2s)_d}{(s)_d}
    =
    \prod_{i=0}^{d-1} \frac{2s-i}{s-i}
    =
    \prod_{i=0}^{d-1} \left(2 + \frac{i}{s-i} \right) \le 3^d.\qedhere
  \end{align*}
\end{proof}

Now we are ready to prove the main result of this section, which we restate below for convenience.

\TailBoundingTail*

\begin{proof}
  For a positive odd integer $n$, let
  \begin{align*}
    T
    & \coloneqq
    \sum_{\substack{k = k_0 + 2 \\ k\text{ odd}}}^{\infty}
    \frac{c^{2k+2\ell}}{\binom{2k+2\ell}{k+\ell}}
    &
    T'_n
    & \coloneqq
    \sum_{\substack{k=\frac{n-3\ell-1}{2} + 1 \\ k\text{ odd}}}^{\frac{n-\ell-1}{2}}
    \frac{c^{n-1}}{\binom{n-\ell-1}{k+\ell}}.
  \end{align*}

  In the sum of $T_n'$, note that $(n-3\ell-1)/2 \le k \le (n-\ell-1)/2$ implies $\lvert (n - \ell - 1) -
  2(k+\ell) \rvert \le 2 \ell$ and $\lvert (n-1) -2(k+\ell)\rvert \le \ell$. By
  \cref{claim:approx_binom_bound} for $n \ge 9\ell + 1$, we have
  \begin{align*}
    T_n'
    & \le
    c^{\ell} \cdot 3^{2\ell} \sum_{\substack{k=\frac{n-3\ell-1}{2} + 1 \\ k\text{ odd}}}^{\frac{n-\ell-1}{2}}
    \frac{c^{2(k+\ell)}}{\binom{2(k+\ell)}{k+\ell}}.
  \end{align*}
  Observe that if $n \ge 2 k_0 + 3\ell +3$, the sum above is contained in the tail of $T$, which is a
  convergent series by \cref{claim:series_to_limit_tail}. Hence, $T_n' \xrightarrow[\text{odd}]{n \to \infty}
  0$. Therefore, we obtain
  \begin{align*}
    \lim_{\substack{n \to \infty\\\text{odd}}} T_n^{\ell,k_0}(c)
    & =
    \lim_{\substack{n \to \infty\\\text{odd}}} \sum_{\substack{k=k_0 + 2 \\ k\text{ odd}}}^{\frac{n-3\ell-1}{2}}
    \frac{c^{2k+2\ell}}{\binom{2k+2\ell}{k+\ell}}
    + \lim_{\substack{n \to \infty\\\text{odd}}}
    \sum_{\substack{k=\frac{n-3\ell-1}{2} + 1 \\ k\text{ odd}}}^{\frac{n-\ell-1}{2}}
    \frac{c^{n-1}}{\binom{n-\ell-1}{k+\ell}}\\
    & =  T  + \lim_{\substack{n \to \infty\\\text{odd}}} T'_n
    =  T  \le  T^{\ell,k_0}(c) ,
  \end{align*}
  where the last inequality follows from \cref{claim:series_to_limit_tail} again.
\end{proof}

\section{KLR Proofs}\label{app:klr_proofs}

For the reader's convenience we recall some proofs either from~\cite{lovett17} or implicit in it.

\KlrNonNegChar*

\begin{proof}
  We have
  \begin{align*}
    \chi^{\lambda}(\phi_{A})
    & =
    \frac{1}{\lvert A \rvert^2} \sum_{\pi,\pi' \in A} \chi^{\lambda}(\pi(\pi')^{-1})
    \\
    & =
    \frac{1}{\lvert A \rvert^2} \sum_{\pi,\pi' \in A}\tr(S^{\lambda}(\pi)S^{\lambda}(\pi')^{\top})
    =
    \tr(S^{\lambda}(\xi)S^{\lambda}( \xi)^{\top})
    \ge
    0,
  \end{align*}
  where $\xi \coloneqq \sum_{\pi \in A} \pi/\lvert A \rvert$.
\end{proof}

Using the pseudorandomness condition, we can bound the character of $M^{\mu}$ on $\phi_{A}$. In order to do
so, observe that the action of $S_n$ on $[n]_k$ corresponds precisely to the action of $S_n$ on the Young
module $M^{h^n_k}$ corresponding to the hook $h^n_k$ of leg $k$ (since the leg of a tabloid $[T]$ of shape
$h^n_k$ corresponds to a tuple in $[n]_k$ and the order of elements in the first row of $[T]$ is
arbitrary). In this case, we refer to tuples and tabloids interchangeably.

\KlrYoundModuleBound*

\begin{proof}
  Let $\mu \coloneqq h^n_k$. We explore the uniformity of the action of $S_n$ on $[n]_k$ given by the
  $(k,r)$-pseudorandomness assumption. Consider the matrix representation of $M^{\mu}$ indexed by
  $k$-tuples. More precisely, for $\pi \in S_n$ and $I,J \in [n]_k$, we have
  \begin{align*}
    M^{\mu}_{I,J}(\pi) & \coloneqq 1_{[\pi(J)=I]}.
  \end{align*}
  Set $\xi \coloneqq \sum_{\pi \in A} \pi/\lvert A \rvert$. Then $(k,r)$-pseudorandomness yields
  \begin{align*}
    M^{\mu}_{I,J}(\xi)
    & =
    \Pr_{\pi \in A}[\pi(J) = I]
    <
    \frac{r}{(n)_k}.
  \end{align*}
  Hence
  \begin{align*}
    \tr(M^{\mu}(\phi_{A}))
    & =
    \tr(M^{\mu}(\xi)M^{\mu}(\xi)^{\top})
    \\
    & =
    \sum_{I,J \in [n]_k}  M^{\mu}(\xi)_{I,J} \cdot M^{\mu}(\xi)_{I,J}
    \\
    & \le
    \sum_{I,J \in [n]_k} \frac{r}{(n)_k} \cdot M^{\mu}(\xi)_{I,J}
    =
    r,
  \end{align*}
  where the last equality follows from $M^{\mu}$ having exactly one entry of value $1$ and all others zero in
  each column.
\end{proof}

We can bound an arbitrary non-trivial character in terms of the pseudorandomness parameter and an appropriate
Kostka number.

\KlrIrrepCharBound*

\begin{proof}
    By the Young's rule, \cref{theo:young_decomp}, we obtain
    \begin{align*}
      \tr(M^{h^n_k}(\phi_A)) = \sum_{\lambda' \vdash n} K_{\lambda',h^n_k} \cdot \chi^{\lambda'}(\phi_A).
    \end{align*}
    Using the bound on $\tr(M^{h^n_k}(\phi_A))$ from \cref{claim:young_module_wp_bound} gives
    \begin{align*}
      \sum_{\lambda' \vdash n} K_{\lambda',h^n_k} \cdot \chi^{\lambda'}(\phi_A)
      & \le
      r.
    \end{align*}
    Since $\chi^{(1^n)}(\phi_A) = K_{(1^n),h^n_k} = 1$ and $\chi^{\lambda'}(\phi_A) \ge 0$ for every $\lambda'\vdash n$ from
    \cref{fact:non_negativity_of_irrep}, we have
    \begin{align*}
      1 + K_{\lambda,h^n_k} \cdot \chi^{\lambda}(\phi_A)
      & \le
      r,
    \end{align*}
    and the bound follows.
\end{proof}

\end{document}